\DeclareMathAlphabet{\pazocal}{OMS}{zplm}{m}{n}
\newcommand*\dotp{\mathpalette\dotp@{.5}}
\newcommand*\dotp@[2]{\mathbin{\vcenter{\hbox{\schttps://www.overleaf.com/project/5b8ed5c3505e540a5408998ealebox{#2}{$\m@th#1\bullet$}}}}} 
\newtheorem{theorem}{Theorem}[subsection]
\newtheorem{corollary}[theorem]{Corollary}
\newtheorem{lemma}[theorem]{Lemma}  
\newtheorem{proposition}[theorem]{Proposition}
\theoremstyle{definition}
\newtheorem{definition}[theorem]{Definition}
\newtheorem*{conjecture*}{Conjecture}
\newtheorem*{remark}{Remark}
\numberwithin{equation}{subsection}
\title{ T\MakeLowercase{he} C\MakeLowercase{how}-K\MakeLowercase{ontsevich dilogarithm } }
\author{S\MakeLowercase{inan} \"{U}\MakeLowercase{nver}}
\address{Ko\c{c} University, Mathematics Department. Rumelifeneri Yolu, 34450, Istanbul, Turkey}
\email{sunver@ku.edu.tr}
 \subjclass[2010]{19E15, 14C25}
\begin{document}
\maketitle
\noindent

\begin{abstract}
Based on a variant of the Kontsevich $1\frac{1}{2}$-logarithm function, we construct a regulator for a curve over the ring of dual numbers of a field of characteristic $p.$ This   also leads to an infinitesimal invariant of certain cycles in characteristic $p.$  
\end{abstract}

\section{Introduction}

The Shannon entropy function defined by 
$$
H(x):=-x
\log (x)-(1-x)  \log (1-x),
$$ 
for $0<x<1,$
satisfies the fundamental equation of information theory: 
\begin{align}\label{entropy-eq}
  H(x) + (1 - x)H (\frac{y}{1-x}) = H(y) + (1 - y)H (\frac{x}{1-y}).  
\end{align}
This same equation reappeared in the work of Cathelineau (\cite{cat-hom}, \cite{cat2011}) which gave an infinitesimal analog of Hilbert's third problem. If $k$ is a field of characteristic 0, let $\beta_{2}(k)$ be the vector space over $k$ generated by the symbols $\langle x \rangle ,$ for $x \in k^{\times} \setminus \{ 1 \}$ with relations generated by 
\begin{align*}
    \langle x \rangle -\langle y \rangle+(1-x)\langle \frac{y}{1-x}\rangle - (1-y)\langle  \frac{x}{1-y} \rangle
\end{align*}
when $x + y \neq 1.$ The map $D: \beta_{2}(k)\to k\otimes k^{\times}$ defined on the generators by
\begin{align*}
    D(\langle a \rangle):=a \otimes a +(1-a) \otimes (1-a)
\end{align*}
%$D(\langle a \rangle):=a \otimes a +(1-a) \otimes (1-a)$
is an infinitesimal analog of the Dehn invariant.  Cathelineau proves  that the  cokernel of $D$ is isomorphic to $\Omega^{1}_{k}$ \cite[Theor\`eme 1]{cat-hom}. The infinitesimal version of the scissors congruence group can be seen as the limit  of the hyperbolic scissors congruence group as it approached to the euclidean one (\cite{vol},\cite{euc}). The observation that in the limit the hyperbolic volume map should approach the euclidean volume map, led Bloch and Esnault to define an additive dilogarithm based on a $K$-theoretic complex in \cite{b-e}. A similar dilogarithm map was expected  on an infinitesimal scissors congruence group \cite{euc}.  Letting $k_m:=k[t]/(t^m),$ 
such a dilogarithm function  $\ell i_{2}$ (\textsection 2.2.1), on the Bloch group $B_{2}(k_m)$ (\textsection 2.1)  is defined in \cite{un-add}. Letting  $\delta: B_{2}(k_m)\to \Lambda ^{2}k_m ^{\times }$  be the Bloch complex (\textsection 2.1), it was shown that the infinitesimal part of   $ker(\delta)$ is isomorphic to the indecomposable part of $K_3(k_{m})/K_{3}(k),$  and the infinitesimal part of $coker(\delta)$ is isomorphic to $K_{2} ^{M}(k_{m})/K_{2} ^{M}(k),$ as expected.   
Moreover,  for $m=2,$ the Bloch complex includes Cathelineau's complex $\beta_{2}(k) \to k \otimes k^{\times}$ as a subcomplex.

 The indecomposable part of $K_{3}(k_{2})/K_{3}(k)$ is isomorphic to $k\oplus k,$ if  $k$ is a field of characteristic $p\geq 5,$ whereas it is isomorphic to $k,$ if $k$ is of  characteristic 0. This suggests that in characteristic $p,$ there might be  two independent  dilogarithm functions on $B_{2}(k_2).$  In \cite{kont}, Kontsevich defined the function  
$$
 \pounds_{1}(s):=\sum_{1 \leq i \leq p-1}\frac{s^{i}}{i},
$$
 which he called the $1\frac{1}{2}$-logarithm since it satisfies the four-term functional equation  (\ref{entropy-eq}).  
This function was modified in \cite{un-ao}, to define a characteristic $p$ dilogarithm  map $\ell i_{2} ^{(p)}$  (\textsection 3), from $B_{2}(k_2)$ to $k.$ This dilogarithm function $\ell i_{2} ^{(p)}$ together with   $\ell i_{2}$ was used to compute cohomology of the Bloch complex in characteristic $p$ in \cite{un-ao}.     The aim of the present work is  to define an infinitesimal Chow dilogarithm on a curve, which reduces to $\ell i_{2} ^{(p)}$ for the projective line. Next we describe the main theorem.

  %was used in \cite{un-ao} to prove that the Bloch complex in characteristic $p$  has the expected cohomology groups. In characteristic $p,$ the dilogarithm function $\ell i_{2} ^{(p)}$ is needed, in addition to the analog of $\ell i_{2},$ since the $K$-group $K_{3}(k_2),$ which is the kernel of the map from $B_{2}(k_2)$ to $\Lambda^2k_2 ^{\times},$ is larger in characteristic $p$ than in characteristic 0 and hence $\ell i_{2}$ does not induce an injective  map from  this group.

Assume that $k$ is a field of characteristic $p\geq 5$ and $C$ is a smooth and proper curve over $k_{2}.$ For each closed point $c$ of $C,$ we fix, once and for all, a closed subscheme $\mathfrak{c}$ of $C$ such that $\mathfrak{c}$ is smooth over $k_{2}$ and has support equal to $c.$ Let $\mathcal{P}$ denote the set of all these subschemes $\mathfrak{c},$ and let $\eta$ be the generic point of $C.$ For a point $x$ of $C$ let $\pazocal{O}_{C,x}$ denote the local ring of $C$ at $x.$ Let $k(C,\mathcal{P})^{\times}$ denote the subgroup of $\pazocal{O}_{C,\eta} ^{\times},$ which consist of  those $f$ in   $\pazocal{O}_{C,\eta} ^{\times}$ such that if  $c$ is a  closed point of $C$ and $\tilde{s} \in \pazocal{O}_{C,c}$ defines the closed subscheme $\mathfrak{c}$ then $f=u\tilde{s}^{n},$ for some $n \in  \mathbb{Z}$ and $u \in \pazocal{O}_{C,c} ^{\times}.$  

In \textsection \ref{section-mod-in-char},  we modify the construction in characteristic 0 in \cite{un-inf} to obtain the infinitesimal Chow-dilogarithm
$$
\rho: \Lambda^{3}k(C,\mathcal{P})^{\times} \to k.
$$
 However, the main construction of this paper is the map 
$$
\rho_{K}: \Lambda ^{3}k(C,\mathcal{P})^{\times} \to k
$$
which is based on the Kontsevich logarithm and is a purely characteristic $p$ object. This is stated as the main theorem: 

\begin{theorem}\label{theorem-main1}
Let $k$ be a field of characteristic $p \geq 5,$ and $C$ a smooth and projective curve over $k_2$ and $\mathcal{P}$ and $k(C,\mathcal{P})^{\times}$  as above. We have a regulator map: 
$$
\rho_{K}: \Lambda ^{3}k(C,\mathcal{P})^{\times} \to k,
$$
associated to the Kontsevich $1\frac{1}{2}$-logarithm $\pounds _1$, which canonically associates an element in $k$ to a triple of functions on $C.$  When  $C$ is the projective line $\mathbb{P}^{1} _{k_2}$ with coordinate function $z,$ then 
$$
\rho_{K}((z-\alpha)\wedge (z-\beta) \wedge (z-\gamma))=a^p\cdot\pounds _1(s),
$$
where $\frac{\gamma-\beta}{\alpha -\beta}=s+as(1-s)t,$
with $a \in k,$ $s \in k^{\times} \setminus \{ 1 \}.$ 
\end{theorem}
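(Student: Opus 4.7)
The plan is to transport the construction of the infinitesimal Chow dilogarithm $\rho$ from \textsection\ref{section-mod-in-char} (itself modeled on \cite{un-inf}), replacing each occurrence of the classical $\ell i_2$--type local building block by a $\pounds_1$--based building block coming from $\ell i_2^{(p)}$ of \cite{un-ao}. Concretely, for $f_1, f_2, f_3 \in k(C,\mathcal{P})^{\times}$ and a closed point $c$ of $C$ with its fixed lift $\mathfrak{c}$, I would expand each $f_i$ near $c$ as $u_i \tilde{s}^{n_i}$ with $u_i \in \pazocal{O}_{C,c}^{\times}$ and $\tilde{s}$ a defining equation of $\mathfrak{c}$. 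From these data one extracts, after reducing modulo $\tilde{s}$ and modulo $t$, a triple of constants in $k$ together with their $k_2$--infinitesimal perturbations; feeding this triple into the $\pounds_1$--variant of the local symbol and summing over $c \in C^{(1)}$ defines $\rho_K(f_1 \wedge f_2 \wedge f_3)$.

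The first body of checks is that the local symbol depends only on $f_1 \wedge f_2 \wedge f_3$ and that the global sum does not depend on the choice of the subschemes $\mathfrak{c} \in \mathcal{P}$. For the alternating/trilinearity one reduces to the four--term entropy identity (\ref{entropy-eq}) satisfied by $\pounds_1$, exactly as in the formal step that makes $\ell i_2^{(p)}$ well defined on $B_2(k_2)$ in \cite{un-ao}; this is where the Steinberg--type relations are forced on the symbol. For independence of $\mathfrak{c}$ one argues that replacing $\tilde{s}$ by another defining equation $v\tilde{s}$ with $v \in \pazocal{O}_{C,c}^{\times}$ changes each $u_i$ by a unit power of $v$, and the resulting change in the local symbol telescopes away after invoking the same entropy relation. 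Global well--definedness then follows from a reciprocity argument analogous to the one in \cite{un-inf}: the only way a residue--style sum of values of $\pounds_1$ can be canonical is that the obstruction is killed by the functional equation evaluated pointwise.

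For the explicit computation on $\mathbb{P}^1_{k_2}$ with functions $z-\alpha, z-\beta, z-\gamma$, the only closed points contributing are $\infty$ and the zero loci of the three factors. At each of the finite points exactly one of the $f_i$ has a zero and the other two evaluate to elements of $k_2^{\times}$, which gives a single $\pounds_1$--term in $k$; at $\infty$ the three functions all have simple poles with unit leading coefficients, and after extracting the infinitesimal parts one obtains another $\pounds_1$--term whose argument is the cross--ratio $\frac{\gamma-\beta}{\alpha-\beta} = s + as(1-s)t$. Collecting contributions, applying the four--term relation to collapse the finite--point terms, and isolating the coefficient of $t$ after reduction modulo $t^2$ yields the asserted value $a^p \pounds_1(s)$; the appearance of the Frobenius factor $a^p$ is forced by the fact that $\pounds_1$ is only a polynomial of degree $p-1$, so the infinitesimal variable $t$ enters the symbol precisely through the $p$--th--power rescaling built into $\ell i_2^{(p)}$.

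The main obstacle is the second paragraph: proving reciprocity and choice--independence at the level of local symbols, since $\pounds_1$ does \emph{not} satisfy the full five--term dilogarithm relation and only the restricted four--term relation (\ref{entropy-eq}) is available. One therefore has to organize the local symbol so that every nontrivial coboundary produced by a change of uniformizer, by a permutation of the $f_i$, or by moving between points, falls into the span of the entropy relation evaluated on compatible triples. This is the step where the passage from characteristic $0$ to characteristic $p$ is nontrivial and where careful bookkeeping of the $k_2$--expansions, as opposed to a direct transcription of the argument of \cite{un-inf}, is indispensable.
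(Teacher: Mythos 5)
There is a genuine gap, and it sits exactly where you flag the ``main obstacle'': your mechanism for well-definedness is not the one that can work, and the idea that actually carries the paper's proof is absent from your plan. The local symbol you propose cannot be evaluated from the data you extract. The functions live over $k_2$, so at a closed point $c$ the residue $res_{\mathfrak{c}}(f_1\wedge f_2\wedge f_3)$ lies in $\Lambda^2 k(\mathfrak{c})^{\times}\cong \Lambda^2 k(c)_2^{\times}$; but the $\pounds_1$-based local functional is $\ell^{(p)}=\tfrac12\sum_{1\leq i<p} i\,\ell_{p-i}\wedge\ell_i$, which needs the coefficients $\ell_i$ for all $i<p$, i.e.\ data modulo $t^{p}$, not modulo $t^{2}$. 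So one must \emph{lift} the curve and the functions to smooth algebras over $k_p$ (or $k_\infty$), the local symbol genuinely depends on the lift, and the entire content of the construction is the correction term that compensates for this dependence. That correction is the residue of the comparison $1$-form $\Omega^{(p)}$ of \textsection 4: the key identities are Lemma \ref{exact-lemma} and Proposition \ref{invariance-prop} (invariance of $res\,\Omega^{(p)}$ under reparametrization, which is what makes $res_c\,\omega^{(p)}$ well defined for a pair of abstract liftings), Proposition \ref{prop-res-add} and Corollary \ref{cor-res-form} (the difference of the $\ell^{(p)}$-values of the residues of two good liftings equals $res_{s=0}\Omega^{(p)}$ of their ``quotient''), the cocycle property of Lemma \ref{lemma-sum-residue}, and finally the global residue theorem (the sum over $|C|$ of the residues of a $1$-form vanishes), which kills the dependence on the generic lifting. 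None of this is replaceable by the four-term entropy relation: that relation is what makes $\ell i_2^{(p)}$ descend to $B_2(k_2)$ in \cite{un-ao}, but the well-definedness of $\rho_K$ in this paper never invokes it. Indeed, producing an element of $B_2$ from a triple of functions and then applying $\ell i_2^{(p)}$ is precisely the (conjectural) Goncharov-type factorization discussed in the introduction, which the paper deliberately avoids.

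Two smaller points. First, the independence you try to establish by ``telescoping'' over changes of uniformizer is not an issue the theorem addresses: $\mathcal{P}$ is fixed once and for all, and $(\pazocal{S},\mathfrak{c})^{\times}$ is already independent of the choice of uniformizer for a fixed $\mathfrak{c}$; the independence that must be proved is from the liftings to $k_p$/$k_\infty$. Second, your computation on $\mathbb{P}^1$ is organized differently from the paper's and is shakier: the paper first normalizes by a projective transformation to $(1-z)\wedge z\wedge(z-\tilde{s})$ with $\tilde{s}=s+as(1-s)t$, chooses the evident global lifting over $k_\infty$ so that all $\omega^{(p)}$-terms vanish, and then observes that the residues at $0,1,\infty$ are trivial (each is of the form $1\wedge(\ast)$ or vanishes), leaving only $res_{\tilde{s}}=(1-\tilde{s})\wedge\tilde{s}$ and hence $\ell i_2^{(p)}([\tilde{s}])=a^p\pounds_1(s)$. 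No four-term collapsing of finite-point contributions is needed, and in your un-normalized version it is not clear such a collapse is available.
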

\noindent 
We  call the function $\rho_K,$ the Chow-Kontsevich dilogarithm. The   regulators $\rho_{K}$ and $\rho$ are linearly independent. If $t$ denotes the variable in $k_{2},$   rescaling $t$ to $a\cdot t,$ results in multiplying $\rho$ with $a^{3}$ whereas $\rho_K$ is multiplied with $a^p.$ 

One can interpret the existence of  $\rho_K$ as part of a strong reciprocity law on curves. Let $X$ be a smooth and projective curve over a  field $k. $ Suslin proved that the sum of norms of the residue maps    
\[
\begin{CD}
K_{n} ^{M}(k(X)) @>{\oplus_{x \in |X|} \rm res}_x>> \oplus _{x \in |X|} K_{n-1} ^{M}(k(x)) @>{\oplus _{x \in |X|}N_{k(x)/k}}>> K_{n-1} ^M(k)
\end{CD}
\]
from the Milnor $K$-group of the function field of $X$ to those of the closed points of $X,$ is equal to 0 \cite{sus}. When $n=1,$ this states that the degree of the divisor of a rational function is 0. When $n=2,$ this is a restatement of Weil reciprocity. We are interested in the case when $n=3.$ Assuming that $k$ is algebraically closed, Goncharov conjectures in  \cite{arak} a stronger version of this reciprocity law: the sum of the residue maps from the motivic complex of $k(X)$  to that of  $k$ is homotopic to 0.  In particular, this  implies the existence of a map from $\Lambda ^{3}k(X)^{\times}$ to the Bloch group $B_{2}(k)$ (\textsection 2.1). Composing with any map from $B_{2}(k)$ to a group $A$ gives a map from $\Lambda ^{3}k(X)^{\times}$ to $A.$ 

The analog of Goncharov's conjecture in our main set-up  would give us a map $\Lambda ^{3} k(C,\mathcal{P})^{\times} \to B_{2}(k_2)$ (cf. \cite[\textsection 3.4]{un-inf}).   
Composing with the characteristic $p$ dilogarithm   $\ell i _{2} ^{(p)}:B_{2}(k_{2})\to k,$ would give a map from $\Lambda ^{3} k(C,\mathcal{P})^{\times}$ to $k.$ The Chow-Kontsevich dilogarithm  $\rho_{K}$ is expected to be this function.

We will also  use $\rho_K$ to define an infinitesimal invariant of cycles. Let $k_{\infty} :=k[[t]],$ and 
$\square ^n _{k_{\infty}}:=k_{\infty}\times _k (\mathbb{P}^{1} _{k} \setminus \{ 1\})^n.$ For $0\leq q,\, n,$ we define a certain subgroup $\underline{z}^{q} _{f} (k_{\infty},n)$ of  cycles of codimension $q$  in $\square ^n _{k_{\infty}}$ (\textsection 7.2.1), \cite[\textsection 4]{un-inf}.  For fixed $q,$ these groups form a complex. Based on the construction of the map in Theorem \ref{theorem-main1}, we define a regulator map $$
\rho_{K}:\underline{z}^{2} _{f} (k_{\infty},3) \to k,
$$
which vanishes on boundaries and has the property that if two cycles are congruent modulo $(t^2)$ then they have the same image under $\rho_{K}$ (Theorem \ref{modulus theorem}).

{\bf Outline.} In \textsection 2.1 and \textsection 2.2 , we  review the definitions of the Bloch group and the  Bloch complex, the additive dilogarithm of \cite{un-add} and the construction of the infinitesimal Chow dilogarithm  in the characteristic 0 case \cite{un-inf}. In \textsection 2.3, we describe the modifications that are needed in order to carry this construction to characteristic $p.$ In \textsection 3, we review the construction of $\ell i_{2} ^{(p)},$ the additive dilogarithm in characteristic $p,$ based on the Kontsevich $1\frac{1}{2}$-logarithm    and its relation to $K$-theory \cite{un-ao}. In \textsection 4, we describe the construction of the 1-form $\Omega^{(p)}$ which helps us control different local liftings of a curve. This section handles the split case. In \textsection 5, we prove that the residues of the 1-form $\Omega^{(p)}$ are independent of the parametrization: the main result is stated as Proposition \ref{invariance-prop}, the main step of the proof is Lemma \ref{exact-lemma}. In \textsection 5.5, we use this invariance of residues to define the residue of the 1-form $\Omega^{(p)}$ for a pair of  smooth algebras of dimension 1 over $k_p,$ which have isomorphic local rings at their generic points. Lemma \ref{lemma-indep-reduced} shows that this residue depends only on the reduction of this isomorphism to the reduced closed subscheme. In \textsection 6, we relate the additive dilogarithm $\ell i_{2} ^{(p)}$ to the residue of $\Omega^{(p)}$ in Proposition \ref{prop-res-add}. In \textsection 7.1, we prove Theorem \ref{theorem-main1}, which constructs the Chow-Kontsevich dilogarithm and use this construction in \textsection 7.2 to  define an infinitesimal invariant of   cycles in $\underline{z}^{2} _{f} (k_{\infty},3).$

{\bf Notation.} Let $A$ be a ring and $I$ be an ideal of $A.$ If $a \in A,$ we let $a|_{I} \in A/I$ denote the reduction of $a$ modulo $I.$ If $p=(a,b,c) \in A^{ 3}$ is a triple of elements in $A,$  we write $p|_{I}$ for $(a|_{I}, b|_{I},c|_{I}) \in (A/I)^{ 3}.$ If the ideal is $I=(t^m), $ we also write $a|_{t^m}$ instead of $a|_{(t^m)}$.   If $\alpha :X\to Y$ is any function and $p=(a,b,c) \in X\times X \times X,$ we abuse the notation and write $\alpha (p)$ for $(\alpha(a),\alpha(b),\alpha(c)).$ 

For a ring $R,$ we let $R_{\infty}:=R[[t]]$ be the formal power series ring over $R$  and $R_{m}:=R[t]/(t^{m}),$ the truncated polynomial ring over $R$ of modulus $m.$ If $R$ is a $\mathbb{Q}$-algebra then the exponential map is defined as  $e^{\alpha}:=\sum _{0\leq n} \frac{\alpha ^n}{n!}$ for $\alpha \in (t)\subseteq R_{\infty}.$ The same formula defines a map for $\alpha \in (t)\subseteq R_{m}.$ On the other hand, if $R$ is a ring of characteristic $p,$ even though there is no such an exponential map on $R_{\infty},$ we define the modified version $\underline{e}^{\alpha}:=\sum _{0\leq n<p}\frac{\alpha^n}{n!},$ for $\alpha \in (t) \subseteq R_{\infty}.$ This gives a map from $tR_{\infty}$ to $R_{\infty}.$ We denote the induced map from  $tR_{p}$ to $R_{p}$ with the same symbol.

\section{The infinitesimal Chow dilogarithm}

\subsection{Definitions of the complexes}

If $R$ is any ring,  we let $R^{\flat}:=\{r \in R|r(1-r) \in R^{\times} \}.$    The Bloch group $B_{2}(R)$ is defined to be the quotient of $\mathbb{Z}[R^{\flat}]$ by  the subgroup generated by 
\begin{align}\label{5term}
 [x]-[y]+[y/x]-[(1-x^{-1})/(1-y^{-1})]+[(1-x)/(1-y)],    
\end{align}
 for all $x,y \in R^{\flat} $ with $x-y \in R^{\times}.$
Let 
$
\delta:B_{2}(R) \to \Lambda^2 R^{\times} 
$ 
denote the map which is defined on the generators by    $\delta([x]):=(1-x)\wedge x.$ The  complex obtained by putting $B_{2}(R)$ in degree 1 and $\Lambda^2 R^{\times}$ in degree 2 is called {\it the Bloch complex of weight 2}:
\[
\begin{CD}
 B_{2}(R) @>{\delta}>> \Lambda^2 R^{\times}.
\end{CD}
\]

\subsection{The  characteristic 0 case} 
In this section, we review the theory in characteristic 0. We refer the reader to \cite{un-add} for  the details on the additive dilogarithm and to \cite{un-inf} and \cite{un-higher}, for the details on the infinitesimal Chow dilogarithm. 

For a  ring $R,$ let 
 $R_{\infty}:=R[[t]]$ denote the ring of  formal power series over $R.$   If  $R$ is a $\mathbb{Q}$-algebra, let   $\log: (1+tR_{\infty})^{\times } \to R_{\infty}$ denote the logarithm  given by
 $\log (1+z):=\sum _{1\leq n} (-1)^{n+1}\frac{z^n}{n},$ for $z \in tR_{\infty}.$ Let $\log ^{\circ}: R_{\infty} ^{\times} \to R_{\infty},$ be the branch  of the logarithm associated to  the  splitting of $R_{\infty} \twoheadrightarrow R$ corresponding to the inclusion $R \hookrightarrow R_{\infty}.$ In other words, $\log ^{\circ}$ is   defined as $\log ^{\circ}(\alpha):=\log (\frac{\alpha}{\alpha(0)}),$ for $\alpha \in R_{\infty} ^{\times}.$ For $q=\sum _{0\leq i} q_i t^i \in R_{\infty},$  we  define   $t_i(q):=q_i.$ For $\alpha \in R_{\infty} ^{\times},$  we let $\ell_i(\alpha):=t_i(\log^{\circ}(\alpha)).$

\subsubsection{Additive dilogarithm } If $R$ is a $\mathbb{Q}$-algebra, and $m\geq 2,$ we defined maps 
$$
\ell i_{m,r}: B_{2}(R_m) \to R,
$$
for each $m<r<2m.$  In this paper, we will only use this map for $m=2$ and $r=3$ and with the notation  $\ell i_{2}$ instead of $\ell i _{2,3}. $ Hence $\ell i_{2}$ defines a map 
$$
\ell i_{2}: B_{2}(R_{2}) \to R.
$$
Explicitly, this map is given by 
\begin{align}\label{char0form1}
\ell i_{2}([s+at])=-\frac{a^3}{2s^2(1-s)^2}.    
\end{align}
This can also be described using the map $\delta: B_{2}(R_{\infty}) \to \Lambda ^{2}R_{\infty} ^{\times}$ as follows. If we let 
$$
\ell_{2}\wedge \ell _1: \Lambda ^{2}R_{\infty} ^{\times}\to R
$$
be given by $(\ell_{2}\wedge \ell _1)(a\wedge b):=\ell_2(a)\ell_1(b)-\ell_2(b)\ell_1(a)$ then 
the map $(\ell_{2}\wedge \ell _1 ) \circ \delta$ factors through the canonical surjection $B_{2}(R_{\infty}) \to B_{2}(R_{2}):$

 \[
\begin{CD}
 B_{2}(R_{\infty}) @>{\delta}>> \Lambda^2 R_{\infty}^{\times}\\
 @VVV   @VV{\ell_{2}\wedge \ell _1}V \\
  B_{2}(R_{2}) @>{\ell i_{2}}>> R,
\end{CD}
\]
 and, abusing the notation, we write 

 \begin{align}\label{char0form2}
 \ell i_{2}= (\ell_{2}\wedge \ell _1 ) \circ \delta.   
 \end{align}
In the following, we will also use the notation  $\ell:=\ell_2 \wedge \ell _1$  and hence write 
 \begin{align}\label{char0form2'}
 \ell i_{2}= \ell \circ \delta.   
 \end{align}

\subsubsection{Infinitesimal Chow dilogarithm}\label{subsubsection-inf-chow}
In this section, we  review the infinitesimal Chow dilogarithm  when the base field $k$ is of characteristic 0. We refer the reader to  \cite{un-inf} and \cite{un-higher} for the details.

\begin{definition} Let $\pazocal{S}$ be a smooth algebra of relative dimension 1 over $k_m,$ with $m \in \mathbb{N} \cup \{ \infty\},$  such that the reduction $\underline{\pazocal{S}}$ of $\pazocal{S}$ modulo $(t)$ is a discrete valuation ring.  Let $c$ be the closed point of ${\rm Spec} (\pazocal{S}).$ We say that a closed subscheme $\mathfrak{c}$ of ${\rm Spec} (\pazocal{S})$ is a {\it smooth lifting} of $c,$ if $\mathfrak{c}$  is smooth over $k_m$ and is supported on  $c.$ If $\tilde{s} $ is an element of $\pazocal{S}$ such that its reduction in $\underline{\pazocal{S}}$ is a uniformizer, we also call $\tilde{s}$ a {\it uniformizer}.  If $\eta$ is the generic point of ${\rm Spec}(\pazocal{S})$ and $\pazocal{S}_{\eta}$ is its local ring  at $\eta,$ then we let 
$$
(\pazocal{S},\tilde{s})^{\times}:=\{\alpha \in \pazocal{S} _{\eta} ^{\times}| \alpha=u\tilde{s}^n,\; {\rm for\; some} \;\; u \in \pazocal{S}^{\times}\; {\rm and} \; n \in \mathbb{Z}  \}.
$$
If the closed subscheme on $\pazocal{S}$ defined by $\tilde{s}$ is $\mathfrak{c},$ then we say that $\tilde{s}$ is a {\it uniformizer for} $\mathfrak{c}.$ In this case, we also write $( \pazocal{S},\mathfrak{c})^{\times}:=(\pazocal{S},\tilde{s})^{\times}.$ \end{definition}
\noindent Note that since any two uniformizers for  $\mathfrak{c}$ differ by multiplication by an element in $\pazocal{S} ^{\times},$ the definition of $(\pazocal{S},\mathfrak{c})^{\times}$ is independent of the choice of the uniformizer. We say that an element $\alpha \in \pazocal{S}_{\eta}$ is {\it good with respect to } $\mathfrak{c}$ or equivalently is $\mathfrak{c}${\it -good} if $\alpha \in (\pazocal{S},\mathfrak{c})^{\times}.$ If $C$ is a smooth curve over $k_m$ and $c$ is a closed point of $C,$ the analogous notions are defined by taking $\pazocal{S}$ as the local ring of $C$ at $c.$

\iffalse
\begin{definition}
Suppose that $\pazocal{S}$ and $\mathfrak{c}$  are as above.  We write 
$$
(\pazocal{S},\mathfrak{c})^{\flat}:=\{f \in (\pazocal{S},\mathfrak{c})^{\times}|1-f \in (\pazocal{S},\mathfrak{c})^{\times}  \}.
$$  We define $B_{2}(\pazocal{S},\mathfrak{c})$ to be the quotient of $\mathbb{Z}[(\pazocal{S},\mathfrak{c})^{\flat}]$   modulo the five term relations (\ref{5term}) associated to pairs $x$ and $y$ in $(\pazocal{S},\mathfrak{c})^{\flat}$ with the property that $x-y\in (\pazocal{S},\mathfrak{c})^{\times}.$
\end{definition}
\fi

Let  $\pazocal{R}$ be any smooth algebra of relative dimension 1 over $k_{m},$ whose reduction $\underline{\pazocal{R}}$ is not necessarily a discrete valuation ring, and $c$ be a closed point of ${\rm Spec} (\pazocal{R}).$ We define a  smooth lifting  of $c$ to ${\rm Spec} (\pazocal{R})$  as a smooth lifting of $c$ to ${\rm Spec} (\pazocal{R}_{c}).$

\begin{definition}
Suppose that  $\pazocal{R}$ is a  smooth algebra  of relative dimension 1 over $k_m,$ with $m \in \mathbb{N}\cup \{ \infty\}.$   
Fix  a smooth lifting $\mathfrak{c}$  of every closed point $c$ of ${\rm Spec} (\pazocal{R})$ and denote  the set of these liftings by $\mathcal{P}.$   We let $$(\pazocal{R},\mathcal{P})^{\times}:=\bigcap_{c \in |{\rm Spec}(\pazocal{R})| }(\pazocal{R}_{c },\mathfrak{c})^{\times}$$
and 
$(\pazocal{R},\mathcal{P})^{\flat}:=\{f \in (\pazocal{R},\mathcal{P})^{\times}|1-f \in (\pazocal{R},\mathcal{P})^{\times}  \}.$  We define $B_{2}(\pazocal{R},\mathcal{P})$ to be the quotient of  $\mathbb{Z}[(\pazocal{R},\mathcal{P})^{\flat}]$   by the group generated by the five term relations (\ref{5term}) associated to pairs $x$ and $y$ in $(\pazocal{R},\mathcal{P})^{\flat}$ with the property that $x-y\in (\pazocal{R},\mathcal{P})^{\times}.$  
\end{definition}

Let  $k(c)$ denote the residue field of $c$  and  $k(\mathfrak{c})$ denote the  ring of regular functions on the affine scheme $\mathfrak{c}.$    We have  a map 
$$\delta:B_{2}(\pazocal{R},\mathcal{P}) \to \Lambda ^2(\pazocal{R},\mathcal{P})^{\times} ,$$ 
as above, whose value on the generator $[x]$ is given by $\delta([x])=(1-x)\wedge x.$ This gives us a complex which depends on the set of liftings $\mathcal{P},$ since each term in the complex does. However, we expect  the cohomology of the complex to be independent of $\mathcal{P}.$ The above $\delta$   induces a map  
$$
\delta:B_{2}(\pazocal{R},\mathcal{P})\otimes(\pazocal{R},\mathcal{P})^{\times}\to \Lambda ^3(\pazocal{R},\mathcal{P})^{\times},$$
which sends $[x] \otimes y $ to $\delta([x])\wedge y.$ Abusing the notation, we denote this map also with the symbol $\delta. $ For each $\mathfrak{c},$ we have residue maps 
\begin{align}\label{res-lam}
    res_{\mathfrak{c}}: \Lambda ^3(\pazocal{R},\mathcal{P})^{\times} \to \Lambda ^{2}k(\mathfrak{c})^{\times},
\end{align}

\begin{align}\label{res-bloch}
res_{\mathfrak{c}}:B_{2}(\pazocal{R},\mathcal{P}) \otimes (\pazocal{R},\mathcal{P})^{\times}\to B_{2}(k(\mathfrak{c})), 
\end{align}
which give a commutative diagram:
$$
\xymatrix{
B_{2}(\pazocal{R},\mathcal{P}) \otimes (\pazocal{R},\mathcal{P})^{\times}\ar^{res_{\mathfrak{c}}}[d] \ar^{\;\;\;\;\;\;\;\delta}[r] & \Lambda ^3(\pazocal{R},\mathcal{P})^{\times} \ar^{res_{\mathfrak{c}}}[d] \\
B_{2}(k(\mathfrak{c})) \ar^{\delta}[r] & \Lambda ^{2}k(\mathfrak{c})^{\times}.
 }
$$
Let us recall these residue maps, which were defined in the classical case in \cite[\textsection 1.14]{geom}. 

The map (\ref{res-lam}) was defined in \cite[\textsection 7]{un-higher} by the following property:  if $\tilde{s}$ denote a uniformizer for $\mathfrak{c}$ and $\alpha _{i} \in  \pazocal{R}_{c }^{\times},$ for $1\leq i \leq 3,$ and $n \in \mathbb{Z}$ then   $res_{\mathfrak{c}}$ maps the term   $(\tilde{s}^n \alpha _{1}) \wedge \alpha _2 \wedge \alpha _{3}$ to $n \cdot \underline{\alpha}_{2} \wedge \underline{\alpha}_{3}.$ Here, if $\alpha$ is in $\pazocal{R}_{c } ^{\times},$     $\underline{\alpha}$ denotes its image  in  $(\pazocal{R}_{c }/(\tilde{s}))^{\times}=k(\mathfrak{c})^{\times}.$ The residue map is well-defined and is independent of the choice of the uniformizer $\tilde{s}.$ 

The map (\ref{res-bloch}) was defined in \cite[\textsection 8.1]{un-higher} (cf. \cite[\textsection 3.3.1]{un-inf}) by the following properties: $res_{\mathfrak{c}}$ sends an element of the form $[\alpha] \otimes \beta$ to 0 if $\alpha \not\in \pazocal{R}_{c } ^{\flat}$ or $\beta \in \pazocal{R}_{c } ^{\times};$ and sends $[\alpha] \otimes \tilde{s}$ to $[\underline{\alpha}],$ if  $\alpha \in \pazocal{R}_{c } ^{\flat}.$ Again the map is independent of the choice of the uniformizer and is well-defined, cf. \cite[\textsection 1.14]{geom}.

Suppose that $C$ is a smooth and projective curve over $k_m$ and  $|C|,$  the set of its closed points and let $\eta$ be its generic point. For each $c \in |C|,$ fix a smooth lifting $\mathfrak{c}$ of $c$ to $C$  and let $\mathcal{P}$ denote the set of all of these liftings. For an open subset $U$ of $C,$ let $\mathcal{P}|_{U}$ denote the set of those $\mathfrak{c} \in \mathcal{P}$ such that $\mathfrak{c} \subseteq U.$ 

Let  $(\pazocal{O}_{C}, \underline{\mathcal{P}})^{\times}$ (resp. $(\pazocal{O}_{C}, \underline{\mathcal{P}})^{\flat}$) be the sheaf on $C$ such that 
$$
(\pazocal{O}_{C}, \underline{\mathcal{P}})^{\times}(U)=(\pazocal{O}_{C}(U), \mathcal{P}|_{U})^{\times}\;\; {\rm  (resp.}\;\;
(\pazocal{O}_{C}, \underline{\mathcal{P}})^{\flat}(U)=(\pazocal{O}_{C}(U), \mathcal{P}|_{U})^{\flat}
) $$
for each affine open subset $U$ of $C.$ Similarly, let $B_{2}(\pazocal{O}_{C},\underline{\mathcal{P}})$ be the sheaf associated to the presheaf whose sections on such $U$ are $B_{2}(\pazocal{O}_{C}(U),\mathcal{P}|_{U}).$

This gives us a complex $\mathcal{C}(C,\mathcal{P})$ of sheaves on $C$ which are concentrated in degrees 2 and 3: 
\begin{align}\label{main-complexofsheaves-inf}
 B_{2}(\pazocal{O}_C, \underline{\mathcal{P}}) \otimes (\pazocal{O}_{C},\underline{\mathcal{P}}) ^{\times}\to \oplus _{c \in |C|}i_{c*}(B_{2}(k(\mathfrak{c}))) \oplus \Lambda ^3 (\pazocal{O}_{C},\underline{\mathcal{P}})^{\times},   \end{align}
where $i_{c}:c \to C$ is the inclusion map. Let  $k(C)^{\times}:=\pazocal{O}_{C,\eta} ^{\times}$ denote group of units of the local ring of $C$ at its generic point $\eta$ and  $k(C,\mathcal{P})^{\times} := \Gamma(C,(\pazocal{O}_{C},\underline{\mathcal{P}})^{\times}) \subseteq k(C)^{\times}$ denote those which are also $\mathcal{P}$-good. 

When $m=2,$ in other words when $C/k_{2}$ is a smooth and projective curve, the infinitesimal Chow dilogarithm $\rho$ is a map
$
\rho: \Lambda^{3} k(C,\mathcal{P} )^ \times \to k.
 $

Suppose that  $\pazocal{A}/k_{\infty}$ is a smooth  algebra over $k_{\infty}$ of relative  dimension 1, and $\mathcal{P}$ is a set of smooth liftings of closed points of ${\rm Spec} (\pazocal{A})$ as above. For  $\mathfrak{c} \in \mathcal{P},$ recall that $k(\mathfrak{c})$ denotes the ring of functions on the affine scheme $\mathfrak{c}.$ If $k'$ denotes the residue field of $\mathfrak{c},$ there is a unique isomorphism of $k_{\infty}$-algebras between $k(\mathfrak{c})$ and $k'_{\infty}$ which is the identity map modulo $(t).$ Therefore we can identify $k(\mathfrak{c})$ and $k'_{\infty}.$ In particular, this gives us a canonical map $\ell: \Lambda ^{2}k(\mathfrak{c})^{\times} \to k',$ which corresponds to $\ell =\ell_{2} \wedge \ell _1 : \Lambda ^{2}(k'_{\infty} )^{\times} \to k'$ via the above identification $k(\mathfrak{c})=k'_{\infty}.$

If $\tilde{f}, $ $\tilde{g},$  $\tilde{h} \in (\pazocal{A},\mathcal{P})^{\times}$ and $\mathfrak{c} \in \mathcal{P}$ then 
$
res_{\mathfrak{c}} (\tilde{f} \wedge \tilde{g} \wedge \tilde{h}) \in \Lambda ^2 k(\mathfrak{c})^{\times}.
 $
Applying $\ell$ to this object we obtain an element in $k'$ and taking its trace  to $k,$ we obtain 
$$
{\rm Tr}_k(\ell (res_{\mathfrak{c}} (\tilde{f} \wedge \tilde{g} \wedge \tilde{h}))) \in k.
$$
This term will be essential in defining the local contribution to the Chow dilogarithm. 

Suppose that    $C/k_{2}$ has a global lifting $\tilde{C}/k_{\infty}.$ Namely, $\tilde{C}$ is   a smooth and projective curve over $k_{\infty}$ together with a fixed isomorphism between $\tilde{C} \times_{k_{\infty}}k_{2}$ and $C.$ Let $\tilde{\mathcal{P}}$ be a set of smooth liftings for each point of $c \in |C|$ to $\tilde{C}$ which reduce to the liftings in $\mathcal{P}$ modulo $(t^2).$ Suppose further that $\tilde{f},$ $\tilde{g}$,  $\tilde{h} \in k(\tilde{C},\tilde{\mathcal{P}})^{\times}$ reduce to $f,$ $g$ and $h$ modulo $(t^2).$ 
Then  
\begin{eqnarray}\label{exp1-rho}
\rho(f\wedge g\wedge h)=\sum _{c \in |C|} {\rm Tr}_k(\ell(res_{\tilde{c}} (\tilde{f} \wedge \tilde{g} \wedge \tilde{h}))),
\end{eqnarray}
where $\tilde{c} \in \tilde{\mathcal{P}}$ denotes the lifting of $c.$ 
 
In general,  such global liftings  do not exist. Even when they exist, we do not know, a priori,  that (\ref{exp1-rho}) is independent of the choice of the liftings. 
Our method to define $\rho$ in general, is to choose local   liftings of the curve  and  the  functions and also to choose a generic lifting of the curve and the functions 
and then to measure the defects between the local liftings and the generic lifting. This defect between the different liftings will be measured by the residue of a 1-form. Next we  describe this in detail.

We start with the following data. Let $\tilde{\pazocal{R}}$ and  $\hat{\pazocal{R}}$ are smooth $k_{\infty}$-algebras  of relative dimension one, and $\chi$ an isomorphism $\chi: \tilde{\pazocal{R}}/(t^2) \xrightarrow{\sim} \hat{\pazocal{R}}/(t^2)$ of $k_{2}$-algebras. Suppose further that we a have a triple $\tilde{p}:=(\tilde{f},\tilde{g},\tilde{h})$ of functions in $\tilde{\pazocal{R}} ^{\times}$ and a triple $\hat{p}:=(\hat{f},\hat{g},\hat{h})$ of functions in $\hat{\pazocal{R}} ^{\times},$ such that $\chi (\tilde{p}|_{t^2})=\hat{p}|_{t^2}.$    To this data, we will attach an element 
 $\omega(\tilde{p},\hat{p},\chi) \in \Omega^{1}_{\underline{\hat{\pazocal{R}}}/k},$ where $\underline{\hat{\pazocal{R}}}$ is 
 the reduction modulo $(t)$ of $\hat{\pazocal{R}}.$ 
 
 We will proceed as follows. 
  Let $\overline{\chi}:\tilde{\pazocal{R}} \xrightarrow{\sim} \hat{\pazocal{R}}$ be any $k_{\infty}$-algebra isomorphism which reduces to $\chi$ modulo $(t^2)$  and $\varphi: \underline{\hat{\pazocal{R}}} \to \hat{\pazocal{R}}$ be any splitting of the canonical projection.   Denote by $\overline{\varphi}$ the corresponding isomorphism $\underline{\hat{\pazocal{R}}}[[t]] \xrightarrow{\sim} \hat{\pazocal{R}}$ of $k_{\infty}$-algebras,  which extend $\varphi.$ 
Then we define: 
$$
\omega(\tilde{p},\hat{p},\chi):= \Omega( \overline{\varphi}^{-1}( \overline{\chi}(\tilde{p})), \overline{\varphi}^{-1}(\hat{p}) ),
$$
with $\Omega$ as below. 

Let $\tilde{q}=(\tilde{y}_{1},\tilde{y}_{2},\tilde{y_{3}})$ and $\hat{q}=(\hat{y}_{1},\hat{y}_{2},\hat{y_{3}}),$ with $\tilde{y}_i, \, \hat{y}_i \in \underline{\hat{\pazocal{R}}}[[t]] ^{\times},$ and $\hat{y}_{i}-\tilde{y}_i \in (t^2),$ for all $1 \leq i \leq 3.$ There are unique $\alpha_{0i} \in \underline{\hat{\pazocal{R}}}\, ^{\times}$ and $\alpha _{1i}, \hat{\alpha}_{ji},\tilde{\alpha}_{ji} \in \underline{\hat{\pazocal{R}}} ,$ for $1\leq i \leq 3$ and $2\leq j$ such that 
$$
\hat{y}_i=\alpha_{0i}e^{t\alpha_{1i}+t^2\hat{\alpha}_{2i}+\cdots }
$$
and
$$
\tilde{y}_i=\alpha_{0i}e^{t\alpha_{1i}+t^2\tilde{\alpha}_{2i}+\cdots }.
$$
We then define 
\begin{eqnarray*}
\Omega (\tilde{q},\hat{q}) := \sum _{\sigma \in S_{3}} (-1) ^\sigma \alpha_{1 \sigma(1)} ( \tilde{\alpha}_{2\sigma(3)}-\hat{\alpha}_{2\sigma(3)} ) \cdot d \log (\alpha_{0\sigma(2)}) \in \Omega^{1} _{\underline{\hat{\pazocal{R}}}/k}.
\end{eqnarray*}
It turns out that the definition of $\omega(\tilde{p},\hat{p},\chi)$ is   independent of the choices of the lifting $\overline{\chi}$ and the splitting $\varphi.$ It does depend on the liftings of the triples of functions and on $\chi$, as  reflected in the notation.

Suppose that $C/k_{2}$ and $\mathcal{P}$ are as above and $f, g, h$ are in $k(C,\mathcal{P})^{\times}.$ We will describe the definition of $\rho (f \wedge g \wedge h) \in k$ below. 

First let $p:=(f,g,h)$ and let  $\eta$ be the generic point on $C.$ Let  $\tilde{\pazocal{A}}$ be a smooth $k_{\infty}$-algebra together with  an isomorphism 
$
\alpha: \tilde{\pazocal{A}}/(t^2)\xrightarrow{\sim} \pazocal{O}_{C,\eta}.
 $
 Let $\tilde{p}_{\eta}$ be a triple of functions in $\tilde{\pazocal{A}},$ whose reductions modulo $(t^2)$ map to the germs $p_{\eta}$ of the functions $p$ at $\eta.$ For each $c\in |C| ,$ let $\widetilde{\pazocal{B}}^{\circ}_{c}$ be  a smooth $k_{\infty}$-algebra together with an isomorphism $
\tilde{\gamma}_{c}: \widetilde{\pazocal{B}}^{\circ}_{c}/(t^2) \xrightarrow{\sim}  \hat{\pazocal{O}}_{C,c},
$ 
from the reduction of $\widetilde{\pazocal{B}}^{\circ}_{c}$ modulo $(t^2)$ to the completion of the local ring of $C$ at $c.$ Let $\mathcal{P}_{c}=\{ \mathfrak{c}\}$ denote the smooth lifting of the point $c$ to $C.$ Let $\mathcal{Q}_{c}:=\tilde{\gamma}_{c} ^{*}(\mathcal{P}_{c})$ denote the corresponding smooth lifting on ${\rm Spec}(\widetilde{\pazocal{B}}^{\circ}_{c})$ obtained by transport of structure via the isomorphism $\tilde{\gamma}_{c}.$ If $p_{c}$ denotes the triple of functions obtained by taking the germs of the functions in $p$ at the point $c$ then $p_c \in ((\hat{\pazocal{O}}_{C,c},\mathcal{P}_{c})^{\times} )^{3}.$ With the notation above, we have $q_c:=\tilde{\gamma}_{c} ^{-1}(p_{c}) \in ((\widetilde{\pazocal{B}}^{\circ}_{c}/(t^2),\mathcal{Q}_{c})^{\times })^3.$ Let $\tilde{\mathcal{Q}_{c}}$ be a smooth lifting  to ${\rm Spec}(\widetilde{\pazocal{B}}^{\circ}_{c})$ which reduces to $\mathcal{Q}_{c}$ modulo $(t^2).$ 
Let $\tilde{q}_{c} \in ((\widetilde{\pazocal{B}}^{\circ}_{c},\tilde{\mathcal{Q}}_{c})^{\times })^3
$ which reduces to $q_{c}$ modulo $(t^{2}).$ 
If $\tilde{q}_{c}=(\alpha, \beta, \gamma),$ and $\tilde{\mathcal{Q}}_{c}=\{ \tilde{\mathfrak{c}}\}$ then $res_{\tilde{\mathfrak{c}}}(\alpha \wedge \beta \wedge \gamma) \in \Lambda ^{2}k(\tilde{\mathfrak{c}})^{\times}.$ We denote the last expression $res_{\tilde{\mathfrak{c}}}(\alpha \wedge \beta \wedge \gamma)$ by $res_{c}(\tilde{q}_c)$ in order to simplify the notation.

If we let $\tilde{\gamma}_{c,\eta} ^{-1} \circ \alpha_c$ denote the isomorphism
obtained by composing the  completion of  $\alpha$ at $c$ and the localization of the inverse of  $\tilde{\gamma}_{c}$ at the generic point then the value of $\rho$ on $p$ is given by:

\begin{eqnarray}\label{ord-chow-dilog-eq}
\rho(p):= \sum_{c\in |C|}{\rm Tr}_k(\ell(res_c(\tilde{q}_{c}) )+ res_{c} \omega(\tilde{p}_{\eta},\tilde{q}_{c} ,  \tilde{\gamma}_{c,\eta} ^{-1} \circ \alpha_c )).
\end{eqnarray}
Here the term $ \omega(\tilde{p}_{\eta},\tilde{q}_{c} ,  \tilde{\gamma}_{c,\eta} ^{-1} \circ \alpha_c )$ should be interpreted in the following sense. The map $ \tilde{\gamma}_{c,\eta} ^{-1} \circ \alpha_c $ is an isomorphism form  the completion $\hat{\tilde{\pazocal{A}}}_{c}/(t^2)$ of $\tilde{\pazocal{A}}/(t^2)$ at $c$ to the localization $\widetilde{\pazocal{B}}^{\circ}_{c,\eta}/(t^2)$
 of $\widetilde{\pazocal{B}}^{\circ}_{c}/(t^2)$ at the generic point $\eta. $ The images of the triples of functions $\tilde{p} _{\eta}$ and $\tilde{q}_{c}$ in the completions and localisations give liftings to $\hat{\tilde{A}}_{c}$ and to $\widetilde{\pazocal{B}}^{\circ}_{c,\eta}.$ This is precisely the set-up in which we can use $\omega$ to obtain a 1-form $\omega(\tilde{p}_{\eta},\tilde{q}_{c} ,  \tilde{\gamma}_{c,\eta} ^{-1} \circ \alpha_c )$ on $\widetilde{\pazocal{B}}^{\circ}_{c,\eta}/(t).$ The residue of this 1-form at the closed point of  $\widetilde{\pazocal{B}}^{\circ}_{c}/(t)$ is an element in the residue field of $c.$ 

It is the main theorem of \cite{un-inf} that  the above  sum  is finite and is  independent of all the choices involved. Moreover, $\rho$ induces a map 
from $\Lambda ^3 k(C,\mathcal{P}) ^{\times}$ to $k.$

\subsection{Modifications in the characteristic $p$ case}\label{section-mod-in-char} Let $p$ be a prime number and suppose that $R$ is a ring of characteristic $p,$ i.e. $\mathbb{F}_{p} \subseteq R.$   We cannot define a logarithm map from $(1+rR_{\infty})$ to $R_{\infty}$ since the power series expression of the logarithm map has denominators.  On the other hand, there is a well-defined logarithm homomorphism $$
\log: (1+tR_{m})^{\times } \to R_{m}
$$ 
given by $\log (1+z)=\sum _{1\leq n<p} (-1)^{n+1}\frac{z^n}{n},$ for $z \in tR_{m},$  when  $m\leq p.$ 
 Similarly, the map $\log^{\circ}:R_{m} ^{\times} \to R_{m}$ is defined by $\log ^{\circ}(\alpha)=\log (\frac{\alpha}{\alpha(0)})$ and is a homomorphism. As above, the maps $\ell _i: R_{m} ^{\times } \to R$ are defined 
 by the formula $\ell _{i}(\alpha)=t_{i}(\log ^{\circ}(\alpha)),$ for $1\leq i <m.$

 We can define a map $\ell i_{2}: B_{2}(R_{2}) \to R$ using the formula (\ref{char0form1}), when $p>2.$  More explicitly, the map $\ell :=\ell _{2} \wedge \ell _1: \Lambda^{2}R_{3} ^{\times} \to R$ is used to define $\ell i_{2}: B_{2}(R_{2}) \to R$ by the following commutative diagram 
  \[
\begin{CD}
 B_{2}(R_{3}) @>{\delta}>> \Lambda^2 R_{3}^{\times}\\
 @VVV   @VV{\ell_{2}\wedge \ell _1}V \\
  B_{2}(R_{2}) @>{\ell i_{2}}>> R,
\end{CD}
\]
exactly analogous to (\ref{char0form2}). 

If we assume now that $C/k_{2}$ is a smooth and proper curve, where $k$ is a field of characteristic $p>2$ then the construction in the previous section carries over in this case to give a map $\rho: \Lambda ^{3} k(C,\mathcal{P})^{\times} \to k.$  In the construction, we need to make the following  modifications. In the characteristic 0 case we choose liftings $\tilde{\pazocal{A}}$ and $\tilde{\pazocal{B}}^{\circ}_{c}$ which are smooth over $k_{\infty}.$ In the characteristic $p$ case, we will choose these liftings to be smooth $k_{3}$-algebras. Also in the definition of $\omega,$ we will start with smooth $k_{3}$-algebras $\hat{\pazocal{R}}$ and $\tilde{\pazocal{R}}.$ Then $\overline{\chi}$  will be a morphism of $k_{3}$-algebras and $\overline{\varphi}$ will be an isomorphism from $\underline{\hat{\pazocal{R}}}_{3}$  to $\hat{\pazocal{R}},$ where  $\underline{\hat{\pazocal{R}}}:=\hat{\pazocal{R}}/(t).$ Finally, we note that the definition of $\Omega$ works as in characteristic 0. In the definition of $\Omega,$ in order to obtain the coefficients $\alpha_{1i},$ $\hat{\alpha}_{2i}$ and $\tilde{\alpha}_{2i},$ we are essentially using $\ell _1$ and $\ell_{2},$ which also makes sense on  $\underline{\hat{\pazocal{R}}}_3.$

 \section{The additive dilogarithm in characteristic $p$ }

From now, we assume that $p \geq 5$ throughout the paper. When $R$ is a ring of characteristic $p,$ the additive dilogarithm $\ell i _{2} :B_{2}(R) \to R$ does not tell the whole story.  There is another function, which is of  characteristic $p$ in nature that completes the picture. Such a function was constructed in \cite{un-ao}. We describe this function in this section. 

 The construction is based on the  $1\frac{1}{2}$-logarithm  defined by Kontsevich in \cite{kont}. Let us first recall this function which is defined as: 
 
 $$  
 \pounds_{1}(s)=\sum_{1 \leq i \leq p-1}\frac{s^{i}}{i},
 $$
 for $s \in R.$ If we define
$$
\ell i_{2} ^{(p)}([s+\alpha t]):=\frac{\alpha^p}{s^p(1-s)^p}\pounds _1 (s) =\frac{\alpha^p}{s^p(1-s)^p}\sum_{1 \leq i \leq p-1}\frac{s}{i},
$$
for $s+\alpha t \in R_{2} ^{\flat},$  we deduce that $\ell i_{2} ^{(p)}$ induces a map: 
$$
\ell i_{2} ^{(p)}: B_{2}(R_{2}) \to R
$$
using the functional equations for $\pounds_{1}$ \cite{un-ao}. We  note that with the notation of \cite{un-ao}, we have $\ell i_{2} ^{(p)}=\mathfrak{Li}_{2} ^{p}.$   Because of the context,  it is not possible to  confuse the notation $\ell i_{2} ^{(p)}$  for the one for the  $p$-adic dilogarithm.

Similar to $\ell i_2$ the map $\ell i_{2} ^{(p)}$  can be expressed in terms of the differential in the Bloch complex. Namely, the diagram: 

$$
\xymatrix{
B_{2}(R_p)    \ar^{\delta}[r] \ar@{->>}[d] & \Lambda ^2 R_{p} ^{\times} \ar^{ \frac{1}{2} \sum _{1 \leq i <p }i\cdot \ell _{p-i} \wedge \ell _{i}}[d] \\   B_{2}(R_2) \ar^{\ell i_{2} ^{(p)}}[r] & R ,}
$$
commutes, allowing  us to  write  
\begin{align}\label{pound-delta}
\ell i_{2} ^{(p)}=(\frac{1}{2} \sum _{1 \leq i <p }i\cdot \ell _{p-i} \wedge \ell _{i}) \circ \delta. 
\end{align}
We put $\ell^{(p)}:=\frac{1}{2} \sum _{1 \leq i <p }i\cdot \ell _{p-i} \wedge \ell _{i},$ and rewrite the above expression as
\begin{align}\label{pound-delta'}
\ell i_{2} ^{(p)}= \ell^{(p)} \circ \delta. 
\end{align}
The above expression should be thought of as analogous to the expressions (\ref{char0form2}) and (\ref{char0form2'}) which express $\ell i_2$ in terms of $\delta$ and $\ell$ and used to construct the infinitesimal Chow dilogarithm above. Similarly, the  expression (\ref{pound-delta'}) will be used to define a Chow-Kontsevich dilogarithm.

We mentioned in the beginning of this section that, in characteristic $p,$ the additive dilogarithm does not tell the whole story. More precisely, that one does not get an injective regulator map if one restricts $\ell i_{2}$ to the appropriate part of the $K$-group. 
Below is a justification that together with 
$\ell i_{2} ^{(p)},$ they suffice \cite{un-ao}: 
\begin{theorem} Let $k$ be the algebraic closure of $\mathbb{F}_{p}.$ 
The direct sum $\ell i_2\oplus \ell i_{2} ^{(p)}$  induces an isomorphism 
$$
K_{3} ^{\circ} (k_{2}) \to B_{2} ^{\circ}(k_{2})   \to k \oplus k
$$
when restricted to the infinitesimal part of $K_{3}(k_{2}). $ 
 \end{theorem}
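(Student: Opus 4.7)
The plan is to combine the known two-dimensionality of $K_{3}^{\circ}(k_{2})$ with a weight decomposition coming from rescaling the infinitesimal direction $t$, reducing the isomorphism claim to checking nonvanishing of each of $\ell i_{2}$, $\ell i_{2}^{(p)}$ on a distinguished one-dimensional summand.

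\emph{Step 1: reduction to injectivity.} As recalled earlier in the paper, for $k$ algebraically closed of characteristic $p\geq 5$ the indecomposable group $K_{3}^{\circ}(k_{2})$ is isomorphic to $k\oplus k$, and it maps to the infinitesimal part of $\ker(\delta)\subset B_{2}(k_{2})$. Since both source and target of $\ell i_{2}\oplus \ell i_{2}^{(p)}$ are then two-dimensional $k$-vector spaces, it suffices to prove injectivity.

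\emph{Step 2: weight decomposition via rescaling.} For $\lambda\in k^{\times}$, the $k$-algebra automorphism $t\mapsto \lambda t$ of $k_{2}$ induces compatible automorphisms $\tau_{\lambda}$ of $B_{2}(k_{2})$ and of $K_{3}(k_{2})$, preserving the infinitesimal parts. From the explicit formula \eqref{char0form1} and the formula
$$
\ell i_{2}^{(p)}([s+at])=\frac{a^{p}}{s^{p}(1-s)^{p}}\pounds_{1}(s)
$$
one reads off that $\ell i_{2}\circ \tau_{\lambda}=\lambda^{3}\,\ell i_{2}$ and $\ell i_{2}^{(p)}\circ \tau_{\lambda}=\lambda^{p}\,\ell i_{2}^{(p)}$. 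Since $k$ is infinite, one may choose $\lambda$ with $\lambda^{p-3}\neq 1$, so $\tau_{\lambda}$ acts on $K_{3}^{\circ}(k_{2})$ with two distinct eigenvalues $\lambda^{3}$ and $\lambda^{p}$, producing a canonical splitting $K_{3}^{\circ}(k_{2})=V_{3}\oplus V_{p}$ into one-dimensional eigenspaces. By the compatibility, $\ell i_{2}$ vanishes on $V_{p}$ and $\ell i_{2}^{(p)}$ vanishes on $V_{3}$, so the map $\ell i_{2}\oplus \ell i_{2}^{(p)}$ is represented, with respect to bases of $V_{3}$ and $V_{p}$, by a diagonal $2\times 2$ matrix.

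\emph{Step 3: nonvanishing on each eigenspace.} It remains to show that $\ell i_{2}|_{V_{3}}\neq 0$ and $\ell i_{2}^{(p)}|_{V_{p}}\neq 0$. Since each $V_{w}$ is one-dimensional, a single witness in each weight suffices. Such witnesses are produced by forming combinations of symbols $\sum n_{i}[s_{i}+a_{i}t]$ in $B_{2}(k_{2})$ obtained from infinitesimal deformations of the five-term relation \eqref{5term}, lying in $\ker(\delta)^{\circ}$; evaluating the formulas above gives a nonzero element of $k$ in each case, using the nonvanishing of the rational function $a^{3}/(s^{2}(1-s)^{2})$ and the nonvanishing of $\pounds_{1}(s)$ for suitable $s\in k^{\flat}$ respectively. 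Together with Step 2 this shows the diagonal matrix is invertible and concludes the proof.

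\emph{Main obstacle.} The weight argument in Step 2 is essentially formal; the substantive work is Step 3, i.e.~explicitly producing elements of $K_{3}^{\circ}(k_{2})$ (equivalently, nontrivial classes in $\ker(\delta)^{\circ}$) in each of the two weight components and verifying that the respective dilogarithm does not vanish on them. This is where the characteristic-$p$ identity $\pounds_{1}$ plays its essential role: one must know that $\pounds_{1}$ does not lie in the ideal generated by the five-term functional equations composed with $\ell i_{2}$, which is exactly the content of the construction of $\ell i_{2}^{(p)}$ in \cite{un-ao} that supplies the required witness in $V_{p}$.
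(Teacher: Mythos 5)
First, a structural point: the paper does not prove this theorem at all — it is quoted as a result of \cite{un-ao} — so there is no internal proof to compare against, and your attempt must be judged on its own terms. Judged so, it has a genuine gap at its pivot, Step 2. You claim that $\tau_{\lambda}$ acts on $K_{3}^{\circ}(k_{2})$ with the two eigenvalues $\lambda^{3}$ and $\lambda^{p}$, and you ``read this off'' from the transformation laws of $\ell i_{2}$ and $\ell i_{2}^{(p)}$. But those transformation laws only constrain the action of $\tau_{\lambda}$ on the image of $K_{3}^{\circ}(k_{2})$ inside $k\oplus k$; they say nothing about the eigenvalues of $\tau_{\lambda}$ on $K_{3}^{\circ}(k_{2})$ itself unless you already know that $\ell i_{2}\oplus\ell i_{2}^{(p)}$ is injective --- which is exactly what is to be proved. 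To know that the rescaling action on the relative $K_{3}$ of $k[t]/(t^{2})$ is $k$-linear for the relevant $k$-structure and diagonalizable with weights exactly $3$ and $p$, one needs the actual functorial computation of relative $K$-theory (Hesselholt--Madsen, or the de Rham--Witt description), not merely the abstract isomorphism $K_{3}^{\circ}(k_{2})\cong k\oplus k$. So Step 2 is not ``essentially formal''; it carries most of the content, together with Step 3.

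Step 3 is a promissory note rather than an argument: you do not exhibit the elements of $\ker(\delta)^{\circ}$ --- and, more to the point, of the image of $K_{3}^{\circ}(k_{2})$ in $B_{2}^{\circ}(k_{2})$ --- on which $\ell i_{2}$, resp.\ $\ell i_{2}^{(p)}$, is nonzero, nor do you verify that such witnesses lie in the asserted weight eigenspaces; producing classes in $\ker(\delta)^{\circ}$ that are provably hit by $K_{3}^{\circ}(k_{2})$ is the hard part. A further small point: $\ell i_{2}^{(p)}$ is only Frobenius-semilinear in the infinitesimal variable (it involves $a^{p}$), so ``injective between two-dimensional $k$-vector spaces implies bijective'' is not automatic and genuinely uses $k=\overline{\mathbb{F}}_{p}$ (e.g.\ by passing to finite subfields). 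To complete your outline you would need to supply the weight computation for $K_{3}^{\circ}(k_{2})$ from the $K$-theoretic side and the explicit witnesses of Step 3; as it stands, the proof defers precisely the content that \cite{un-ao} establishes.
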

 We expect the analog of the above theorem to hold for any field $k$ of characteristic $p\geq 5.$ In the general case, however, one would get an isomorphism  with indecomposable part of $K_{3}(k_2)^{\circ}.$

\section{The comparison  1-form for the Chow-Kontsevich dilogarithm}

One of the most  essential steps in defining the infinitesimal Chow dilogarithm was the construction of a 1-form that  compares  the different choices of liftings for the parameters. More precisely, this 1-form $\Omega$ has the  property that for two different choices of liftings, the difference of the $\ell$ values of their  residues can be expressed in terms of the residue of $\Omega$ \cite[Proposition 2.4.4]{un-inf}. In characteristic $p,$ we will construct a similar 1-form, which will denote with $\Omega^{(p)},$ that will have this property with $\ell^{(p)}$  replaced with $\ell$ in the last sentence.

\subsection{The definition of $\Omega ^{(p)}$ on $\Lambda ^{3}R_{\infty} ^{\times}$}\label{defn-of-omega-sec}
 Suppose that $R$ is a ring of characteristic $p.$  We  define $\Omega^{(p)}$ on $\Lambda ^{3}R_{\infty}^{\times}$ as the unique map 
$$
\Omega ^{(p)}: \Lambda ^{3}R_{\infty}^{\times} \to \Omega ^{1} _{R}
$$
which satisfies the following properties (i)-(vi). 

$(i)$ $\Omega ^{(p)}(u\wedge v \wedge w)=0,$ if $u \in 1+(t^p)R_{\infty}.$

\noindent This last expression implies that the map $\Omega ^{(p)}$ descends to give a map from  $\Lambda ^3R_p ^{\times}.$  Every element of $R_{p}^{\times}$ is represented by products of terms of the form $f$ with $f \in R^{\times} \subseteq R_{p} ^{\times}$ and terms of the form $\underline{e}^{\alpha t^a} \in R_{p} ^{\times},$ with $\alpha \in R$ and $0<a<p.$   It is sufficient to determine $\Omega ^{(p)}$ on the elements of the form $f$ and $\underline{e}^{\alpha t^a}.$  We proceed to do this. 

$(ii)$ $\Omega^{(p)}(f\wedge g \wedge h)=\Omega ^{(p)}(\underline{e}^{\alpha t^a}\wedge g \wedge h)=0,$ for $f, g, h \in R^{\times}$ and $\alpha \in R, $ and $0<a<p.$ 

$(iii)$ $\Omega^{(p)}(\underline{e}^{\alpha t^a}\wedge \underline{e}^{\beta t^b} \wedge h)=0 ,$ if $a+b \neq p;$ and $\Omega^{(p)}(\underline{e}^{\alpha t^a}\wedge \underline{e}^{\beta t^b} \wedge h)=\alpha b\beta \frac{dh}{h}$ for $ h \in R^{\times}$ and $\alpha, \beta \in R, $ and $p>a>b>0,$ with $a+b=p.$ 

$(iv)$  $\Omega^{(p)}(\underline{e}^{\alpha t^a}\wedge \underline{e}^{\beta t^b} \wedge \underline{e}^{\gamma t^c} )=0 ,$ if $a+b+c \neq p.$

$(v)$  $\Omega^{(p)}(\underline{e}^{\alpha t^a}\wedge \underline{e}^{\beta t^b} \wedge \underline{e}^{\gamma t^c} )=\alpha(b \beta \cdot d\gamma-c \gamma \cdot d\beta) \in \Omega ^{1}_{R},$ if $a+b+c=p$ and $a>b\geq c>0.$ Here, $d:R\to \Omega ^{1}_{R}$ denotes the canonical derivation.

$(vi)$  $\Omega^{(p)}(\underline{e}^{\alpha t^a}\wedge \underline{e}^{\beta t^b} \wedge \underline{e}^{\gamma t^c} )=\gamma(a \alpha \cdot d\beta-b \beta \cdot d\alpha) \in \Omega ^{1}_{R},$ if $a+b+c=p$ and $a=b> c>0.$

We can rephrase the above definition much more concisely by slightly abusing the notation. First note that in the above, the contribution coming from the term $h \in R^{\times}$ is $\frac{dh}{h}.$ If we abuse the notation and write $h=\underline{e}^{\gamma},$ then $\frac{dh}{h}=d\gamma$ and this expression resembles the contributions coming from other terms of the form   $\underline{e}^{\alpha t^a}$ with $0<a.$ 

We can then replace the conditions (ii)-(vi) above with the following equivalent formulation. For $p>a\geq b \geq c\geq 0,$ we define $\Omega^{(p)} (\underline{e}^{\alpha t^a}\wedge \underline{e}^{\beta t^b}\wedge \underline{e}^{\gamma t^c})$ as: 

$
(ii)' \;\;\; 0, \;\;\; {\rm if} \;\; a+b+c \neq p
$

$
(iii)'\;\;\;\alpha(b \beta \cdot d\gamma-c \gamma \cdot d\beta), \;\;\; {\rm if} \;\; a+b+c = p, \; {\rm and } 
\; a >b\geq c\geq 0
$

$
(iv)' \;\;\;\gamma(a\alpha \cdot d\beta-b\beta\cdot d\alpha),  \;\;\; {\rm if} \;\; a+b+c = p, \; {\rm and } 
\; a =b> c\geq 0.
$

Note that in the above expression, if $c=0$ even though the expression $\gamma$ does not make sense we set $c\gamma:=0.$  We emphasize that the above definition covers all the possibilities since $p\geq 5.$

\subsection{The definition of $\Omega ^{(p)}$ for  a split algebra}\label{split-def}

If $A$ is a ring  and  $I$ is an ideal of $A,$ we  let $(A,I)^{\times}:=\{(a,b)| a, \, b \in A ^{\times},\; a-b \in I \},$ and let $\pi_i:(A,I)^{\times} \to A^{\times},$ for $i=1,2$  denote the two projections.  We define a map 
$$
\tilde{\Omega} ^{(p)} : \Lambda ^{3}(R_p,(t))^{\times} \to \Omega ^{1} _{R}.
$$
Let $s:\Lambda ^{3}(R_{p},(t))^{\times} \to \Lambda ^{3}R_{p} ^{\times},$ denote  the map given by the difference $s:=\Lambda ^3\pi_{1} -\Lambda ^{3}\pi_2.$ Then we define $\tilde{\Omega} ^{(p)}:=\Omega^{(p)}\circ s$ as the composition of $s$ and $\Omega ^{(p)}.$

\section{The invariance of the  residue of $\Omega^{(p)}$ with respect to reparametrization} In order to generalize the definition of $\Omega ^{(p)}$ to certain rings without a prescribed set of coordinate functions, we need a certain invariance property for $\Omega ^{(p)}.$ 
The aim of this section is to describe and prove this invariance property.  

\subsection{An elementary formula for an infinitesimal automorphism of $k((s))_{\infty}$} For a ring $R,$ let $R((s))$ denote the ring of formal Laurent series in $R,$ i.e. the localization of $R[[s]]$ with respect to the set of non-negative powers of $s.$ Note that we can describe $R((s))_{\infty}=R((s))[[t]]$ as the set of formal series 
$$
\sum _{0\leq j \atop N_{j} \leq i} f_{ij} s^it^j, 
$$
with $N_{j} \in \mathbb{Z},$ for every $0\leq j;$ and $f_{ij} \in R,$ for all $0\leq j$ and $N_j \leq i.$  
We endow $R((s))_{\infty}$ with the topology such that a sequence converges, if for each monomial $s^it^j,$ its coefficient in the sequence stabilizes; and for each $j,$ there is an $N_j \in \mathbb{Z}$ such that the coefficients of $s^it^j$ in each term of the sequence is equal to 0, for every $i<N_j.$ By this description, we see that any continuous automorphism $\sigma$ of the $R_{\infty}$-algebra $R((s))_{\infty}$ satisfies 
$$
\sigma(\sum _{0\leq j \atop N_{j} \leq i} f_{ij} s^it^j)=\sum _{0\leq j \atop N_{j} \leq i} f_{ij} \sigma(s)^i t^j,
$$
and therefore is determined by its value $\sigma(s)$ on $s.$ Given any $x \in R((s))$ and $w \geq 1,$ the element $s+xt^{w}$ is invertible in $R((s))_{\infty},$ since 
$$
\frac{1}{s}\sum _{0\leq n} (-\frac{xt^w}{s})^{n} \in R((s))_{\infty}.
 $$
 Moreover, the sequence $((s+xt^w)^{n})$ converges to 0. These two facts imply that the $\sigma$ defined by the above formula with $\sigma (s)=s+xt^w$ is a continuous automorphism of the $R_{\infty}$-algebra $R((s))_{\infty}.$ 
 
  Returning to our standard set-up, where $k$ is a field of characteristic $p,$ suppose that $\sigma$  is a continuous  $k_{\infty}$-automorphism of $k((s))_{\infty}$  as above such that  $\sigma(s)=s+x t^w$ with $w \geq 1$ and $x\in k((s)).$ We will do an elementary  computation to express the images of $\sigma(f)$ and $\sigma(e^{\alpha t^a})$ in $k((s))_{p},$  where $f \in k((s))^{\times}$ and $\alpha \in k((s))$ and $a\geq 1,$ in terms of the logarithmic derivatives of $f$ and the derivatives of $\alpha.$  
 
The formulas in question do not have $p$-torsion when they are reduced modulo $(t^p)$ as above. In order to prove the formulas, it is easier to prove them first when 
 $k$ is replaced with a ring $R$ which does not have $\mathbb{Z}$-torsion and then deduce the result for $k$ by using an appropriate map $R \to k.$ The advantage of using $R$ is that here one can use the Taylor expansion formula, which has denominators, after passing to $R_{\mathbb{Q}}.$ We start with a lemma which is a restatement of the Taylor expansion formula. 

\begin{lemma}
Suppose that  $R$ is a ring without $\mathbb{Z}$-torsion  and $\sigma$ is the unique continuous  $R_{\infty}$-automorphism of $R((s))_{\infty}$ such that $\sigma (s)=s+xt^w,$ with $x \in R((s))$ and $w\geq 1.$ Then  for $f\in R((s))$ we have 
 $$
 \sigma (f)=\sum _{0\leq i}\frac{(xt^w)^{i}f^{(i)}}{i!} \in R((s))[[t]]\subseteq  R_{\mathbb{Q}}((s))[[t]].
 $$ Here,  $f'$ is defined by $df=f'ds$ and $f^{(i)}:=(f^{(i-1)})'$ with $f^{(0)}:=f,$ for $i\geq 1.$  

\end{lemma}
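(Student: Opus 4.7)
The plan is to recognize the formula as Taylor's expansion and verify it by exhibiting the right-hand side as a continuous $R_\infty$-algebra endomorphism with the correct value at $s$, then invoking uniqueness. Since $R$ has no $\mathbb{Z}$-torsion, the inclusion $R((s))_\infty \hookrightarrow R_{\mathbb{Q}}((s))_\infty$ is injective, so one can work in the larger ring, where the denominators $i!$ make sense, and transport the identity back.

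First I would define, for $f \in R_{\mathbb{Q}}((s))$,
$$T(f) := \sum_{i \geq 0} \frac{(xt^w)^{i} f^{(i)}}{i!} \in R_{\mathbb{Q}}((s))_\infty,$$
noting that the $i$-th summand has $t$-adic valuation $\geq iw$, so the sum converges in the $t$-adic topology. Extending $T$ continuously and $R_{\mathbb{Q}}[[t]]$-linearly gives a map $T : R_{\mathbb{Q}}((s))_\infty \to R_{\mathbb{Q}}((s))_\infty$. The key computation is multiplicativity: for $f, g \in R_{\mathbb{Q}}((s))$ the higher-order Leibniz rule $(fg)^{(n)} = \sum_{i+j=n} \binom{n}{i} f^{(i)} g^{(j)}$ gives
$$T(f)\,T(g) = \sum_{i, j \geq 0} \frac{(xt^w)^{i+j}}{i!\, j!} f^{(i)} g^{(j)} = \sum_{n \geq 0} \frac{(xt^w)^{n}}{n!} (fg)^{(n)} = T(fg),$$
and $T(s) = s + xt^w$ because $s^{(i)}$ vanishes for $i \geq 2$. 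Thus $T$ is a continuous $R_{\mathbb{Q}}[[t]]$-algebra endomorphism of $R_{\mathbb{Q}}((s))_\infty$ sending $s$ to $s + xt^w$.

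Finally, the base change $\sigma \otimes_R R_{\mathbb{Q}}$ is another continuous $R_{\mathbb{Q}}[[t]]$-algebra automorphism of $R_{\mathbb{Q}}((s))_\infty$ with the same effect on $s$, so by the uniqueness asserted in the hypothesis (which transfers to $R_{\mathbb{Q}}$) we conclude $T = \sigma \otimes_R R_{\mathbb{Q}}$. Restricting to $f \in R((s)) \subseteq R_{\mathbb{Q}}((s))$ yields $\sigma(f) = T(f)$; since $\sigma(f)$ already lies in $R((s))_\infty$, the Taylor sum on the right does as well, proving both the equality and the integrality. There is no real obstacle: the only point requiring care is that the sum converges $t$-adically and represents a genuine ring homomorphism, both handled by the valuation estimate $\geq iw$ and the Leibniz computation above.
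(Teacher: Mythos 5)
Your proof is correct, but it takes a genuinely different route from the paper's. The paper reduces by $R$-linearity and continuity to monomials $f=s^{n}$, then by the substitution $t\mapsto xt^{w}$ to the case $\sigma(s)=s+t$, and finally by naturality to $R=\mathbb{R}$, where the identity becomes the classical analytic Taylor expansion of $(s+t)^{n}$ near $s=1$; the formal statement is then recovered by identifying analytic functions with their power series. You instead stay entirely within formal algebra: you exhibit the Taylor sum as a continuous $R_{\mathbb{Q}}[[t]]$-algebra endomorphism $T$ of $R_{\mathbb{Q}}((s))_{\infty}$ (multiplicativity via the higher Leibniz rule, $t$-adic convergence via the valuation bound $iw$) satisfying $T(s)=s+xt^{w}$, and conclude by uniqueness before restricting back to $R((s))$, where torsion-freeness makes the comparison legitimate. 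Your argument is more self-contained and arguably cleaner, since it avoids the detour through real analysis and the transfer back to a general torsion-free $R$, at the cost of one Leibniz computation; the paper's version is shorter on the page but leans on the identification of convergent power series with analytic functions. The one point you should make explicit is that the uniqueness you invoke must be read as uniqueness among continuous \emph{endomorphisms} determined by their value on $s$ --- which does hold, since $R_{\mathbb{Q}}[s,s^{-1}][[t]]$ is dense and $\phi(s^{-1})=\phi(s)^{-1}$ is forced --- because you have not shown that $T$ is an automorphism. Both routes obtain the integrality assertion the same way, namely from the a priori fact that $\sigma(f)$ lies in $R((s))[[t]]$.
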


\begin{proof}
By using $R$-linearity  and continuity, we reduce to the case when $f(s)=s^{n}$ for some $n \in \mathbb{Z}.$ 
By using the map that sends $t$ to $xt^w,$ we reduce to the case where $\sigma(s)=s+t.$ By the naturality of the formula, it suffices to check it  for $R=\mathbb{R}.$     Let $f(s)=s^{n}$ for some $n \in \mathbb{Z}.$ If we first fix $s$ to be a value near 1 and vary $t$ near 0 and expand $f(s+t)$ near $s,$  the  Taylor expansion formula gives  $f(s+t)=\sum _{0\leq i  }\frac{f^{(i)}(s)t^i}{i!}.$ Now letting $s$ vary, we realize that both sides are analytic functions of $s$ and $t$ near 1 and 0 and we have the formula we are looking for between two analytic functions. The result then follows by identifying the analytic functions with their power series expansions. 
 \end{proof}

\begin{lemma}\label{lemma-repar-taylor1} Suppose that $R$ and $\sigma$  are as above. 
Then, for $1\leq a$ and $\alpha \in R((s)),$
$$\sigma(e^{\alpha t^a})=e^{\sum _{0\leq i}\frac{x^i
\alpha^{(i)}}{i!}t^{a+iw}} \in R_{\mathbb{Q}}((s))_{\infty};$$    and, for $f \in R((s)))^{\times},$ 
$$
\sigma(f)=fe^{\sum _{1\leq i}\frac{x^i(f'/f)^{(i-1)}}{i!}t^{iw} } \in R_{\mathbb{Q}}((s))_{\infty}.
$$

 \end{lemma}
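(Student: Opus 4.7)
The plan is to derive both formulas from the Taylor-type expansion of the preceding lemma, working throughout in $R_{\mathbb{Q}}((s))_{\infty}$ where the exponential series converges $t$-adically. Each formula will come down to a short manipulation once the previous lemma is in hand.

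For the first formula, note that $\sigma$ is a continuous $R_{\infty}$-algebra automorphism of $R((s))_{\infty}$ fixing $t$, so it commutes with the formal exponential and
$$\sigma(e^{\alpha t^{a}}) \;=\; e^{\,\sigma(\alpha)\,t^{a}}.$$
Substituting the expansion $\sigma(\alpha) = \sum_{i\geq 0}\frac{x^{i}\alpha^{(i)}}{i!}\,t^{iw}$ from the previous lemma and absorbing the factor $t^{a}$ into each summand yields the claimed formula immediately.

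For the second formula, the previous lemma gives
$$\sigma(f)/f \;=\; \sum_{i\geq 0}\frac{x^{i}\,(f^{(i)}/f)}{i!}\,t^{iw},$$
so, setting $u:=f'/f$, the lemma reduces to establishing the purely formal identity
$$\sum_{i\geq 0}\frac{X^{i}\,f^{(i)}/f}{i!} \;=\; \exp\!\Bigl(\sum_{i\geq 1}\frac{X^{i}u^{(i-1)}}{i!}\Bigr) \qquad \text{in } R_{\mathbb{Q}}((s))[[X]],$$
and then specializing at $X=xt^{w}$. I would prove this identity by an ODE argument. Let $L(X)$ and $R(X)$ denote the two sides and put $U(X):=\sum_{j\geq 0}\frac{X^{j}u^{(j)}}{j!}$. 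A direct differentiation gives $R'(X)=R(X)\,U(X)$. The matching relation $L'(X)=L(X)\,U(X)$ reduces, after comparing coefficients of $X^{j}$, to the Leibniz rule $f^{(j+1)}=(uf)^{(j)}=\sum_{k}\binom{j}{k}u^{(k)}f^{(j-k)}$. Since $L(0)=R(0)=1$, uniqueness of solutions to $Y'=YU$ in $R_{\mathbb{Q}}((s))[[X]]$ forces $L=R$.

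The only non-routine step is the formal identity in the display above; it is essentially the generating-function form of a Fa\`a di Bruno/complete Bell polynomial identity, but the ODE formulation reduces it cleanly to a single application of Leibniz. Everything else is bookkeeping with the previous lemma.
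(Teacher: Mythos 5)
Your proposal is correct, and for the second formula it takes a genuinely different route from the paper. The first formula is handled identically in both: $\sigma$ commutes with the exponential and one substitutes the Taylor expansion of $\sigma(\alpha)$. For the second formula, the paper exploits that both sides are multiplicative in $f$ and reduces to three elementary cases: $f\in R^{\times}$ (trivial), $f\in 1+sR[[s]]$ (write $f=e^{\log f}$ and apply the first formula to $\alpha=\log f$), and $f=s$ (expand $\log(1+xt^{w}/s)$ directly and match it with $\sum_i \frac{x^{i}(1/s)^{(i-1)}}{i!}t^{iw}$). You instead apply the Taylor lemma directly to $f$ itself to get $\sigma(f)/f=\sum_{i}\frac{x^{i}f^{(i)}/f}{i!}t^{iw}$ and then establish the universal identity $\sum_{i\geq 0}\frac{X^{i}f^{(i)}}{i!\,f}=\exp\bigl(\sum_{i\geq 1}\frac{X^{i}(f'/f)^{(i-1)}}{i!}\bigr)$ by the ODE argument $Y'=YU$, $Y(0)=1$, with the Leibniz rule $f^{(j+1)}=(uf)^{(j)}$ doing the work; this is sound, since in $R_{\mathbb{Q}}((s))[[X]]$ a solution of $Y'=YU$ with $Y(0)=1$ is unique, and the specialization $X=xt^{w}$ converges $t$-adically. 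Your version buys a uniform treatment of arbitrary units $f\in R((s))^{\times}$ with no case analysis (and in particular does not rely on decomposing a unit of $R((s))$ as $s^{n}\cdot u_{0}\cdot(1+sR[[s]])$, which for a general coefficient ring $R$ requires a word of justification), at the cost of proving one extra formal identity; the paper's version is more elementary case by case and makes the appearance of $(f'/f)^{(i-1)}$ transparent via $\alpha=\log f$.
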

 
 \begin{proof}
 Note that for the proof, we can replace $R$ with $R_{\mathbb{Q}}$ and  assume that $ R$ is a $\mathbb{Q}$-algebra.  If $1\leq a$ and $\alpha \in R((s)) $ then by the previous lemma we have $\sigma (\alpha)=\sum_{0 \leq i }\frac{(xt^w)^i\alpha ^{(i)}}{i!},$ which implies that 
 $$
 \sigma(e^{\alpha t^a})=e^{\sigma(\alpha)t^a}=e^{\sum_{0 \leq i }\frac{x^i\alpha ^{(i)}}{i!}t^{a+iw}}.
 $$

 On the other hand, for $f \in R((s))^{\times},$  both sides of the formula for $\sigma(f)$ above are compatible with multiplication.  Therefore, it is enough to prove the formula for $f =s,$ for $f \in R^{\times }$ or for $f \in 1+sR[[s]].$ The formula clearly holds for $f\in R^{\times}$ since $\sigma(f)=f$ and $f'=0.$ In case $f\in 1+sR[[s]],$ then letting  $\alpha:=\log (f) \in R[[s]],$ we have $f=e^{\alpha}.$ Using the previous lemma we obtain 
 $$
 \sigma (f)=e^{\sigma (\alpha)}=e^{\sum_{0 \leq i }\frac{x^i\alpha ^{(i)}}{i!}t^{iw}}=fe^{\sum_{1 \leq i }\frac{x^i(f'/f) ^{(i-1)}}{i!}t^{iw}}.
 $$
 
 For $f=s,$ we have 
 $$
 \sigma(s)=s+xt^w=s(1+\frac{xt^w}{s})=se^{\log(1+\frac{xt^w}{s})}=se^{\sum _{1\leq i}\frac{(-1)^{i-1}}{i}(\frac{xt^w}{s})^i}=se^{\sum _{1\leq i}\frac{x^i(1/s)^{(i-1)}}{i!}t^{iw}}.
 $$
 
\end{proof}
 
For a ring $R$ on which $(p-1)!$ is invertible and for $\alpha \in tR_{\infty},$  recall that we put $\underline{e}^{\alpha}:=\sum _{0\leq i<p}\frac{\alpha^i}{i!}.$ Next we consider the case when   $k$ is a field of  characteristic $p.$

 \begin{lemma}\label{lemma-repar-taylor2}
Let $\sigma$ be the automorphism of the $k_{\infty}$ algebra $k((s))_{\infty},$ which is the identity map modulo $(t)$ and has the property that  $\sigma(s)=s+x t^w,$ with $w \geq 1$ and $x\in k((s)),$ then $$\sigma(\underline{e}^{\alpha t^a})=\prod_{0\leq i<p}\underline{e}^{\frac{x^i
\alpha^{(i)}}{i!}t^{a+iw}}$$ in $k((s))_{p},$ for $1\leq a$ and $\alpha \in k((s));$ and 
$$\sigma(f)=f\prod_{1\leq i<p}\underline{e}^{\frac{x^i
(f'/f)^{(i-1)}}{i!}t^{iw}}$$ in $k((s))_{p},$ for $f \in k((s))^{\times}.$ 

 \end{lemma}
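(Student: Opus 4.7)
The plan is to lift the identity to mixed characteristic and deduce it from Lemma \ref{lemma-repar-taylor1}. Pick a $\mathbb{Z}$-torsion-free ring $W$ equipped with a surjection $\pi\colon W\twoheadrightarrow k$, for instance a Cohen ring of $k$. Lift $x,\alpha\in k((s))$ coefficientwise to $\tilde x,\tilde\alpha\in W((s))$; to lift $f\in k((s))^\times$, write $f=s^n u$ with $u\in k[[s]]^\times$, lift $u$ to a unit $\tilde u\in W[[s]]^\times$ by Hensel's lemma over the $p$-adically complete $W$, and put $\tilde f:=s^n\tilde u$. Let $\tilde\sigma$ be the unique continuous $W_\infty$-automorphism of $W((s))_\infty$ with $\tilde\sigma(s)=s+\tilde x\,t^w$; by construction it reduces modulo $p$ to $\sigma$.

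Applying Lemma \ref{lemma-repar-taylor1} to $W$ gives, in $W_{\mathbb Q}((s))_\infty$,
\[
\tilde\sigma(e^{\tilde\alpha t^a})=e^{\sum_{0\leq i}\frac{\tilde x^i\tilde\alpha^{(i)}}{i!}t^{a+iw}},\qquad \tilde\sigma(\tilde f)=\tilde f\cdot e^{\sum_{1\leq i}\frac{\tilde x^i(\tilde f'/\tilde f)^{(i-1)}}{i!}t^{iw}}.
\]
The crucial observation is that modulo $t^p$, every exponential $e^{\beta t^b}$ appearing above (with $b\geq 1$, $\beta\in W((s))$) truncates to the \emph{finite} sum $\sum_{0\leq i,\,bi<p}\frac{\beta^i t^{bi}}{i!}$, whose surviving factorials $i!$ are all units of $W$ because $i<p$. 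Hence the truncations of both sides already lie in the integral ring $W((s))_\infty/(t^p)$, without inverting $p$. In particular the truncated exponential $\underline e^{\tilde\alpha t^a}=\sum_{0\leq i<p}\frac{(\tilde\alpha t^a)^i}{i!}$ differs from $e^{\tilde\alpha t^a}$ only by terms with $ai\geq p$, which vanish modulo $t^p$. Using $e^{A+B}=e^A e^B$ in characteristic zero, the right-hand sides modulo $t^p$ become the finite products
\[
\prod_{i:\,a+iw<p}e^{\frac{\tilde x^i\tilde\alpha^{(i)}}{i!}t^{a+iw}},\qquad \tilde f\cdot\prod_{1\leq i,\,iw<p}e^{\frac{\tilde x^i(\tilde f'/\tilde f)^{(i-1)}}{i!}t^{iw}}.
\]

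Now reduce modulo $(p,t^p)$ along $W((s))_\infty/(t^p)\twoheadrightarrow k((s))_p$. For any $\beta\in W((s))$ with image $\bar\beta\in k((s))$ and any $b\geq 1$, the truncation $\sum_{0\leq i,\,bi<p}\frac{\beta^i t^{bi}}{i!}$ reduces to $\underline e^{\bar\beta t^b}$ modulo $t^p$, since the missing summands in $\underline e$ (those with $p/b\leq i<p$) all carry a factor $t^{bi}\geq t^p$. The absent product factors with $a+iw\geq p$ (respectively $iw\geq p$) reduce to $\underline e^0=1$, so extending the product range to $0\leq i<p$ (respectively $1\leq i<p$) is harmless. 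Since $\tilde\sigma,\tilde f,\tilde f'/\tilde f$ reduce to $\sigma,f,f'/f$ and differentiation commutes with $\pi$, the two formulas of the lemma drop out.

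The only real difficulty is bookkeeping the passage between the genuine exponential $e^{(\cdot)}$ in characteristic zero and its truncated cousin $\underline e^{(\cdot)}$ in characteristic $p$: the former is multiplicative and the latter is not, which is precisely why the characteristic-$p$ answer must be written as a product of truncated exponentials rather than a single one. The fact that every factorial surviving after truncation modulo $t^p$ involves $i<p$, hence is a unit of $W$, is the one arithmetic fact that makes the whole reduction go through cleanly.
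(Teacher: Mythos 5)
Your proposal is correct and follows essentially the same route as the paper: lift $x$, $f$, $\alpha$ to a $\mathbb{Z}$-torsion-free ring mapping onto $k$ on which $(p-1)!$ is invertible, apply Lemma \ref{lemma-repar-taylor1} there, observe that the truncation modulo $(t^p)$ is integral because every surviving factorial $i!$ has $i<p$, and then push forward to $k$. Your version merely spells out the lifting of $f$ and the passage from $e^{(\cdot)}$ to $\underline{e}^{(\cdot)}$ in more detail than the paper does.
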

 \begin{proof}
 Suppose that we are given  $x,$ $f$ and $\alpha$ as in the statement of the lemma. 
 We can choose a ring $R$ without $\mathbb{Z}$-torsion and on which $(p-1)!$ is invertible such that there is a map $R\to k,$ and there is $\tilde{x,}$ $\tilde{f} \in  R((s))^{\times}$ and $\tilde{\alpha} \in  R((s)),$ which map to $x,$ $f$ and $\alpha.$ 
 We first apply the previous lemma to $\tilde{f}$ and $\underline{e}^{\tilde{\alpha}t^a},$ and the automorphism given by $\tilde{\sigma}(s)=s+\tilde{x}t^w.$ If we then  reduce the expression modulo $(t^p),$ notice that the expression is in $R((s))_p,$  and then  take the image under the map from $R$ to $k,$ the lemma follows.   \end{proof}
\begin{remark}
As in \textsection \ref{defn-of-omega-sec} above, if we slightly abuse the notation,   the images under $\sigma$   of $f \in R((s))^{\times}$ and of $e^{\alpha t^a}$ with $\alpha \in R((s)))$ and $1 \leq a$ can be expressed using a single formula. Namely, suppose that we allow the notation $f=e^{\alpha t^a},$  with $a=0.$ Then we can express both of  the formulas in Lemma \ref{lemma-repar-taylor1} as
$$
\sigma(e^{\alpha t^a})=e^{\sum _{0\leq i}\frac{x^i
\alpha^{(i)}}{i!}t^{a+iw}} \in R_{\mathbb{Q}}((s))_{\infty},
$$
for $a\geq 0.$ Note that if $a=0,$ this formula reads:  
$$
\sigma(f)=\sigma (e^{\alpha})=e^{\sum _{0 \leq i}\frac{x^i\alpha ^{(i)}}{i!}t^{iw}}=e^{\alpha}e^{\sum _{1 \leq i}\frac{x^i\alpha ^{(i)}}{i!}t^{iw}}=fe^{\sum _{1 \leq i}\frac{x^i(f'/f) ^{(i-1)}}{i!}t^{iw}},
$$
if we think of $\alpha $ as $\log (f)$ and hence use the convention that $\alpha ^{(i)}:=(f'/f)^{(i-1)}$ for $i \geq 1.$ 

In exactly the same manner, both of the formulas in Lemma \ref{lemma-repar-taylor2} can be expressed as 
$$\sigma(\underline{e}^{\alpha t^a})=\prod_{0\leq i<p}\underline{e}^{\frac{x^i
\alpha^{(i)}}{i!}t^{a+iw}} \in k((s))_{p},$$  for $0\leq a.$ 
\end{remark} 
 
 \subsection{The effect of reparametrization on $\Omega ^{(p)}$} 
In order to prove the invariance with respect to reparametrization, we will start with the lemma below which deals with the most basic infinitesimal automorphism. In fact, these automorphisms will generate all the automorphisms that we are interested in.  
\begin{lemma}\label{exact-lemma}
Suppose that $k$ is a field of characteristic $p,$ and $\sigma$ is the $k_{\infty}$-automorphism of $k((s))_{\infty}$ given by $\sigma(s)=s+xt^w,$ with $1\leq w$ and $x\in k((s)).$ Then 
\begin{align}\label{omegapsigma}
    \Omega ^{(p)}\big(\sigma(\underline{e}^{\alpha t^a}\wedge \underline{e}^{\beta t^b}\wedge \underline{e}^{\gamma t^c})\big)-\Omega ^{(p)}(\underline{e}^{\alpha t^a}\wedge \underline{e}^{\beta t^b}\wedge \underline{e}^{\gamma t^c}) \in \Omega ^{1} _{k((s))/k}
\end{align}
 is an exact form  for $a,b,c \geq 0$ and $\alpha, \beta , \gamma \in k((s)).$
\end{lemma}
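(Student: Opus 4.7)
The plan is to expand the left-hand side of~(\ref{omegapsigma}) using the explicit formulas of Lemma~\ref{lemma-repar-taylor2} and then exhibit the resulting sum as a total derivative. Writing
$$
\sigma(\underline{e}^{\alpha t^a}) = \prod_{0 \leq i < p} \underline{e}^{\frac{x^i \alpha^{(i)}}{i!}\, t^{a+iw}},
$$
and similarly for $\sigma(\underline{e}^{\beta t^b})$ and $\sigma(\underline{e}^{\gamma t^c})$, the image $\sigma(\underline{e}^{\alpha t^a} \wedge \underline{e}^{\beta t^b} \wedge \underline{e}^{\gamma t^c})$ expands in $\Lambda^3 k((s))_p^\times$ as a sum of elementary wedges of the form $\underline{e}^{A_i t^{a+iw}} \wedge \underline{e}^{B_j t^{b+jw}} \wedge \underline{e}^{C_k t^{c+kw}}$, with $A_i = x^i\alpha^{(i)}/i!$ and analogous formulas for $B_j$ and $C_k$. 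Since $\Omega^{(p)}$ annihilates all wedges whose total $t$-exponent is not $p$, only the finitely many triples $(i,j,k) \in \{0,\dots,p-1\}^3$ with $a+b+c+(i+j+k)w = p$ contribute, and the $(0,0,0)$ term reproduces $\Omega^{(p)}(\underline{e}^{\alpha t^a} \wedge \underline{e}^{\beta t^b} \wedge \underline{e}^{\gamma t^c})$ itself. Hence the difference in~(\ref{omegapsigma}) is an explicit finite sum indexed by nonzero triples $(i,j,k)$ subject to this numerical constraint.

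To identify this sum as an exact form, I would lift the computation to characteristic zero. Pick a torsion-free ring $R$ on which $(p-1)!$ is invertible together with a surjection $R \twoheadrightarrow k$, and lift $x, \alpha, \beta, \gamma$ to elements of $R((s))$. Over $R_\mathbb{Q}((s))_\infty$, Lemma~\ref{lemma-repar-taylor1} gives the analogous expansion using the genuine exponential, and the formulas defining $\Omega^{(p)}$ continue to make sense since $(p-1)!$ is invertible there. The goal then becomes to show that the characteristic-zero analog of our sum equals $dF + p \cdot G$ for explicit $F, G \in R_\mathbb{Q}((s))$. Applying the reduction $R \to k$ kills the $p$-multiple and yields exactness over $k((s))$.

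The main obstacle is constructing the antiderivative $F$ uniformly. One stratifies according to the weight $w$ and the multiset $\{a, b, c\}$ (taking into account the convention from the remark following Lemma~\ref{lemma-repar-taylor2} in cases where some of $a, b, c$ vanish). For each stratum the constraint $(i+j+k)w = p - a - b - c$ forces $(i,j,k)$ into a small finite set, and one reorganizes the sum via the Leibniz rule $(\phi \psi)' = \phi' \psi + \phi \psi'$ together with the explicit formulas in \textsection\ref{defn-of-omega-sec}. The recurring phenomenon is that, after this reorganization, the terms with $(i,j,k) \neq (0,0,0)$ assemble into a total derivative plus a correction whose coefficient is divisible by $p$; this is the shadow in characteristic $p$ of the chain rule satisfied by the genuine logarithm in characteristic zero. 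Reducing back to $k$ via $R \to k$ then completes the argument.
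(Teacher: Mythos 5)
Your setup is sound and matches the paper's: expand $\sigma(\underline{e}^{\alpha t^a}\wedge \underline{e}^{\beta t^b}\wedge \underline{e}^{\gamma t^c})$ via Lemma \ref{lemma-repar-taylor2}, observe that only the triples $(i,j,k)$ with $a+b+c+(i+j+k)w=p$ survive under $\Omega^{(p)}$, and aim to exhibit the resulting finite sum as $d$ of something. But the proof stops exactly where the work begins. The entire content of the lemma is the construction of the primitive and the verification that its derivative equals the sum; the paper does this by proving the explicit identity (\ref{antider}), whose right-hand side is $d\bigl(\sum_{i+j+k=q}\frac{x^q}{q}S(a,b,c;i,j,k)\,\alpha^{(i)}\beta^{(j)}\gamma^{(k)}\bigr)$ with a combinatorial coefficient $S$ whose definition depends delicately on the ordering (and ties) among $a+iw$, $b+jw$, $c+kw$, and then checks the identity coefficient-by-coefficient (separately for the $x^{q-1}x'$-terms and the $x^q$-terms) across five cases. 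Your proposal replaces all of this with the assertion that ``after reorganization via the Leibniz rule the terms assemble into a total derivative plus a correction divisible by $p$.'' That sentence is a paraphrase of the conclusion, not an argument for it: nothing in the proposal identifies the antiderivative, and nothing verifies that the leftover correction is $p$-divisible. Since the identity genuinely fails in characteristic zero except in the simplest case (the paper's Case (i) is the only one where characteristic $p$ is not used; the other four cases rely on $qw\equiv -(a+b+c)\bmod p$), the $p$-divisibility of the correction term is precisely the nontrivial claim, and it cannot be obtained by a soft appeal to the chain rule.

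Two further points you would need to address even if you carried out the reorganization. First, when some of $a,b,c$ vanish the corresponding factor is a unit $f\in k((s))^{\times}$ and the symbol $\gamma^{(0)}$ (say) is undefined; one must check that the coefficient multiplying it in the putative primitive vanishes identically (the paper does this by showing $S(a,b,c;i,j,k)=0$ when $k=c=0$, using $p>2$). Second, the primitive involves $\frac{x^q}{q}$ with $q=\frac{p-(a+b+c)}{w}$, so one must confirm $q\not\equiv 0 \bmod p$ or treat that degenerate situation separately; your characteristic-zero lifting does not automatically dispose of either issue. As it stands the proposal is a correct plan with the decisive computation missing.
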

  Note that in this case of characteristic $p$ dilogarithm, we do not need to assume that $w\geq 2$ as we had to in characteristic 0  \cite{un-higher}. 

Let us first express (\ref{omegapsigma}) using Lemma \ref{lemma-repar-taylor2} above as: 
\begin{align}\label{omegapsigma-alt}
\Omega ^{(p)}(\sum _{0 \leq i, j, k<p \atop {0<i+j+k}}\underline{e}^{\frac{x^i
\alpha^{(i)}}{i!}t^{a+iw}}\wedge \underline{e}^{\frac{x^j
\beta^{(j)}}{j!}t^{b+jw}}\wedge \underline{e}^{\frac{x^k
\gamma^{(k)}}{k!}t^{c+kw}}).    
\end{align}

Recall that $\Omega^{(p)}(\underline{e}^{ut^m}\wedge \underline{e}^{vt^n} \wedge \underline{e}^{wt^k})=0,$ if $m+n+k \neq p.$ 
This implies that  (\ref{omegapsigma-alt}) is 0, if $w \nmid p-(a+b+c)$ or if $a+b+c=p.$  So let us assume that $w | p-(a+b+c)$ and let $q=\frac{p-(a+b+c)}{w}>0.$  We will prove Lemma \ref{exact-lemma}  by proving the explicit identity:  
\begin{align}\label{antider}
 \Omega ^{(p)}(\sum _{0 \leq i, j, k<p \atop {0<i+j+k}}\underline{e}^{\frac{x^i
\alpha^{(i)}}{i!}t^{a+iw}}\wedge \underline{e}^{\frac{x^j
\beta^{(j)}}{j!}t^{b+jw}}\wedge \underline{e}^{\frac{x^k
\gamma^{(k)}}{k!}t^{c+kw}})= d(\sum _{i+j+k=q \atop {0 \leq i,j,k}} \frac{x^q}{q}S(a,b,c;i,j,k)\alpha^{(i)}\beta ^{(j)}\gamma ^{(k)}),  
\end{align}
where $S(a,b,c;i,j,k)$ is defined as follows. We define 
$$
S(a, b, c;i,j,k):=\frac{bk-cj}{i!j!k!},
$$
if $a+iw>{\rm max}\{b+jw,c+kw \}$ or $b+jw=c+kw>a+iw. $ 
 
In case, $b+jw >{\rm max}\{a+iw,c+kw \}$ or $a+iw=c+kw>b+jw $ then we let 
$$
S(a, b, c;i,j,k):=-S(b,a,c;j,i,k).
$$

In case, $c+kw >{\rm max}\{a+iw,b+jw \}$ or $a+iw=b+jw>c+kw $ then we let
$$
S(a, b, c;i,j,k):=S(c,a,b;k,i,j).
$$
Since $i+j+k=q,$ we have $a+iw+b+jw+c+kw=p.$ This implies that  $a+iw=b+jw=c+kw$ is impossible since $p$ is a prime greater than 3.

Let us explain how the expression on the right hand side of (\ref{antider}) in fact makes sense.   Note that the expression  involves terms of the form $\alpha ^{(i)}\beta^{(j)}\gamma^{(k)}.$ On the other hand for $k=0,$ we defined $\gamma ^{(k)}$ only for $c>0.$ Therefore, we have to make sure that the coefficient $S(a,b,c;i,j,k)$ is 0, if $k=c=0.$ The same statement would be true for $j=b=0$ and $i=a=0,$ since  
\begin{align}\label{anti-symmetry-def}
    S(\sigma_{1}(a),\sigma_{1}(b),\sigma_{1}(c);\sigma_{2}(i),\sigma_{2}(j),\sigma_{2}(k))={\rm sign}(\sigma)S(a,b,c;i.j,k),
\end{align}
for $\sigma$ in the group of permutations of $\{(a,i), (b,j),(c,k) \}$ and $\sigma_{i}$ is the $i$-th component of $\sigma,$ for $i=1,\,2.$  If $k=c=0,$ then $(a+iw)+(b+jw)=p.$ Since $p>2,$ we then have $a+iw \neq b+jw.$ In case,  $a+iw>b+jw$ then $S(a,b,c;i,j,k)=\frac{bk-cj}{i!j!k!}=0.$ In case, $b+jw>a+iw,$ the same follows again by (\ref{anti-symmetry-def}).

\subsection{The proof of Lemma \ref{exact-lemma}}

We will instead prove the identity (\ref{antider}), which gives a more precise statement than the lemma.
\subsubsection{The coefficients of $x^{q-1}x'\alpha^{(i)}\beta^{(j)}\gamma^{(k)}$ } Suppose that $i+j+k=q.$ The coefficient of  $x^{q-1}x'\alpha^{(i)}\beta^{(j)}\gamma^{(k)}$ in the right side of  (\ref{antider}) is $S(a,b,c;i,j,k).$ We need to check that this is the same as its coefficient in the left side of (\ref{antider}). 

Suppose first that $a+iw>{\rm max}\{b+jw,c+kw \}$ or $b+jw=c+kw>a+iw. $  The term in (\ref{omegapsigma}) that contributes to this coefficient is 
$$
\frac{x^i}{i!}\alpha^{(i)}((b+jw)\frac{x^j}{j!}\beta^{(j)}\frac{kx^{k-1}x'}{k!}\gamma^{(k)}-(c+kw)\frac{x^k}{k!}\gamma^{(k)}\frac{jx^{j-1}x'}{j!}\beta^{(j)})=x^{q-1}x'\alpha^{(i)}\beta^{(j)}\gamma^{(k)}\frac{bk-cj}{i!j!k!},
$$
whose coefficient is precisely 
$S(a,b,c;i,j,k).$ The other cases follow from this one by the anti-symmetry of $\Omega^{(p)}$ and $S$.

\subsubsection{The coefficients of  $\frac{x^{q}}{i! j! k!}\alpha^{(i)}\beta^{(j)}\gamma^{(k)}$} We need to check that  on both sides of (\ref{antider}) the coefficients of $\frac{x^{q}}{i! j! k!}\alpha^{(i)}\beta^{(j)}\gamma^{(k)}$ are the same.  Let us temporarily write $\tilde{a}:=a+iw,$ $\tilde{b}:=b+jw,$ and $\tilde{c}:=c+kw$ with $i+j+k=q+1$ and let $\tilde{a}':=\tilde{a}-w,$ $\tilde{b}':=\tilde{b}-w$  and $\tilde{c}':=\tilde{c}-w.$ There are many cases to consider and we will only write down the answer in each case. The computations are routine but tedious and are omitted. Without loss of generality, we will assume that $\tilde{a} \geq \tilde{b} \geq \tilde{c}.$ 

{\bf Case (i).} If one of the following cases hold: 
$$
\tilde{a} > \tilde{a}'>\tilde{b} > \tilde{b}'>\tilde{c}>\tilde{c}',\;\;\; \tilde{a} > \tilde{a}'>\tilde{b} > \tilde{c}>\tilde{b}'>\tilde{c}',\;\;\; \tilde{a} > \tilde{a}'>\tilde{b} > \tilde{b}'=\tilde{c}>\tilde{c}',
$$
$$
 \tilde{a} > \tilde{a}'>\tilde{b} = \tilde{c}>\tilde{b}'=\tilde{c}',\;\;\; \tilde{a} > \tilde{b}=\tilde{c} > \tilde{a}'>\tilde{b}'=\tilde{c}'
$$
then the coefficient on the left side of (\ref{antider}) is 
$$
i\cdot 0+(-1)j\cdot (c+kw)+k\cdot (b+jw)=kb-jc
$$
and the coefficient 
on the right side of (\ref{antider}) is 
$$
\frac{1}{q}(i(kb-jc)+j(kb-(j-1)c)+k((k-1)b)-jc)=\frac{i+j+k-1}{q}(kb-jc)=kb-jc.
$$
Therefore the coefficients on both sides of the identity match. Note that the assumption that the characteristic is $p$ is not needed in this case. 

{\bf Case (ii).} If one of the following cases hold: 
$$
\tilde{a} > \tilde{b}>\tilde{c} > \tilde{a}'>\tilde{b}'>\tilde{c}',\;\;\; \tilde{a} > \tilde{b}>\tilde{a}' > \tilde{c}>\tilde{b}'>\tilde{c}',\;\;\; \tilde{a} > \tilde{b}>\tilde{a}' > \tilde{b}'>\tilde{c}>\tilde{c}',
$$
$$
 \tilde{a} > \tilde{b}>\tilde{a}' > \tilde{b}'=\tilde{c}>\tilde{c}',\;\;\; \tilde{a} > \tilde{b}>\tilde{c} = \tilde{a}'>\tilde{b}'>\tilde{c}'
$$
then the coefficient on the left side of (\ref{antider}) is 
$$
i(c+kw)+(-1)j\cdot (c+kw)+k\cdot (b+jw)=ic+ikw+bk-jc.
$$
Putting first $i=q+1-j-k$ and then noting that $qw=-a-b-c$ in characteristic $p$ this can be rewritten as 
$$
qc+c-2jc-2kc-ka+kw-jkw-k^2w.
$$

The coefficient on the right side of (\ref{antider}) is 
$$
\frac{1}{q}(i((i-1)c-ka)-j((j-1)c-kb)+k((k-1)b-jc)).
$$
Again first putting $q+1-j-k$ instead of $i$ and then using $-w=\frac{a+b+c}{q},$ we see that this expression matches the one above.

{\bf Case (iii).} If one of the following cases hold: 
$$
\tilde{a} = \tilde{b}>\tilde{a}' = \tilde{b}'>\tilde{c}>\tilde{c}',\;\;\; \tilde{a} = \tilde{b}>\tilde{c} > \tilde{a}'=\tilde{b}'>\tilde{c}',\;\;\; \tilde{a} = \tilde{b}>\tilde{a}' = \tilde{b}'=\tilde{c}>\tilde{c}',
$$
then the coefficient on the left side of (\ref{antider}) is 
$$
-i(-(c+kw))+j( -(c+kw))+k 0=ic-jc+ikw-jkw.
$$
Proceeding exactly as in the previous case, this can be rewritten as 
$$
qc+c-2jc-2kc-ka-kb+kw-2jkw-k^2w.
$$

The coefficient on the right side of (\ref{antider})  is 
$$
\frac{1}{q}(-i(ka-(i-1)c)+j(kb-(j-1)c)+k(ja-ib)).
$$
This matches the above expression after first we put $i=q+1-j-k,$ then replace  $a+b+c$ with $-qw.$

{\bf Case (iv).} If one of the following cases hold: 
$$
\tilde{a} > \tilde{a}'=\tilde{b} > \tilde{c}>\tilde{b}'>\tilde{c}',\;\;\; \tilde{a} > \tilde{a}'=\tilde{b} > \tilde{b}'>\tilde{c}>\tilde{c}',\;\;\; \tilde{a} > \tilde{a}'=\tilde{b} > \tilde{b}'=\tilde{c}>\tilde{c}',
$$
then the coefficient on the left side of (\ref{antider}) is
$$
-i(b+jw)-j( c+kw)+k (b+jw)=-2ib-jc+qb+b-jb-ijw.
$$

The coefficient on the right side of (\ref{antider}) is 
$$
\frac{1}{q}(i(ja-(i-1)b)-j((j-1)c-kb)+k((k-1)b-jc)).
$$
Replacing $k$ with $q+1-i-j$ and proceeding as above we see that this matches the above expression. 

{\bf Case (v).} If  
$
\tilde{a} = \tilde{b}=\tilde{c} > \tilde{a}'=\tilde{b}'=\tilde{c}'$ then  the coefficient on the left side of (\ref{antider}) is 0. In this case, the coefficient on the right side of (\ref{antider}) is 
$$
\frac{1}{q}(i(kb-jc)-j(ka-ic)+k(ja-ib))=0.
$$

Finally, let us show that the above cases cover all possibilities. First, note that with our notation, we have 
$$
\tilde{a}'+\tilde{b}+\tilde{c}=\tilde{a}+\tilde{b}'+\tilde{c}=\tilde{a}+\tilde{b}+\tilde{c}'=p.
$$ 
Therefore, the cases $\tilde{a}'=\tilde{b}=\tilde{c}$ or $\tilde{a}=\tilde{b}'=\tilde{c}$ or $\tilde{a}=\tilde{b}=\tilde{c}'$ are eliminated since $p$ is a prime greater than 3. Note that by our assumptions $\tilde{a} \geq \tilde{b} \geq \tilde{c},$  $\tilde{a}> \tilde{a}',$  $\tilde{b}> \tilde{b}',$ and $\tilde{c}> \tilde{c}'.$  

(a) if $\tilde{a}=\tilde{b}=\tilde{c},$ then this is covered by case (v). 

(b) if $\tilde{a}=\tilde{b}>\tilde{c},$ then this is covered by case (iii)

(c)  if $\tilde{a}>\tilde{b}=\tilde{c},$ then either $\tilde{b}=\tilde{c}>\tilde{a}'$ or $\tilde{a}'>\tilde{b}=\tilde{c}$ and both possibilities are covered by case (i).

For the remainder of the discussion we assume that $\tilde{a} > \tilde{b} > \tilde{c}.$ The possibilities depend on the location of $\tilde{a}'$ and $\tilde{b}'.$ Note that $\tilde{a}'>\tilde{b}'>\tilde{c}'.$ 

(d) if $\tilde{c}\geq\tilde{a}',$ then this is covered by case (ii)

(e) if $\tilde{b}>\tilde{a}'>\tilde{c},$ then either $\tilde{c}> \tilde{b}'$  or $\tilde{c}= \tilde{b}'$ or $\tilde{b}'> \tilde{c}$ and these possibilities are  covered by case (ii)
 
(f) if $\tilde{b}=\tilde{a}'$ then either $\tilde{c}>\tilde{b}'$ or $\tilde{c}=\tilde{b}'$ or $\tilde{b}'>\tilde{c}$ and all  possibilities are covered by case  (iv)

(g) if $\tilde{a}'>\tilde{b}$ then either  $\tilde{c}>\tilde{b}'$ or $\tilde{c}=\tilde{b}'$ or $\tilde{b}'>\tilde{c}$ and and all  possibilities are covered by case  (i).
\hfill $\Box$

 \subsection{The invariance of the residues of $\Omega ^{(p)}$} 
 We will next prove the invariance with respect to reparametrization. 
 
 \begin{proposition}\label{invariance-prop}
 Suppose that $k$ is a field of characteristic $p,$ and $\sigma$ is a continuous $k_{\infty}$-automorphism of $k((s))_{\infty}$ which is identity modulo the ideal $(t).$ Then we have the equality of the residues: 
 $$
 res_{s=0} \Omega ^{(p)}(\sigma (\underline{e}^{\alpha t^a}\wedge \underline{e}^{\beta  t^b}\wedge \underline{e}^{\gamma t^c}) )=res_{s=0} \Omega ^{(p)}( \underline{e}^{\alpha t^a}\wedge \underline{e}^{\beta  t^b}\wedge \underline{e}^{\gamma t^c} ) \in k,
 $$
 for $a,b,c \geq 0$ and $\alpha, \beta , \gamma \in k((s)).$
 \end{proposition}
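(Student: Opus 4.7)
The plan is to reduce the general case to the case already handled by Lemma \ref{exact-lemma} by decomposing $\sigma$ as a composition of elementary reparametrizations, then using a telescoping identity together with the multilinearity of $\Omega^{(p)}$.

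First I would use property (i) in the definition of $\Omega^{(p)}$ (\textsection \ref{defn-of-omega-sec}) to observe that $\Omega^{(p)}$ factors through $\Lambda^{3}k((s))_{p}^{\times}$; hence only the image of $\sigma$ in $\mathrm{Aut}_{k_p}(k((s))_{p})$ matters. Since $\sigma$ is identity modulo $(t)$, we have $\sigma(s)=s+x_{1}t+x_{2}t^{2}+\cdots+x_{p-1}t^{p-1} \pmod{(t^{p})}$ for some $x_{w}\in k((s))$. I would then show inductively on $1\le w_{0}\le p-1$ that after setting $\tau_{w}(s):=s+y_{w}t^{w}$ for suitable $y_{w}\in k((s))$, one has
\[
\tau_{w_{0}}^{-1}\circ\tau_{w_{0}-1}^{-1}\circ\cdots\circ\tau_{1}^{-1}\circ\sigma\equiv\mathrm{id}\pmod{(t^{w_{0}+1})}.
\]
The inductive step is a short computation: if $\sigma_{w_{0}}:=\tau_{w_{0}-1}^{-1}\circ\cdots\circ\tau_{1}^{-1}\circ\sigma$ is identity modulo $(t^{w_{0}})$ with $\sigma_{w_{0}}(s)=s+y_{w_{0}}t^{w_{0}}+O(t^{w_{0}+1})$, then one checks that $\tau_{w_{0}}^{-1}(u)=u-y_{w_{0}}(u)t^{w_{0}}+O(t^{w_{0}+1})$, from which $\tau_{w_{0}}^{-1}\circ\sigma_{w_{0}}\equiv\mathrm{id}\pmod{(t^{w_{0}+1})}$. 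Taking $w_{0}=p-1$ gives the decomposition $\sigma\equiv\tau_{1}\circ\tau_{2}\circ\cdots\circ\tau_{p-1}\pmod{(t^{p})}$ with each $\tau_{w}$ of the exact shape covered by Lemma \ref{exact-lemma}.

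Next I would telescope: writing $\omega:=\underline{e}^{\alpha t^{a}}\wedge\underline{e}^{\beta t^{b}}\wedge\underline{e}^{\gamma t^{c}}$ and $\omega_{w}:=\tau_{w+1}\circ\cdots\circ\tau_{p-1}(\omega)$ (with $\omega_{p-1}=\omega$), we get
\[
\Omega^{(p)}(\sigma(\omega))-\Omega^{(p)}(\omega)=\sum_{w=1}^{p-1}\bigl(\Omega^{(p)}(\tau_{w}(\omega_{w}))-\Omega^{(p)}(\omega_{w})\bigr).
\]
By the product formula of Lemma \ref{lemma-repar-taylor2}, each $\tau_{v}$ sends a generator of the form $\underline{e}^{\mu t^{n}}$ to a finite product of such generators; thus, expanding using the multiplicativity of $\wedge$ in $\Lambda^{3}k((s))_{p}^{\times}$, each $\omega_{w}$ becomes a finite $k$-linear combination of terms of the same shape $\underline{e}^{\alpha' t^{a'}}\wedge\underline{e}^{\beta' t^{b'}}\wedge\underline{e}^{\gamma' t^{c'}}$. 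Applying Lemma \ref{exact-lemma} to each such summand separately shows that every bracket $\Omega^{(p)}(\tau_{w}(\omega_{w}))-\Omega^{(p)}(\omega_{w})$ is a finite sum of exact forms in $\Omega^{1}_{k((s))/k}$, hence itself exact. Since exact forms on $k((s))$ have vanishing residue at $s=0$, the proposition follows.

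The main obstacle I foresee is the bookkeeping in the telescoping step: the intermediate objects $\omega_{w}$ are not simple wedges of generators but linear combinations of them, and one needs the multiplicativity of $\wedge$ together with Lemma \ref{lemma-repar-taylor2} to reduce each summand to the precise hypothesis of Lemma \ref{exact-lemma}. Everything else is essentially formal: the inductive factorization of $\sigma$ modulo $(t^{p})$, the telescoping identity, and the observation that residues kill exact forms.
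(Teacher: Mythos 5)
Your proposal is correct and follows essentially the same route as the paper: reduce modulo $(t^{p})$, factor $\sigma$ into elementary automorphisms $\tau_{w}(s)=s+y_{w}t^{w}$, and apply Lemma \ref{exact-lemma} together with multilinearity to conclude that each telescoping difference is exact and hence has vanishing residue. The paper states the factorization and the extension from generators to arbitrary elements more tersely, but the underlying argument is identical to yours.
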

 
 \begin{proof}
Let us write $\sigma(s)=s+\sum_{1\leq w}x_w t^w,$ with $x_{w} \in k((s))).$ If we let $\tau$ be the automorphism of $k((s))_{\infty}$ given by $\tau(s)=s+\sum _{1\leq w<p}x_{w}t^w,$ since $\sigma$ and $\tau $ are congruent modulo $(t^p),$ 
$$
\Omega ^{(p)}(\sigma(\underline{e}^{\alpha t^a}\wedge \underline{e}^{\beta  t^b}\wedge \underline{e}^{\gamma t^c}))=\Omega ^{(p)}(\tau(\underline{e}^{\alpha t^a}\wedge \underline{e}^{\beta  t^b}\wedge \underline{e}^{\gamma t^c})) .
$$
Letting similarly $\tau_{w}$ defined by $\tau_w(s)=s+y_{w}t^w$ for $1 \leq w<p,$ we notice that $\tau_{w}$ are of the form as in the statement of Lemma \ref{exact-lemma}. This lemma then implies that for any $q \in \Lambda ^{3} k((s))_{\infty} ^{\times},$ 
$$
res_{s=0}(\Omega^{(p)}(\tau_{w}(q)))=res_{s=0}(\Omega ^{(p)}(q)).
$$
Choosing $y_{w} \in k((s))$ appropriately for $1\leq w<p,$ we can write  $\tau=\tau_{p-1}\circ \cdots \tau_{2}\circ \tau_{1}$ and applying the last equality $(p-1)$-times gives us the equality: 
$$
res_{s=0} \Omega ^{(p)}(\tau (\underline{e}^{\alpha t^a}\wedge \underline{e}^{\beta  t^b}\wedge \underline{e}^{\gamma t^c}) )=res_{s=0} \Omega ^{(p)}( \underline{e}^{\alpha t^a}\wedge \underline{e}^{\beta  t^b}\wedge \underline{e}^{\gamma t^c} ),
$$
from which the proposition follows. 
\end{proof}

 \subsection{The definition of the residue of  $ {\omega}^{(p)}$ for a  pair of  smooth algebras of dimension 1 over $k_p$}\label{section definition omega}
Using the previous section, we generalize the definition of $\omega^{(p)}$ to pairs of elements in smooth algebras of dimension 1 over $k_p$ with the same reduction. As opposed to the split algebra case, which was considered in  \textsection  \ref{split-def}, the general case will essentially depend on   Proposition \ref{invariance-prop} above. 

First, we note that if $\pazocal{R}$ is a smooth $k_{n}$-algebra then it is (non-canonically) split. Even though this is standard, we provide a proof since we could not find an appropriate reference. 
\begin{lemma}\label{smooth-split}
Suppose that $\pazocal{R}$ is a smooth $k_{n}$-algebra and $\underline{\pazocal{R}}:=\pazocal{R}/(t).$  There is a, not necessarily unique, $k_{n}$-algebra isomorphism from $\pazocal{R}$ to $\underline{\pazocal{R}}_{n},$ which reduces to the identity map modulo $(t).$ 
\end{lemma}

\begin{proof}
Since $\pazocal{R}$ is smooth over $k_n,$ it is formally smooth. This implies that since $(t)$ is a nilpotent ideal  in $\underline{\pazocal{R}}_n$ and $\underline{\pazocal{R}}=\underline{\pazocal{R}}_n/(t),$ a map $f$ exists in the following  diagram 
\[
\begin{tikzcd}
\pazocal{R} \arrow[dr, dashed, "f"]  \arrow[r] &\underline{\pazocal{R}}  \\
k_{n} \arrow[u] \arrow[r] &\underline{\pazocal{R}}_n \arrow[u] 
\end{tikzcd}
\]
that makes it commute. This commutativity implies that $f(t)=t$ and the reduction of $f$ modulo $(t)$ is the identity map. Let us show that $f$ has to be an isomorphism. 

Assume that $f$ is an isomorphism modulo $(t^i)$ for $1\leq i < n.$ If $f(r) \in (t^{i+1})$ then by assumption $r \in (t^i).$ Let us write $r=st^{i}$ with $s \in \pazocal {R}.$ Then $f(s)t^i \in (t^{i+1})$ implies that $f(s) \in (t).$ This in turn implies that $s$ is in  $(t),$ since $f$ is an isomorphism modulo $(t).$ Combining, we obtain $r=st^i \in (t^{i+1}).$ Therefore, $f$ modulo $(t^{i+1})$ is injective. 

Let us show that $f$ modulo $(t^{i+1})$ is surjective.  Given $b \in \underline{\pazocal{R}}_n,$ by assumption, there is $a \in \pazocal{R}$ such that $b-f(a) \in (t^i).$ Let us put $b-f(a)=st^i,$ with $s \in \underline{\pazocal{R}}_n.$ Since $f$ is an isomorphism modulo $(t),$ there is a $\varepsilon \in \pazocal{R}$ such tat $s-f(\varepsilon) \in (t).$ Combining we obtain that $b-f(a+\varepsilon t^i) \in (t^{i+1}).$ 

Together, these show that $f$ is an isomorphism modulo $(t^{i+1}).$ The result follows by induction on $i.$ 
\end{proof}

Let $\pazocal{R}$ be a smooth $k_p$-algebra of relative dimension 1 as above. Let $\eta$ be the generic point and $x$ be a closed point of the spectrum of $\pazocal{R}.$ Then we define a  map 
$$
res_{x}\omega ^{(p)}:\Lambda ^3 (\pazocal{R}_{\eta},(t))^{\times}\to k',
$$
where $k'$ is the residue field of $x,$ as follows. To ease the notation let us write $\pazocal{S}=\pazocal{R}_{\eta}.$  Since $\pazocal{R}/k_p$ is smooth, there is an isomorphism $\varphi:\pazocal{S} \to \underline{\pazocal{S}}_p$ of $k_p$-algebras which is the identity map  modulo $(t)$ by Lemma \ref{smooth-split}. Here $\underline{\pazocal{S}}$ denotes the reduction of $\pazocal{S}$ modulo $(t)$ and $\underline{\pazocal{S}}_p:=\underline{\pazocal{S}}[t]/(t^p).$ 

The map $\varphi$ then induces an isomorphism 
$$
\Lambda ^{3}\varphi: \Lambda ^3 (\pazocal{S},(t))^{\times} \to \Lambda ^3 (\underline{\pazocal{S}}_p,(t))^{\times},
$$
by transport of structure, 
whose composition with 
$$
\tilde{\Omega }^{(p)}:\Lambda ^3 (\underline{\pazocal{S}}_p,(t))^{\times}\to \Omega ^{1} _{\underline{\pazocal{S}}},
$$
will be denoted by $\omega^{(p)} _{\varphi}: \Lambda ^3 (\pazocal{S},(t))^{\times} \to \Omega ^{1} _{\underline{\pazocal{S}}}.$ Taking the projection to $\Omega^{1}_{\underline{S}/k}$ and taking residue at the closed point $x$ gives us the map $res_{x}\omega^{(p)} _{\varphi}.$ The next lemma, which shows that the residue is independent of the choice of $\varphi,$ is essential. 

\begin{lemma}\label{lemma-indep-res}
If $\varphi$ and $\psi$ are two isomorphisms between the $k_p$-algebras  $\pazocal{S}$ and $\underline{S}_{p}$ then $$res_{x}\omega^{(p)} _{\varphi}=res_{x}\omega^{(p)} _{\psi}.$$ 
\end{lemma}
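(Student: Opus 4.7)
The plan is to reduce the assertion to Proposition \ref{invariance-prop} by passing to the formal completion of $\underline{\pazocal{S}}$ at $x$. Set $\sigma := \psi \circ \varphi^{-1}$, a $k_p$-algebra automorphism of $\underline{\pazocal{S}}_p$ which is the identity modulo $(t)$. Unwinding the definitions of $\omega^{(p)}_\varphi$ and $\omega^{(p)}_\psi$ reduces the lemma to showing that
$$
res_x\,\tilde{\Omega}^{(p)}(\Lambda^3\sigma(\zeta)) = res_x\,\tilde{\Omega}^{(p)}(\zeta)
$$
for every $\zeta \in \Lambda^3(\underline{\pazocal{S}}_p,(t))^\times$.

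Since the residue at $x$ is a local invariant, the next step is to pass to the completion. Write $K:=\underline{\pazocal{S}}$ and fix an identification $\hat{K}_x\simeq k'((s))$, where $k'=k(x)$ is the residue field and $s$ is the image of an element of $\pazocal{R}_x$ whose reduction modulo $(t)$ is a uniformizer at $x$. The natural map $\Omega^1_{K/k}\to\Omega^1_{k'((s))/k}$ respects residues, and $\tilde{\Omega}^{(p)}$ is compatible with the inclusion $K_p\hookrightarrow k'((s))_p$. It therefore suffices to extend $\sigma$ to a $k_p$-automorphism $\hat{\sigma}$ of $k'((s))_p$ that is identity modulo $(t)$, restricts to $\sigma$ on $K_p\subseteq k'((s))_p$, and then to verify the invariance of the residue at $s=0$. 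For the extension: since $\underline{\pazocal{R}}/k$ is smooth of relative dimension one, $K$ is separable algebraic over $k(s)$, and so $\sigma$ is completely determined by $\sigma(s)=s+\sum_{w=1}^{p-1}y_w t^w$ with $y_w\in K$. Indeed, for $\alpha\in K$ with separable minimal polynomial $P\in k(s)[X]$, the element $\sigma(\alpha)$ is the unique root of $P^{\sigma}$ in $K_p$ reducing to $\alpha$ modulo $(t)$, produced by Hensel's lemma in the $t$-adically complete ring $K_p$. Running the same Hensel construction inside the $t$-adically complete $k'((s))_p$ yields $\hat{\sigma}$, which extends $\sigma$ by uniqueness.

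Writing $\hat{\sigma}(s)=s+\sum_{w=1}^{p-1}y_w t^w$ with $y_w\in k'((s))$, the next step is to factor $\hat{\sigma}=\hat{\tau}_{p-1}\circ\cdots\circ\hat{\tau}_1$ as a composition of elementary reparametrizations $\hat{\tau}_w(s)=s+z_w t^w$ with $z_w\in k'((s))$, solving recursively for the $z_w$'s. Each $\hat{\tau}_w$ is precisely of the type treated by Lemma \ref{exact-lemma}, whose proof is insensitive to the base field and works verbatim with $k$ replaced by $k'$. Consequently, $\tilde{\Omega}^{(p)}(\Lambda^3\hat{\tau}_w(\zeta))-\tilde{\Omega}^{(p)}(\zeta)$ is exact in $\Omega^1_{k'((s))/k}$ and has vanishing residue at $s=0$. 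Composing the individual invariances, exactly as in the proof of Proposition \ref{invariance-prop}, yields $res_{s=0}\tilde{\Omega}^{(p)}(\Lambda^3\hat{\sigma}(\zeta))=res_{s=0}\tilde{\Omega}^{(p)}(\zeta)$, which is the identity required.

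The principal technical difficulty is producing $\hat{\sigma}$ and verifying its compatibility with $\sigma$ under the embedding $K_p\hookrightarrow k'((s))_p$, so that the two a priori different evaluations of $\tilde{\Omega}^{(p)}$ — one in $K_p$ and one in $k'((s))_p$ — agree after passing to residues. Everything else is a reapplication of the reparametrization invariance established in Lemma \ref{exact-lemma} and Proposition \ref{invariance-prop}.
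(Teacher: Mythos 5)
Your proof is correct and follows essentially the same route as the paper: reduce to the completed local ring $k'((s))_p$ and invoke the reparametrization invariance of the residue of $\Omega^{(p)}$ (Proposition \ref{invariance-prop}, which rests on Lemma \ref{exact-lemma}). The only differences are organizational --- you work with the single automorphism $\sigma=\psi\circ\varphi^{-1}$ where the paper applies the invariance to $\varphi$ and $\psi$ separately to identify each residue with a $\varphi$-independent expression, and you make explicit (via Hensel's lemma) the extension of the automorphism to the completion, a point the paper passes over silently.
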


\begin{proof}
Let $\underline{\hat{\pazocal{S}}}$ denote the completion of $\underline{S} $ at the valuation determined by $x.$ Then $\underline{\hat{\pazocal{S}}}$ is the field of fractions of the discrete valuation ring $(\underline{\hat{\pazocal{S}}})^{\circ}$ obtained by completing $\underline{\pazocal{R}}$ at $x.$ Since $(\underline{\hat{\pazocal{S}}})^{\circ}$ contains a field, the structure theorem for complete local rings \cite[Theorem 28.3, \textsection 29]{mat} imply that, if $k'$ is the residue field of $(\underline{\hat{\pazocal{S}}})^{\circ}$ and $s$ is a uniformizer, then there is a surjection from $k'[[s]] $ to $(\underline{\hat{\pazocal{S}}})^{\circ}.$ The kernel of this map has to be 0 since the quotient does not have nilpotent elements. Passing to the field of fractions, we obtain a  $k$-isomorphism $k'((s))) \simeq \underline{\hat{\pazocal{S}}} .$ By Lemma \ref{smooth-split}, we also know that there is a $k_p$-isomorphism $\pazocal{S}\simeq \underline{\pazocal{S}}_p$ which is identity modulo $(t).$ Therefore, replacing $\underline{\pazocal{S}}$ by $\underline{\pazocal{S}}_p$ and then completing at $x,$ we are reduced to proving the lemma for $\pazocal{S}=k'((s))_p$ and $\varphi$ and $\psi$ two $k_p$-automorphisms of $k'((s))_{p}$ which reduce to the identity automorphism modulo $(t).$ The smoothness assumption implies that  $k'/k$ is separable. Since $\varphi$ and $\psi$ are $k$-algebra morphisms which reduce to identity modulo $(t),$ the separability of $k'/k$ implies that  they are  also $k'$-algebra morphisms. 

Suppose that $(\tilde{f},\hat{f}), (\tilde{g}, \hat{g}), (\tilde{h},\hat{h} ) \in (k'((s))_{p},(t))^{\times}.$ Then  $res_{x}\omega^{(p)} _{\varphi}((\tilde{f},\hat{f})\wedge(\tilde{g}, \hat{g})\wedge(\tilde{h},\hat{h} ))$ is equal to the residue of 
\begin{align}\label{diff-res-form}
\Omega ^{(p)}(\varphi(\tilde{f})\wedge \varphi(\tilde{g}) \wedge \varphi(\tilde{h}) )-\Omega ^{(p)}(\varphi(\hat {f})\wedge \varphi(\hat{g}) \wedge \varphi(\hat{h}) )
\end{align}
at $s=0.$ Note that the proof of  Proposition \ref{invariance-prop} works without modification when $\sigma$ is a continuous $k_p$-automorphism of $k((s))_p,$ which is identity modulo $(t).$ Since  $\varphi$ is obtained by completing an automorphism at $x,$ it is continuous and hence satisfies the hypotheses of Proposition \ref{invariance-prop}. Therefore,   the residue of
\begin{align}\label{residue-same}
    \Omega ^{(p)}(\tilde{f}\wedge \tilde{g} \wedge \tilde{h} )-\Omega ^{(p)}(\varphi(\tilde{f})\wedge \varphi(\tilde{g}) \wedge \varphi(\tilde{h}) )
\end{align}
at $s=0,$ is equal to $0.$ Since a similar formula is true for the second term in (\ref{diff-res-form}), we can rewrite the residue of this expression as the residue of 
$$
\Omega ^{(p)}(\tilde{f}\wedge \tilde{g} \wedge \tilde{h} )-\Omega ^{(p)}(\hat{f}\wedge \hat{g} \wedge \hat{h} ),
$$
which of course does not depend on $\varphi.$ 
\end{proof}

Since, as we have seen, $res_{x}\omega^{(p)} _{\varphi}$ does not depend on $\varphi,$ we will denote it by $res_{x}\omega^{(p)} .$ Below, we will need a variant of this construction for pairs of algebras. Next we describe this.

Suppose that  $A$ is a ring with an ideal $I$ and $B$ and $B'$ are two $A$-algebras together with an isomorphism $\chi: B/IB \simeq B'/IB'$ of $A$-algebras. We let 
$$
(B,B',\chi)^{\times}:=\{(p,p')| p \in B^{\times} \;{\rm and} \;p' \in B'^{\times} \; {\rm s.t.}\; \chi(p|_I)=p'|_I \},
$$
where $p|_{I}$ denotes the image of $p$ in $(B/IB)^{\times}.$ 

Suppose that $\pazocal{R}$ and $\pazocal{R}'$ are  smooth $k_{p}$-algebras of relative dimension 1, together with a $k$-isomorphism $\chi:\underline{\pazocal{R}} \to \underline{\pazocal{R}}'.$ We identify the spectra of $\pazocal{R}$ and $\pazocal{R}'$ via this isomorphism. Suppose that  $x$ is a closed point and $\eta$ is the generic point of this spectrum. We would like to construct a map 
$$
res_{x}\omega^{(p)} _{\chi_{\eta}}:\Lambda ^3 (\pazocal{R}_{\eta},\pazocal{R}'_{\eta},\chi_{\eta})^{\times} \to k',
$$
where $k'$ is the residue field of $x.$ Let us put, as above, $\pazocal{S}:=\pazocal{R}_{\eta}$ and $\pazocal{S}':=\pazocal{R}'_{\eta}.$ Since, as above, $\pazocal{S}\simeq \underline{\pazocal{S}}_{p}$ and $\pazocal{S}'\simeq \underline{\pazocal{S}}'_{p}$ we can choose an isomorphism $\tilde{\chi}_{\eta}: \pazocal{S} \to \pazocal{S}'$ which lifts $\chi_{\eta}.$ The map $\tilde{\chi}_{\eta},$ induces an isomorphism
$$
\xymatrix{
(\pazocal{S},\pazocal{S}',\chi_{\eta})^{\times}  \ar^{\tilde{\chi}_{\eta}^{*}}[r] & (\pazocal{S},(t))^{\times}}.
$$
Composing these, we obtain 
$$
\xymatrix{
res_{x}\omega^{(p)} _{\tilde{\chi}_{\eta}}:\Lambda ^3 (\pazocal{R}_{\eta},\pazocal{R}'_{\eta},\chi_{\eta})^{\times} \ar^{\;\;\;\;\;\;\;\;\;\;\;\;\;\;\Lambda^3\tilde{\chi}_{\eta}^{*}}[r]& \Lambda ^{3} (\pazocal{R}_{\eta},(t))^{\times}  \ar^{\;\;\;\;\;\;\;\;\;\;\;\;res_x\tilde{\Omega}^{(p)}}[r]& k'.}
$$

\begin{lemma}\label{lemma-indep-reduced}
The map $res_{x}\omega^{(p)} _{\tilde{\chi}_{\eta}}$ does not depend on the choice of the lifting $\tilde{\chi}_{\eta}$ of $\chi_{\eta}.$
\end{lemma}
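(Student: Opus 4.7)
The plan is to reduce the statement to Proposition \ref{invariance-prop}, following the same pattern as the proof of Lemma \ref{lemma-indep-res}: two lifts of $\chi_{\eta}$ will differ by a $k_{p}$-algebra self-automorphism of $\pazocal{S}$ that is the identity modulo $(t)$, and the residues of $\Omega^{(p)}$ are insensitive to such automorphisms.

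First I would pick two lifts $\tilde{\chi}_{\eta}$ and $\tilde{\chi}'_{\eta}$ of $\chi_{\eta}$ and set $\sigma:=\tilde{\chi}_{\eta}^{-1}\circ \tilde{\chi}'_{\eta}\colon \pazocal{S}\to \pazocal{S}$. Since both lifts reduce to $\chi_{\eta}$ modulo $(t)$, the map $\sigma$ is a $k_{p}$-algebra automorphism of $\pazocal{S}$ that is the identity modulo $(t)$; equivalently, $\tilde{\chi}^{\prime -1}_{\eta}=\sigma^{-1}\circ \tilde{\chi}_{\eta}^{-1}$. Next I would unwind the definitions of $\Lambda^{3}\tilde{\chi}_{\eta}^{*}$ and of the map $s$ used in the construction of $\tilde{\Omega}^{(p)}$: for a triple $\tilde{p}=(p_{1},p_{2},p_{3})$ in $\Lambda^{3}(\pazocal{S},\pazocal{S}',\chi_{\eta})^{\times}$ with $p_{i}=(a_{i},b_{i})$, the first-component triples produced by the two pullbacks coincide and so cancel in the difference, while the second-component triples are related by the twist $\sigma^{-1}$. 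Setting $q_{i}:=\tilde{\chi}_{\eta}^{-1}(b_{i})\in \pazocal{S}^{\times}$, I would thus rewrite
$$
res_{x}\omega^{(p)}_{\tilde{\chi}'_{\eta}}(\tilde{p}) - res_{x}\omega^{(p)}_{\tilde{\chi}_{\eta}}(\tilde{p}) = res_{x}\Omega^{(p)}(q_{1}\wedge q_{2}\wedge q_{3}) - res_{x}\Omega^{(p)}\bigl(\sigma^{-1}(q_{1})\wedge \sigma^{-1}(q_{2})\wedge \sigma^{-1}(q_{3})\bigr),
$$
reducing the lemma to the vanishing of the right-hand side.

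Finally, as in the proof of Lemma \ref{lemma-indep-res}, I would complete at $x$: by the Cohen structure theorem $\underline{\hat{\pazocal{S}}}_{x}\simeq k'((s))$, and the smoothness of $\pazocal{S}/k_{p}$ produces an identification $\hat{\pazocal{S}}_{x}\simeq k'((s))_{p}$ which is the identity modulo $(t)$. Under this identification $\sigma^{-1}$ becomes a continuous $k_{p}$-automorphism of $k'((s))_{p}$ that is the identity modulo $(t)$. I would lift it to a continuous $k_{\infty}$-automorphism of $k'((s))_{\infty}$ which is the identity modulo $(t)$ (the choice of lift is harmless since by property (i) of Subsection \ref{defn-of-omega-sec} the map $\Omega^{(p)}$ factors through reduction modulo $(t^{p})$), and then decompose each $q_{i}$ multiplicatively as a product of terms $f\in k'((s))^{\times}$ and $\underline{e}^{\alpha t^{a}}$ with $\alpha\in k'((s))$ and $0<a<p$. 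Expanding the wedges multilinearly and applying Proposition \ref{invariance-prop} termwise would then yield the desired equality of residues.

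The only delicate step is the bookkeeping in the second paragraph, namely verifying that the two pullbacks differ exactly by applying $\sigma^{-1}$ to one of the two factors of each pair and that the common $(a_{1},a_{2},a_{3})$-contribution drops out of the difference; once this is in place, the substantive content of the lemma is supplied by the already-established invariance of Proposition \ref{invariance-prop}.
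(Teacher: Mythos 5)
Your proposal is correct and follows essentially the same route as the paper, which simply reduces to the completed case $\pazocal{S}\simeq k'((s))_p$ and invokes the vanishing of the residue in (\ref{residue-same}), i.e.\ Proposition \ref{invariance-prop}; your explicit bookkeeping (cancellation of the first components and the twist of the second components by $\sigma^{-1}=\tilde{\chi}_{\eta}'^{-1}\circ\tilde{\chi}_{\eta}$, read backwards) is exactly what that reduction amounts to. The only detail worth keeping in mind, handled in the proof of Lemma \ref{lemma-indep-res} via separability of $k'/k$, is that $\sigma$ is automatically a $k'$-algebra automorphism, which is needed to apply Proposition \ref{invariance-prop} over $k'((s))$.
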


\begin{proof}
The proof is exactly the same as that of Lemma \ref{lemma-indep-res}. We reduce to the case where $\pazocal{R}_{\eta}$ and $\pazocal{R}'_{\eta}$ are both $k'((s))_{p}$ and $\tilde{\chi}_{\eta}$ is replaced with $\varphi$ in the proof of this lemma.   Then the result follows from the fact that the residue of (\ref{residue-same}) at $s=0$ is 0. 
\end{proof}

\noindent Since this map is independent  of the choice of the lifting $\tilde{\chi}_{\eta},$ we will denote it by $res_{x}\omega^{(p)} _{\chi_{\eta}}.$ If the map $\chi_{\eta}$ is clear from the context, we will simply denote it by $res_{x}\omega^{(p)}.$

The following lemma is an immediate consequence of the definition of $res_{x}\omega^{(p)}.$ 
\begin{lemma}\label{lemma-sum-residue}
Suppose that $\pazocal{R},$ $\pazocal{R}'$ and $\pazocal{R}''$ are smooth $k_{p}$-algebras of relative dimension 1 and $\phi:\underline{\pazocal{R}} \to \underline{\pazocal{R}}'$ and $\psi:\underline{\pazocal{R}}' \to \underline{\pazocal{R}}''$ are $k$-algebra isomorphisms of their reductions modulo $(t).$ Following the notation above, if $(\mathfrak{q},\mathfrak{q}') \in \Lambda ^{3}(\pazocal{R}_{\eta},\pazocal{R}'_{\eta}, \phi_{\eta} )^{\times}$ and  $(\mathfrak{q}',\mathfrak{q}'') \in \Lambda ^{3}(\pazocal{R}'_{\eta},\pazocal{R}''_{\eta}, \psi_{\eta} )^{\times}$ then 
$$
(\mathfrak{q},\mathfrak{q}'') \in \Lambda ^{3}(\pazocal{R}_{\eta},\pazocal{R}''_{\eta}, \psi_{\eta}\circ \phi_{\eta} )^{\times}
$$
and 
$$
res_{x}\omega^{(p)}(\mathfrak{q},\mathfrak{q}'')=
res_{x}\omega^{(p)}(\mathfrak{q},\mathfrak{q}')+ res_{x}\omega^{(p)}(\mathfrak{q}',\mathfrak{q}'').
$$
\end{lemma}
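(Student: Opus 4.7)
The compatibility $(\mathfrak{q},\mathfrak{q}'') \in \Lambda^3(\pazocal{R}_\eta,\pazocal{R}''_\eta,\psi_\eta\circ \phi_\eta)^\times$ is tautological: composing the compatibility conditions for $(\mathfrak{q},\mathfrak{q}')$ under $\phi_\eta$ and for $(\mathfrak{q}',\mathfrak{q}'')$ under $\psi_\eta$ yields $(\psi_\eta\circ \phi_\eta)(\mathfrak{q}|_t)=\mathfrak{q}''|_t$. The content of the lemma is thus the additivity of the residues, which I would prove by a telescoping argument with carefully chosen liftings and splittings.

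First, lift $\phi_\eta$ and $\psi_\eta$ to $k_p$-algebra isomorphisms $\tilde{\phi}_\eta:\pazocal{R}_\eta\xrightarrow{\sim}\pazocal{R}'_\eta$ and $\tilde{\psi}_\eta:\pazocal{R}'_\eta\xrightarrow{\sim}\pazocal{R}''_\eta$; then $\tilde{\psi}_\eta\circ \tilde{\phi}_\eta$ lifts $\psi_\eta\circ \phi_\eta$, and by Lemma~\ref{lemma-indep-reduced} these specific liftings may be used to compute all three residues. Next, fix a splitting $\varphi:\pazocal{R}_\eta\xrightarrow{\sim}\underline{\pazocal{R}}_{\eta,p}$, let $\phi^\flat:\underline{\pazocal{R}}_{\eta,p}\xrightarrow{\sim}\underline{\pazocal{R}}'_{\eta,p}$ be the unique $k_p$-algebra extension of $\phi_\eta$ fixing $t$, and set $\varphi':=\phi^\flat\circ \varphi\circ \tilde{\phi}_\eta^{-1}$. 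One checks that $\varphi'$ is the identity modulo $(t)$ and hence a valid splitting of $\pazocal{R}'_\eta$; by Lemma~\ref{lemma-indep-res} it may be used to compute $res_x\omega^{(p)}$ on pairs originating in $\pazocal{R}'_\eta$.

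Unwinding the definitions yields
\begin{align*}
res_x\omega^{(p)}(\mathfrak{q},\mathfrak{q}')&=res_x\bigl[\Omega^{(p)}(\Lambda^3\varphi(\mathfrak{q}))-\Omega^{(p)}(\Lambda^3(\varphi\circ \tilde{\phi}_\eta^{-1})(\mathfrak{q}'))\bigr],\\
res_x\omega^{(p)}(\mathfrak{q}',\mathfrak{q}'')&=res_x\bigl[\Omega^{(p)}(\Lambda^3\varphi'(\mathfrak{q}'))-\Omega^{(p)}(\Lambda^3(\varphi'\circ \tilde{\psi}_\eta^{-1})(\mathfrak{q}''))\bigr],\\
res_x\omega^{(p)}(\mathfrak{q},\mathfrak{q}'')&=res_x\bigl[\Omega^{(p)}(\Lambda^3\varphi(\mathfrak{q}))-\Omega^{(p)}(\Lambda^3(\varphi\circ \tilde{\phi}_\eta^{-1}\circ \tilde{\psi}_\eta^{-1})(\mathfrak{q}''))\bigr].
\end{align*}
The additivity then reduces to the single naturality identity
\[
res_x\Omega^{(p)}(\Lambda^3\varphi'(\xi))=res_x\Omega^{(p)}(\Lambda^3(\varphi\circ \tilde{\phi}_\eta^{-1})(\xi))
\]
for every $\xi\in\Lambda^3(\pazocal{R}'_\eta)^\times$: applied to $\xi=\mathfrak{q}'$ it cancels the $\mathfrak{q}'$-terms, and applied to $\xi=\tilde{\psi}_\eta^{-1}(\mathfrak{q}'')$ it matches the $\mathfrak{q}''$-contribution to $res_x\omega^{(p)}(\mathfrak{q},\mathfrak{q}'')$ with that in $res_x\omega^{(p)}(\mathfrak{q}',\mathfrak{q}'')$.

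The main (and only) obstacle is this naturality identity. Since $\varphi'=\phi^\flat\circ (\varphi\circ \tilde{\phi}_\eta^{-1})$ and $\phi^\flat$ acts on multiplicative generators by $\underline{e}^{\alpha t^a}\mapsto \underline{e}^{\phi_\eta(\alpha)t^a}$, every clause in the definition of $\Omega^{(p)}$ in \textsection\ref{defn-of-omega-sec} is preserved under $\Lambda^3\phi^\flat$: the defining formulas depend only on the numerical weights $a,b,c$, polynomial combinations of coefficients, and the canonical derivation $d$, all of which are natural for the $k$-algebra isomorphism $\phi_\eta$. Thus $\Omega^{(p)}\circ \Lambda^3\phi^\flat=\phi_{\eta*}\circ \Omega^{(p)}$, and since $\phi_\eta$ identifies the local rings at $x$ on both sides, taking $res_x$ yields the desired equality and completes the proof.
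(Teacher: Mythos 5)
Your proof is correct, and it follows the only natural route: unwind the definition with compatible choices of liftings and splittings, telescope, and invoke the functoriality of $\Omega^{(p)}$ under the coefficient isomorphism $\phi^{\flat}$ together with the invariance of residues under the identification of spectra. The paper itself offers no proof, stating only that the lemma is an immediate consequence of the definition of $res_{x}\omega^{(p)}$; your write-up supplies exactly the details (in particular the naturality identity $\Omega^{(p)}\circ\Lambda^{3}\phi^{\flat}=\phi_{\eta*}\circ\Omega^{(p)}$) that this assertion implicitly relies on.
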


\section{The residue of $\Omega^{(p)}$ in the case of good liftings} 

In this section, we relate the residues of $\Omega ^{(p)}$ to $\ell i_{2}^{(p)}$ in the case of good liftings, continuing with our assumption that $p \geq 5. $ We start wıth comparing  the difference between the values under $\ell^{(p)}$ of the residues of  
$$
\mathfrak{q}'=(s-xt^w)\wedge \underline{e}^{\alpha t^a}\wedge \underline{e}^{\beta t^b} \in \Lambda ^{3} k((s))_{p} ^{\times}
$$
and 

$$
\mathfrak{q}=s\wedge \underline{e}^{\alpha t^a}\wedge \underline{e}^{\beta t^b}  \in \Lambda ^{3} k((s))_{p} ^{\times},
$$
where $w \geq 2, $ $a,b\geq 0,$ $x \in k$ and $\alpha, \, \beta \in k[[s]].
$
If $a=0$ or $b=0,$ we follow the conventions above.

\begin{lemma}\label{lemma-special-residue-formula}
With notation as above, we have
\begin{align}
 \ell^{(p)}(res_{s-xt^w}(\mathfrak{q}'))-\ell^{(p)} (res_{s}(\mathfrak{q}))=res_{s=0} \Omega ^{(p)}(\mathfrak{q}'-\mathfrak{q}).
\end{align}
\end{lemma}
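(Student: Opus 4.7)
The plan is to prove the identity by explicit computation of both sides and matching them term by term. First I would rewrite $\mathfrak{q}'-\mathfrak{q}$ by factoring out $s$: in $\Lambda^3 k((s))_p^\times$,
\[
\mathfrak{q}'-\mathfrak{q} \;=\; (1-xt^w/s)\wedge \underline{e}^{\alpha t^a}\wedge \underline{e}^{\beta t^b},
\]
and since $(xt^w/s)^p\in(t^p)$, the truncated logarithm yields the factorization
\[
1-xt^w/s \;=\; \prod_{n=1}^{p-1}\underline{e}^{-\frac{x^n}{n\,s^n}t^{nw}}.
\]

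For the right-hand side, I would apply $\Omega^{(p)}$ term by term. Rule $(ii')$ of \textsection 4.1 kills every summand except the one with $nw+a+b=p$, isolating the single index $n_0:=(p-a-b)/w$ when this lies in $\{1,\dots,p-1\}$; otherwise both sides vanish by the same exponent count applied to the LHS. Rules $(iii')$ and $(iv')$, selected by the ordering of $n_0 w,a,b$, then produce an explicit 1-form on $k((s))$; for instance, in the ordering $n_0 w>a\geq b$ it is $-\tfrac{x^{n_0}}{n_0 s^{n_0}}(a\alpha\,d\beta-b\beta\,d\alpha)$, whose residue at $s=0$ is $-\tfrac{x^{n_0}}{n_0}$ times the coefficient of $s^{n_0-1}$ in $a\alpha\beta'-b\beta\alpha'$, i.e.
\[
\mathrm{RHS} \;=\; -\frac{x^{n_0}}{n_0}\sum_{i+j=n_0}(aj-bi)\,\frac{\alpha^{(i)}(0)}{i!}\frac{\beta^{(j)}(0)}{j!}.
\]

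For the left-hand side, I would compute the residues directly as
\[
\mathrm{res}_s(\mathfrak{q})=\underline{e}^{\alpha(0)t^a}\wedge \underline{e}^{\beta(0)t^b},\quad \mathrm{res}_{s-xt^w}(\mathfrak{q}')=\underline{e}^{\alpha(xt^w)t^a}\wedge \underline{e}^{\beta(xt^w)t^b},
\]
Taylor-expand $\alpha(xt^w)=\sum_i\tfrac{\alpha^{(i)}(0)}{i!}x^it^{iw}$ (similarly for $\beta$) modulo $(t^p)$, split the $\underline{e}$'s multiplicatively, and apply $\ell^{(p)}$ bilinearly. Only pairs $(i,j)$ with $(a+iw)+(b+jw)=p$, i.e.\ $i+j=n_0$, contribute; the $(0,0)$ piece is cancelled by subtracting $\ell^{(p)}(\mathrm{res}_s(\mathfrak{q}))$ (both nonzero only in the degenerate case $n_0=0$, where they agree). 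What remains is
\[
\mathrm{LHS} \;=\; x^{n_0}\sum_{i+j=n_0}\frac{(b-a)+(j-i)w}{2}\,\frac{\alpha^{(i)}(0)}{i!}\frac{\beta^{(j)}(0)}{j!}.
\]

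The two expressions then coincide via the coefficient identity
\[
\frac{(b-a)+(j-i)w}{2}\;\equiv\;\frac{bi-aj}{n_0}\pmod{p},
\]
which follows from $n_0w=p-a-b$ together with $p=0$ in $k$; this is the decisive characteristic-$p$ input. The main obstacle is the case analysis required by $(iii')$–$(iv')$: different orderings of $n_0w,a,b$ produce differently shaped 1-forms, and the degenerate situations $a=0$ or $b=0$ must be handled using the convention $a\alpha:=0$ when $a=0$ (and analogously for $b$). Once the correct 1-form is produced in each ordering, however, its residue at $s=0$ reduces to the same coefficient identity and the match is a bookkeeping check.
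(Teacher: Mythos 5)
Your proposal is correct and follows essentially the same route as the paper's proof: both compute the two sides explicitly after factoring $1-xt^w/s$ into truncated exponentials $\underline{e}^{-\frac{x^n}{n s^n}t^{nw}}$, isolate the unique index $n_0=q=\frac{p-(a+b)}{w}$ via the vanishing rule, run the case analysis on the ordering of $n_0w,a,b$ (the paper's cases (i)--(v)), and close with the same characteristic-$p$ coefficient identity, namely that $\frac{(b+jw)-(a+iw)}{2}$ and $\frac{bi-aj}{q}$ both reduce to $b+wj$ using $qw\equiv -(a+b)\pmod p$. The only presentational difference is that the paper records the residue in each ordering separately (in some cases it comes out directly as $x^q\sum_{i+j=q}(b+wj)\alpha_i\beta_j$ without invoking the identity), but the content is identical.
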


\begin{proof} We start with noting that 
$$
res_{s-xt^w}(\mathfrak{q}')=\underline{e}^{\alpha (xt^w)t^a}\wedge \underline{e}^{\beta(xt^w)t^b}
$$
and $res_{s}(\mathfrak{q})=\underline{e}^{\alpha (0)t^a}\wedge \underline{e}^{\beta(0)t^b}.$ This follows from the definition of the residue map (\ref{res-lam}) together with the observation that if $f(t),\, g(t)  \in k[[t]],$ with $f(0)=0,$  and $h(s) \in k[[s]]$ then under the isomorphism $$k[[s,t]]/(s-f(t)) \simeq k[[t]],$$ $h(s)g(t)$ maps to $h(f(t))g(t).$

Recall that $\ell^{(p)}=\frac{1}{2} \sum _{1 \leq i <p }i\cdot \ell _{p-i} \wedge \ell _{i}.$ Therefore,  if $0 \leq i,\, j$ and $y, \, z \in k,$ then
$$
\ell^{(p)}(\underline{e}^{yt^j}\wedge \underline {e}^{zt^i})=0,
$$
if $i=0, $ or $j=0$ or $i+j \neq p;$ and 
$$
\ell^{(p)}(\underline{e}^{yt^j}\wedge \underline {e}^{zt^i})=iyz,
$$
if $0<i, \, j$ and $i+j=p.$ 

First, suppose that $w \nmid p-(a+b)$ or $p<a+b.$ In this case,  since  $res_{s}(\mathfrak{q})=\underline{e}^{\alpha (0)t^a}\wedge \underline{e}^{\beta(0)t^b}$ with $a+b \neq 0, $ we have 
$$
\ell^{(p)} (res_{s}(\mathfrak{q}))=0.
$$
Since $\alpha (xt^w)t^a,$   has monomials only of type $t^j$ with $a\leq j$ and $w|(j-a)$ and $\beta (yt^w)t^b,$  has monomials only of type $t^i$ with $b\leq i$ and $w|(i-b),$ $
res_{s-xt^w}(\mathfrak{q}')
$ 
is a sum of terms of the form $\underline{e}^{yt^j}\wedge \underline {e}^{zt^i},$ with $a+b \leq i+j$ and  $w\nmid (i+j -(a+b)).$ If $p<a+b,$ the first condition implies that $p<i+j;$ if $w \nmid p-(a+b),$  then  $w\nmid (i+j -(a+b))$ implies that $i+j \neq p.$ Therefore, in both cases we have $\ell ^{(p)}(\underline{e}^{yt^j}\wedge \underline {e}^{zt^i})=0.$ Summing these, we obtain that 
$$
\ell^{(p)}(res_{s-xt^w}(\mathfrak{q}'))=0.
$$ 
On the other hand, 
$$
\Omega^{(p)}(\mathfrak{q}'-\mathfrak{q})=\Omega ^{(p)}(\underline{e}^{-\sum _{1\leq n<p}\frac{1}{n}(\frac{xt^w}{s})^n}\wedge \underline{e}^{\alpha t^a}\wedge \underline{e}^{\beta t^b}).
$$
The expression in the last parantheses is a sum of elements of the form $\underline{e}^{\gamma t^c}\wedge \underline{e}^{\alpha t^a}\wedge \underline{e}^{\beta t^b},$ with $0<c$ and $w|c.$ If $p<a+b$ then $p<a+b+c;$ if $w \nmid p-(a+b)$ then $a+b+c \neq p.$ In each of these cases $\Omega^{(p)}(\underline{e}^{\gamma t^c}\wedge \underline{e}^{\alpha t^a}\wedge \underline{e}^{\beta t^b})=0.$ Summing these terms, we obtain that $\Omega^{(p)}(\mathfrak{q}'-\mathfrak{q})=0.$ 

Now suppose that $a+b=p.$ In 
$$
\ell^{(p)} (res_{s}(\mathfrak{q}))=\frac{1}{2} \sum _{1 \leq i <p }i\cdot (\ell _{p-i} \wedge \ell _{i} )(\underline{e}^{\alpha (0)t^a}\wedge \underline{e}^{\beta(0)t^b})
$$
the only terms  that contribute are  the terms corresponding to $i=a$ and $i=b.$ These contributions sum, to give, 
$$
\ell^{(p)} (res_{s}(\mathfrak{q}))=b\alpha(0)\beta(0),
$$
since $a=-b$ in $k.$ The only contribution to 
$$
\ell^{(p)}(res_{s-xt^w}(\mathfrak{q}'))=\ell^{(p)}(\underline{e}^{\alpha (xt^w)t^a}\wedge \underline{e}^{\beta(xt^w)t^b})
$$
comes from the constant terms of $\alpha(xt^e)$ and $\beta (xt^w).$ Therefore, 
$$
\ell^{(p)}(res_{s-xt^w}(\mathfrak{q}'))=\ell ^{(p)}(\underline{e}^{\alpha (0)t^a}\wedge \underline{e}^{\beta(0)t^b})=b\alpha (0) \beta(0). 
$$
Finally, as above $\mathfrak{q}'-\mathfrak{q}$ is a sum of terms of the form $\underline{e}^{\gamma t^c}\wedge \underline{e}^{\alpha t^a}\wedge \underline{e}^{\beta t^b},$ all of which have the property that $a+b+c >p.$ This implies that  $\Omega^{(p)}(\mathfrak{q}'-\mathfrak{q})=0.$

In case, $a=b=0$ and $w=p,$ we similarly have $\ell^{(p)}(res_{s-xt^w}(\mathfrak{q}'))=\ell^{(p)} (res_{s}(\mathfrak{q}))=0$ and $\Omega^{(p)}(\mathfrak{q}'-\mathfrak{q})=0.$ 

Therefore, from now on, we assume  that $w|(p-(a+b))$ and $0<a+b<p$ and we put $q=\frac{p-(a+b)}{w}.$ We also put $\alpha=\sum _{0\leq i}\alpha _i s^i$ and $\beta=\sum _{0\leq i}\beta _i s^i.$   
With these assumptions,  $\ell^{(p)}(res_{s}(\mathfrak{q}))=0$ and $\ell^{(p)}(res_{s-xt^w}(\mathfrak{q}'))=x^q\sum _{i+j=q}(b+wj)\alpha _i \beta_j.$ In order to compute $res_{s=0}\Omega ^{(p)}(\mathfrak{q}'-\mathfrak{q}),$ we look at the possible cases:

(i) ${\rm max}\{a,b \}< p-(a+b)$

(ii) $p-(a+b)\leq{\rm min}\{a,b \}  $ and $a \neq b.$ 

(iii) $p-(a+b)<a  $ and $a = b.$

(iv) $p-(a+b)={\rm max}\{a,b \}$ and $a \neq b$

(v) ${\rm min}\{a,b \}<p-(a+b)<{\rm max}\{a,b \}.$

Let us see that these are the only possible cases. Suppose first that  $a=b.$ In this case, if $a<p-2a,$ then (i) holds; if $p-2a<a$ then (iii) holds. Note that since, by assumption, $p$ is a prime greater than 3, $p-2a \neq a.$ Next suppose that $a \neq b.$ Let $m:={\rm min}\{a,b \},$ $M:={\rm max}\{a,b \}$ and $c:=p-(a+b).$ If $M<c$ then (i) holds; if $c=M$ then, (iv) holds; if $m<c<M,$ then (v) holds; if $c\leq m,$ then (ii) holds. Let us look at these cases separately: 

(i) In this case, 
$$
\Omega ^{(p)}(\mathfrak{q}'-\mathfrak{q})=-\frac{1}{q}(\frac{x}{s})^q(a \alpha d\beta -b\beta d\alpha )
$$
and 
$$
res_{s=0}\Omega ^{(p)}(\mathfrak{q}'-\mathfrak{q})=-\frac{x^q}{q}\sum_{i+j=q} (aj-bi)\alpha_i \beta _{j}. 
$$

Since $qw=p-(a+b),$  $-\frac{1}{q}(aj-bi)=-\frac{1}{q}(aj-b(q-j))=-\frac{a+b}{q}j+b=wj+b$ in characteristic $p.$ Therefore in this case the residue matches with the difference of the $\ell^{(p)}$'s.
 
(ii) In this case because of the anti-symmetry between the terms we may assume without loss of generality that $a>b.$ Then 

$$
\Omega ^{(p)}(\mathfrak{q}'-\mathfrak{q})=\Omega ^{(p)}(\underline{e}^{\alpha t^a}\wedge \underline{e}^{\beta t^b}\wedge \underline{e}^{-\sum _{1\leq n<p}\frac{1}{n}(\frac{xt^w}{s})^n})
=\alpha (b \beta d(-\frac{1}{q}\frac{x^q}{s^q}) +w\frac{x^q}{s^q} d\beta)=x^q\alpha(\frac{b\beta ds}{s^{q+1}}+\frac{wd\beta}{s^q} ).$$
Hence the residue is
$$
x^q\sum _{i+j=q}(b+wj)\alpha_i\beta _j,
$$
 which again matches exactly to the above difference of $\ell^{(p)}$'s. 
 
(iii) In this case 
$$
\Omega ^{(p)}(\mathfrak{q}'-\mathfrak{q})=\Omega ^{(p)}(\underline{e}^{\alpha t^a}\wedge \underline{e}^{\beta t^b}\wedge \underline{e}^{-\sum _{1\leq n<p}\frac{1}{n}(\frac{xt^w}{s})^n})
=-\frac{1}{q}(\frac{x}{s})^q(a \alpha d\beta -b\beta d\alpha )
$$
and we proceed exactly as in (i).

(iv) In this case again, without loss of generality, assume that $a<b.$ Then we have $$
\Omega ^{(p)}(\mathfrak{q}'-\mathfrak{q})=\Omega ^{(p)}( \underline{e}^{\beta t^b}\wedge \underline{e}^{-\sum _{1\leq n<p}\frac{1}{n}(\frac{xt^w}{s})^n}\wedge \underline{e}^{\alpha t^a})
=\alpha (b \beta (-\frac{1}{q}d\frac{x^q}{s^q})  +w\frac{x^q}{s^q} d\beta)
$$ 
and we proceed as in (ii).

(v) In this case, again assuming without loss of generality that $a<b,$ we proceed exactly as in (iv). This finishes the proof of the lemma.
\end{proof}

We will need the following very elementary observation for the proof of the general proposition on the residues. 

\begin{lemma}\label{lemma-change-unif}
Suppose that $s' \in k[[s,t]]$ with  $s'|_{t^2}=s|_{t^2}.$ There exist $u \in k[[s,t]]^{\times}$ and $x_{w}\in k,$ for $w\geq 2$ such that $s'=us+\sum_{2\leq w}x_wt^w.$
\end{lemma}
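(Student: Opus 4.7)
The statement is essentially an exercise in power series bookkeeping. My plan is as follows.

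The hypothesis $s'|_{t^2} = s|_{t^2}$ means that $s' - s$ lies in the ideal $(t^2) \subseteq k[[s,t]]$, so I would first write
\[
s' = s + t^2 g(s,t) \quad \text{for some } g \in k[[s,t]].
\]
Expanding $g(s,t) = \sum_{i,j \geq 0} g_{ij} s^i t^j$, the next step is to split this double sum according to whether $i=0$ or $i \geq 1$:
\[
s' = s + \sum_{j \geq 0} g_{0j}\, t^{j+2} + \sum_{i \geq 1,\, j \geq 0} g_{ij}\, s^i t^{j+2}.
\]
The middle sum depends only on $t$, so reindexing $w = j+2$ it has the desired shape $\sum_{w \geq 2} x_w t^w$ with $x_w := g_{0,w-2} \in k$. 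The last sum is divisible by $s$, and I would factor $s$ out to write it as $s \cdot h(s,t)$ with $h \in (t^2)\, k[[s,t]]$.

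Collecting terms gives $s' = s(1 + h) + \sum_{w \geq 2} x_w t^w$, and setting $u := 1 + h$ finishes the proof, provided $u$ is a unit. This last check is immediate since $h$ lies in the maximal ideal $(s,t)$ (in fact in $(t^2)$), so $u$ has constant term $1$ and is therefore a unit in the complete local ring $k[[s,t]]$. There is no serious obstacle here — the only thing to be careful about is correctly separating the $s$-free part of the correction from the $s$-divisible part so that the unit and the pure-$t$ tail are cleanly identified.
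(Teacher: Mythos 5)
Your proof is correct and follows essentially the same route as the paper: both write $s'-s = t^2 g$ and split $g$ into its $s$-free part (giving the tail $\sum_{2\le w} x_w t^w$) and its $s$-divisible part (absorbed into the unit $u=1+h$). No issues.
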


\begin{proof}
By assumption, $s'=s+t^2(\alpha +s\beta)$ with $\alpha \in k[[t]]$ and $\beta \in k[[s,t]].$ If we put $u=1+t^2\beta$ and $t^2\alpha =\sum _{2\leq w}x_w t^w,$ we obtain the expression we are looking for. 
\end{proof}

\begin{lemma}\label{residue-different-unif}
Suppose that $s'|_{t^2}=s|_{t^2}$ and that $
\mathfrak{q}'=s'\wedge \mathfrak{p} 
$
and 
$
\mathfrak{q}=s\wedge \mathfrak{p}
,$ where $\mathfrak{p}=\underline{e}^{\alpha t^a}\wedge \underline{e}^{\beta t^b},$
with $0 \leq a,b$ and $\alpha, \beta \in k[[s]],$ then we have
\begin{align}
 \ell^{(p)}(res_{s'}(\mathfrak{q}'))-\ell^{(p)} (res_{s}(\mathfrak{q}))=res_{s=0} \Omega ^{(p)}(\mathfrak{q}'-\mathfrak{q}).
\end{align}
\end{lemma}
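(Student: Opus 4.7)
The plan is to apply Lemma \ref{lemma-change-unif} to decompose the change $s \to s'$ into a chain of elementary steps, handle each step either trivially (for a unit scaling) or by invoking Lemma \ref{lemma-special-residue-formula} (for an affine shift), and assemble the result via the telescoping additivity of both sides.

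Concretely, write $s' = us + \sum_{w=2}^{p-1} x_w t^w$ with $u \in k[[s,t]]^\times$ and $x_w \in k$; the hypothesis $s'|_{t^2}=s|_{t^2}$ forces $u \equiv 1 \pmod{t^2}$. Introduce the chain of uniformizers $s_0:=s$, $s_1:=us$, and $s_k := us + \sum_{j=2}^{k}x_j t^j$ for $2 \le k \le p-1$, so that $s_{p-1} \equiv s' \pmod{t^p}$. Both sides of the asserted identity are telescoping along this chain (the LHS trivially by inserting and subtracting intermediate terms $\ell^{(p)}(res_{s_k}(s_k \wedge \mathfrak{p}))$, the RHS by $\mathbb{Z}$-linearity of $\Omega^{(p)}$ and of $res_{s=0}$), so it suffices to verify the identity on each consecutive pair $(s_{k-1},s_k)$.

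For the unit step $s_0 \to s_1 = us_0$, I expect both sides to vanish. The LHS is zero because $(us)=(s)$ as ideals and the tame symbol depends only on the defining ideal, so $res_{us}(us\wedge\mathfrak{p}) = res_{s}(s\wedge\mathfrak{p})$. For the RHS, factor $u = \prod_{i=2}^{p-1}\underline{e}^{\zeta_i t^i}$ with $\zeta_i \in k[[s]]$ (possible since $u \in 1 + t^2 k[[s,t]]$); then $u \wedge \mathfrak{p}$ becomes a sum of terms $\underline{e}^{\zeta_i t^i}\wedge \underline{e}^{\alpha t^a}\wedge \underline{e}^{\beta t^b}$, each of which yields, by the defining formulas of $\Omega^{(p)}$ in \textsection\ref{defn-of-omega-sec}, a 1-form whose coefficients lie in $k[[s]]$, hence with vanishing residue at $s=0$. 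For the shift step $s_{k-1} \to s_k = s_{k-1} + x_k t^k$ with $k \geq 2$, let $\sigma$ be the $k_\infty$-automorphism of $k((s))_\infty$ determined by $\sigma(s)=s_{k-1}$, which is identity modulo $(t)$ since $s_{k-1} \equiv s \pmod{t^2}$. Expand $\sigma^{-1}(\mathfrak{p}) = \underline{e}^{\alpha(\sigma^{-1}(s))t^a}\wedge \underline{e}^{\beta(\sigma^{-1}(s))t^b}$ as a finite $\Lambda^2$-sum of terms $\underline{e}^{\tilde\alpha\, t^{\tilde a}}\wedge \underline{e}^{\tilde\beta\, t^{\tilde b}}$ with $\tilde\alpha,\tilde\beta \in k[[s]]$, obtained by collecting powers of $t$ in $\alpha(\sigma^{-1}(s))$ and $\beta(\sigma^{-1}(s))$ and factoring the truncated exponentials accordingly. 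Lemma \ref{lemma-special-residue-formula}, applied to each such summand with shift $-x_k$ in $t^k$ and summed by linearity, gives the desired identity in the $s$-coordinate with $\sigma^{-1}(\mathfrak{p})$ in place of $\mathfrak{p}$. Transporting back through $\sigma$: since $\sigma$ is a $k_\infty$-algebra automorphism acting as the identity on the residue field at $s=0$, the tame symbols on the LHS map intrinsically to $\ell^{(p)}(res_{s_k}(s_k\wedge\mathfrak{p})) - \ell^{(p)}(res_{s_{k-1}}(s_{k-1}\wedge\mathfrak{p}))$, while Proposition \ref{invariance-prop} guarantees that the RHS equals $res_{s=0}\,\Omega^{(p)}(s_k\wedge\mathfrak{p} - s_{k-1}\wedge\mathfrak{p})$.

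The main obstacle I foresee is the careful bookkeeping in the shift step: one must verify that the finite $\Lambda^2$-expansion of $\sigma^{-1}(\mathfrak{p})$ is genuinely in the form required by Lemma \ref{lemma-special-residue-formula} (with $\tilde\alpha,\tilde\beta \in k[[s]]$ and the $t$-dependence appearing only through the overall factors $t^{\tilde a},t^{\tilde b}$), and that the $\sigma$-transport identifies the relevant tame symbols and $\Omega^{(p)}$-residue precisely, without parasitic corrections from the residue-field identification. Once this is in place, summing the elementary identities over the chain yields the lemma.
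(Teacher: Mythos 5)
Your proposal matches the paper's own proof in all essentials: both decompose $s'$ via Lemma \ref{lemma-change-unif}, dispose of the unit factor by noting that $res_{us}(us\wedge\mathfrak{p})=res_{s}(s\wedge\mathfrak{p})$ and that $\Omega^{(p)}(u\wedge\mathfrak{p})$ is regular at $s=0$, and then telescope through the shifts $s_i\mapsto s_i+x_{i+1}t^{i+1}$ using Lemma \ref{lemma-special-residue-formula}. Your extra care in conjugating by $\sigma$ and invoking Proposition \ref{invariance-prop} so that Lemma \ref{lemma-special-residue-formula} may be applied at the non-standard uniformizers $s_i$ is sound (and is left implicit in the paper); just add the one-line closing observation, present in the paper, that replacing $s'$ by $s_{p-1}$ changes neither side because both $\ell^{(p)}\circ res$ and $\Omega^{(p)}$ factor through reduction modulo $(t^p)$.
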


\begin{proof}
Let us write $s'=us +\sum_{2\leq w}x_wt^w$ as in Lemma \ref{lemma-change-unif} above.  If we let $s'':=us$ and $\mathfrak{q}'':=s''\wedge \mathfrak{p}$ then  $res_{s''}( \mathfrak{q}'')=res_{s}(\mathfrak{q})$ and since $\Omega ^{(p)}(\mathfrak{q}''-\mathfrak{q})=\Omega ^{(p)}(u\wedge \mathfrak{p}) \in \Omega ^{1} _{k[[s]]/k},$ $res_{s=0}\Omega ^{(p)}(\mathfrak{q}''-\mathfrak{q})=0.$

Let us put $s_1=s''$ and $s_{i+1}=s_i+x_{i+1}t^{i+1}$ for $1\leq i<p-1.$ If we also let $\mathfrak{q}_i:=s_{i}\wedge \mathfrak{p}$ and apply Lemma \ref{lemma-special-residue-formula} to the pair $\mathfrak{q}_{i+1}$ and $\mathfrak{q}_{i}$ for all $1 \leq i <p-1$ and take their sum, we obtain 
\begin{align*}
 \ell^{(p)}(res_{s_{p-1}}(\mathfrak{q}_{p-1}))-\ell^{(p)} (res_{s_1}(\mathfrak{q}_1))=res_{s=0} \Omega ^{(p)}(\mathfrak{q}_{p-1}-\mathfrak{q}_1).
\end{align*}
Using the previous paragraph, this can be rewritten as: 
\begin{align*}
 \ell^{(p)}(res_{s_{p-1}}(\mathfrak{q}_{p-1}))-\ell^{(p)} (res_{s}(\mathfrak{q}))=res_{s=0} \Omega ^{(p)}(\mathfrak{q}_{p-1}-\mathfrak{q}).
\end{align*}

In order finish the proof of the lemma, we need to compare the terms corresponding to $s'$ and $s_{p-1}.$ Note that $s'-s_{p-1}=t^{p}f(t),$ with $f(t) \in k[[t]].$ This implies that the images of  $res_{s'}(s'\wedge \mathfrak{p})$ and $res_{s_{p-1}}(s_{p-1}\wedge \mathfrak{p})$ are  equal to each other in $\Lambda ^{2}k_{p} ^{\times}$ and hence that 
$\ell^{(p)}(res_{s'}(\mathfrak{q}'))=\ell^{(p)}(res_{s_{p-1}}(\mathfrak{q}_{p-1})).$ 
Since $\mathfrak{q'}-\mathfrak{q}_{p-1}=(1+\frac{t^pf(f)}{s_{p-1}})\wedge \mathfrak{p}, $ $\mathfrak{q'}-\mathfrak{q}_{p-1}$ has image 0 in $\Lambda ^{3}k((s))_{p}$ and hence $\Omega ^{(p)}(\mathfrak{q'}-\mathfrak{q}_{p-1})=0.$ This completes the proof. 
\end{proof}

\begin{lemma}\label{lemma-same-unif}
Suppose that $a,b\geq 0,$  and $\alpha, \, \beta \in k[[s]],
$   then 
$$
\ell^{(p)}(res_{s}(s\wedge \underline{e}^{\alpha t^a} \wedge \underline{e}^{\beta t^b} ))=res_{s=0} \Omega ^{(p)}(s \wedge \underline{e}^{\alpha t^a} \wedge \underline{e}^{\beta t^b} )
$$
\end{lemma}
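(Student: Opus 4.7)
I would verify the identity by direct case analysis using the explicit formulas for $\Omega^{(p)}$ (rules (ii)$'$--(iv)$'$) and $\ell^{(p)}=\tfrac{1}{2}\sum_{1\le i<p}i\cdot\ell_{p-i}\wedge\ell_i$; both sides are given in closed form, so the point is simply to match them.

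For the left hand side, since $s$ is a uniformizer at $s=0$ and the remaining factors are units of $k[[s]]_p^{\times}$, the tame residue is
\[
res_s\bigl(s\wedge \underline{e}^{\alpha t^a}\wedge \underline{e}^{\beta t^b}\bigr)=\underline{e}^{\alpha(0)t^a}\wedge \underline{e}^{\beta(0)t^b}\in \Lambda^2 k_p^{\times}.
\]
The identity $\log\underline{e}^{ct^m}=ct^m$, valid in $R_p$, gives $\ell_n(\underline{e}^{ct^m})=c$ if $n=m\ge 1$ and $0$ otherwise, so inside the double sum defining $\ell^{(p)}$ only the indices $i=b$ and $i=a$ can contribute, and only when $a+b=p$. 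Collecting the two surviving terms yields $\tfrac{b-a}{2}\alpha(0)\beta(0)$, which equals $b\alpha(0)\beta(0)$ modulo $p$ because $a+b\equiv 0$. Hence the left hand side is $b\alpha(0)\beta(0)$ when $a+b=p$ with $a,b\ge 1$, and vanishes otherwise.

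For the right hand side, use the convention of the remark after Lemma~\ref{lemma-repar-taylor2} that treats $s\in k((s))^{\times}$ as $\underline{e}^{\gamma t^0}$ with $d\gamma=ds/s$ and $c\gamma:=0$ at $c=0$. Rule (ii)$'$ forces $\Omega^{(p)}$ to vanish unless $a+b=p$; in that case $p$ odd gives $a\ne b$, so WLOG $a>b\ge 1$, and rule (iii)$'$, applied after the even cyclic rearrangement $(s,u,v)\mapsto(u,v,s)$, yields
\[
\Omega^{(p)}\bigl(s\wedge \underline{e}^{\alpha t^a}\wedge \underline{e}^{\beta t^b}\bigr)=\alpha\cdot b\beta\cdot\frac{ds}{s},
\]
whose $s=0$ residue is $b\alpha(0)\beta(0)$, matching the left hand side. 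The degenerate possibilities $a=0$ and/or $b=0$ dispose of themselves: a specialization in $k^{\times}\subset k_p^{\times}$ is annihilated by every $\ell_n$ with $n\ge 1$, killing the left side, while on the right the sorted exponents $(\max(a,b),0,0)$ cannot sum to $p$ in $R_p$. The only real obstacle is careful bookkeeping of the $c=0$ abuse of notation and the sign of the cyclic permutation, but both have been anticipated by the conventions of \textsection\ref{defn-of-omega-sec}.
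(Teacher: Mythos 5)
Your proposal is correct and follows essentially the same route as the paper: reduce by a case split to $a+b=p$ with $a>b\geq 1$, where the residue is $\underline{e}^{\alpha(0)t^a}\wedge\underline{e}^{\beta(0)t^b}$ on the left and rule (iii)$'$ gives $\alpha b\beta\,ds/s$ on the right, both sides equalling $b\alpha(0)\beta(0)$, with all other cases vanishing. Your explicit evaluation of $\ell^{(p)}$ as $\tfrac{b-a}{2}\alpha(0)\beta(0)\equiv b\alpha(0)\beta(0)\pmod p$ and your remark on the even cyclic permutation only spell out steps the paper leaves implicit.
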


\begin{proof}
Both sides of the expression above are equal to 0 if $a+b\neq p,$ or if $a=0$ or $b=0.$ Suppose that $a+b=p,$ and $a, b >0.$  Without loss of generality assume that  $a>b.$ Then the right hand side is 
$
res_{s=0}(\alpha b\beta \frac{ds}{s})=b\alpha(0)\beta(0).
$ The left hand side is $\ell _{p}(\underline{e}^{\alpha (0)t^a}\wedge \underline{e}^{\beta(0)t^b})=b\alpha (0) \beta (0).$ This proves the lemma.
\end{proof}

\noindent We will use the lemmas above to prove the following. 

\begin{proposition}\label{prop-res-add}
Suppose that $s|_{t^2}=s'|_{t^2}$ and $\mathfrak{q}$ is $s$-good and $\mathfrak{q}'$ is $s'$-good and $\mathfrak{q}|_{t^2}=\mathfrak{q}'|_{t^2}.$ Then we have 
\begin{align}\label{res-formula-inside-prop}
 \ell^{(p)}(res_{s'}(\mathfrak{q}'))-\ell^{(p)} (res_{s}(\mathfrak{q}))=res_{s=0} \Omega ^{(p)}(\mathfrak{q}'-\mathfrak{q}).
\end{align}
 \end{proposition}

\begin{proof} First, we note that when $\mathfrak{q} \in \Lambda ^{3}k[[s]]_{\infty} ^{\times}$ then the same is true for $\mathfrak{q}'.$ This shows that the left hand side of the equation is 0 since $res_{s'}(\mathfrak{q}')=res_{s}(\mathfrak{q})=0.$ Similarly, the right hand side of the equation is 0 as well since $\Omega ^{(p)}(\mathfrak{q}'-\mathfrak{q}) \in \Omega ^{1} _{k[[s]]/k}.$ 

If $\tilde{\alpha}\in k[[s,t]]$ then the reduction of $s^{n}\underline{e}^{\tilde{\alpha}} $ modulo $(t)$ is an element of the discrete valuation ring $k((s)),$ with valuation $n.$ The same is true for $(s')^{n}\underline{e}^{\tilde{\alpha}},$ since by assumption $s'$ also reduces to $s$ in $k((s)).$  This implies that, if $(s^{n}\underline{e}^{\tilde{\alpha}})|_{t^2} =((s')^{m}\underline{e}^{\hat{\alpha}})|_{t^2}$ then $n=m$ and since by assumption $s|_{t^2}=s'|_{t^2}$ we also have $\tilde{\alpha}|_{t^2}=\hat{\alpha}|_{t^2}.$  

Therefore, if we start with $\mathfrak{q}$ and ${\mathfrak{q}}' $ as in the statement of the proposition then 
$$
\mathfrak{q}=(s^{n_1}\underline{e}^{\tilde{\alpha}})\wedge (s^{n_2}\underline{e}^{\tilde{\beta}}) \wedge (s^{n_3}\underline{e}^{\tilde{\gamma}})
$$
and 
$$
\mathfrak{q}'=((s')^{n_1}\underline{e}^{\hat{\alpha}})\wedge ((s')^{n_2}\underline{e}^{\hat{\beta}}) \wedge ((s')^{n_3}\underline{e}^{\hat{\gamma}}),
$$
with $\tilde{\alpha}|_{t^2}=\hat{\alpha}|_{t^2},$ $\tilde{\beta}|_{t^2}=\hat{\beta}|_{t^2},$ and $\tilde{\gamma}|_{t^2}=\hat{\gamma}|_{t^2}.$ 

We have seen above that the contributions from $\underline{e}^{\tilde{\alpha}}\wedge \underline{e}^{\tilde{\beta}} \wedge \underline{e}^{\tilde{\gamma}} $ and $\underline{e}^{\hat{\alpha}}\wedge \underline{e}^{\hat{\beta}} \wedge \underline{e}^{\hat{\gamma}} $ to both sides of (\ref{res-formula-inside-prop}) are equal to 0. Therefore, we only need to prove the statement  in the case when $\mathfrak{q}'=s'\wedge \underline{e}^{\tilde{\alpha} }\wedge \underline{e}^{\tilde{\beta}}$ and $\mathfrak{q}=s\wedge \underline{e}^{\hat{\alpha} }\wedge \underline{e}^{\hat{\beta}},$ where   $\tilde{\alpha}, \tilde{\beta}, \hat{\alpha}, \hat{\beta} \in k[[s,t]]$ with $\tilde{\alpha}|_{t^2}=\hat{\alpha}|_{t^2} $ and $\tilde{\beta}|_{t^2}=\hat{\beta}|_{t^2} .$ In order to prove this statement, it is enough to prove (\ref{res-formula-inside-prop}) in the following two special cases, with $\alpha, \beta \in k[[s,t]]$: 

(i) $\mathfrak{r}'=s'\wedge \underline{e}^{\alpha}\wedge \underline{e}^{\beta}$ and $\mathfrak{r}=s\wedge \underline{e}^{\alpha}\wedge \underline{e}^{\beta}$ 

and 

(ii) $\mathfrak{r}'=s\wedge \underline{e}^{\alpha} \wedge \underline{e}^{\beta }$  and $\mathfrak{r}=0.$ 

Let us first show how (i) and (ii) implies (\ref{res-formula-inside-prop}) for  $\mathfrak{q}'=s'\wedge \underline{e}^{\tilde{\alpha} }\wedge \underline{e}^{\tilde{\beta}}$ and $\mathfrak{q}=s\wedge \underline{e}^{\hat{\alpha} }\wedge \underline{e}^{\hat{\beta}}$ as above. Using (ii) first for $\mathfrak{p}:=s\wedge  \underline{e}^{\tilde{\alpha} }\wedge \underline{e}^{\tilde{\beta}}$ and 0 and then for $\mathfrak{q}=s\wedge \underline{e}^{\hat{\alpha}}\wedge \underline{e}^{\hat{\beta}}$ and 0,  we obtain that: 
\begin{align}\label{local-eq1}
 \ell^{(p)}(res_{s}(\mathfrak{p}))=res_{s=0} \Omega ^{(p)}(\mathfrak{p}).
\end{align}
and 
\begin{align}\label{local-eq2}
 \ell^{(p)}(res_{s}(\mathfrak{q}))=res_{s=0} \Omega ^{(p)}(\mathfrak{q}).
\end{align}
Now applying (i) with $\mathfrak{r}'=\mathfrak{q}'$ and $\mathfrak{r}=\mathfrak{p}$ gives: 
\begin{align}\label{local-eq3}
 \ell^{(p)}(res_{s'}(\mathfrak{q}'))-\ell^{(p)} (res_{s}(\mathfrak{p}))=res_{s=0} \Omega ^{(p)}(\mathfrak{q}'-\mathfrak{p}).
\end{align}
Combining (\ref{local-eq1}), (\ref{local-eq2}) and (\ref{local-eq3}) gives the expression we were looking for. 

Finally, we remark that (i) follows from Lemma \ref{residue-different-unif}, and  (ii) follows from Lemma \ref{lemma-same-unif}. This finishes the proof. 
\end{proof}

\begin{remark} Note that in Lemma \ref{exact-lemma}, where we show that the residue of $\Omega^{(p)}$ is independent of the coordinates, we only need $w\geq 1.$ Therefore, the residues of $\Omega^{(p)}$ are independent of the chosen coordinates when the choice of coordinates  agree  modulo $(t).$ Recall that, in the characteristic 0 case, which was discussed in \cite{un-inf} and \cite{un-higher}, the condition that    $w \geq 2$ was essential. This might mistakenly suggest the reader that in characteristic $p,$ one might define the Chow-Kontsevich dilogarithm for a curve over $k$ rather than  for a curve over $k_{2}.$ In fact, this  is not true. The argument of comparing  the residues of $\Omega^{(p)}$ to the values of $\ell i^{(p)} _{2}$  does not work if we only consider matching of the parameters only in modulo $(t).$  More precisely, we need $w \geq 2$ in Lemma \ref{lemma-special-residue-formula}. The following example shows that this lemma does not hold with $w=1.$  
 
Let us consider the ring  $k[[s,t]]$ and choose the elements
$$
\mathfrak{q}'=(s-t)\wedge (1+s^{p-1})\wedge (1+s)
$$
and 
$$
\mathfrak{q}=s\wedge (1+s^{p-1})\wedge (1+s).
$$
These two elements are the same modulo $(t)$ but not modulo $(t^2).$ 
Note that 
$$
res_{s-t}\mathfrak{q}'=(1+t^{p-1}) \wedge (1+t)
$$
and since $\ell^{(p)} ((1+t^{p-1}) \wedge (1+t))=1$ we have $\ell^{(p)} (res_{s-t}\mathfrak{q}')=1.$ On the other hand, $res_{s}(\mathfrak{q})=0.$ So we have, 
$$
\ell^{(p)} (res_{s-t}\mathfrak{q}')-\ell^{(p)} (res_{s}\mathfrak{q})=1.
 $$
 On the other hand, 
 $$
 \Omega^{(p)}(\mathfrak{q}'-\mathfrak{q})=\Omega^{(p)} ((1-\frac{t}{s})\wedge (1+s^{p-1})\wedge (1+s))=0.
 $$
 Therefore, 
 $$
 \ell^{(p)} (res_{s-t}\mathfrak{q}')-\ell^{(p)} (res_{s}\mathfrak{q})=1\neq 0=res_{s=0}\Omega^{(p)} (\mathfrak{q}'-\mathfrak{q}).
 $$
 \hfill $\Box$
\end{remark}

Suppose that $\pazocal{R}$ and $\pazocal{R}'$ are  smooth $k_{p}$-algebras of relative dimension 1, together with a $k_{2}$-isomorphism $\chi:\pazocal{R} /(t^2) \to \pazocal{R}' /(t^2)$ of their reductions modulo $(t^2).$   We identify the spectra of $\pazocal{R}$ and $\pazocal{R}'$ through this isomorphism and assume that   $c$ is a closed point and $\eta$ is the generic point of this spectrum. Let us  assume further that $\mathfrak{c}$ and $\mathfrak{c}'$  are smooth liftings of $c$ to $\pazocal{R}$ and $\pazocal{R}'$ and that the reductions of $\mathfrak{c}$ and $\mathfrak{c}'$ modulo $(t^2)$ map to each other under $\chi.$    Finally, suppose that $\mathfrak{q} \in \Lambda ^{3}\pazocal{R}_{\eta} ^{\times}$ is $\mathfrak{c}$-good and $\mathfrak{q}' \in \Lambda ^{3}(\pazocal{R}'_{\eta} )^{\times}$ is   $\mathfrak{c}'$-good and that their reductions  modulo $(t^2)$ map to each other under $\chi_{\eta}.$  
 
 Similar to  the discussion following  (\ref{main-complexofsheaves-inf}), there is a unique isomorphism between the $k_{p}$-algebras $k(\mathfrak{c})$ and $k(c)_{p}$ which is the identity map modulo $(t).$ Using this, we identify $\Lambda^{2} k(\mathfrak{c})^{\times} $ with $\Lambda^{2}k(c)_{p}^{\times}.$ Combining this identification with the map $\ell^{(p)}: \Lambda^{2} k(c)_p^{\times} \to k(c) ,$ gives us a map  
$$
\ell ^{(p)}:\Lambda^{2} k(\mathfrak{c})^{\times} \to k(c),
$$
which we denote by the same symbol. 

\begin{corollary}\label{cor-res-form}
With the notation as above, we have 
$$
res_{c}\omega^{(p)}(\mathfrak{q},\mathfrak{q}')=\ell ^{(p)} (res_{\mathfrak{c}}(\mathfrak{q}))-\ell ^{(p)} (res_{\mathfrak{c}'}(\mathfrak{q}'))
$$
\end{corollary}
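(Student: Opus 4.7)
The plan is to reduce the corollary to Proposition \ref{prop-res-add} (which is an identity inside a single ring $k'((s))_p$) by transporting $\mathfrak{q}'$ back to $\pazocal{R}_\eta$ via a chosen $k_p$-algebra lift of $\chi_\eta$, completing at $c$, and invoking the Cohen structure theorem together with the smoothness of $\pazocal{R}/k_p$.

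By Lemma \ref{lemma-indep-reduced}, the quantity $res_c\,\omega^{(p)}(\mathfrak{q},\mathfrak{q}')$ may be computed using any $k_p$-algebra isomorphism $\tilde{\chi}_\eta\colon \pazocal{R}_\eta \xrightarrow{\sim}\pazocal{R}'_\eta$ lifting $\chi_\eta$; such a lift exists because both rings are smooth of relative dimension $1$ over $k_p$. Set $\mathfrak{q}'' := \tilde{\chi}_\eta^{-1}(\mathfrak{q}')$. Since $\tilde{\chi}_\eta$ reduces to $\chi_\eta$ modulo $(t^2)$ and $\chi_\eta(\mathfrak{q}|_{t^2}) = \mathfrak{q}'|_{t^2}$, we have $\mathfrak{q}''|_{t^2} = \mathfrak{q}|_{t^2}$. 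Moreover, $\tilde{\chi}_\eta^{-1}$ carries a uniformizer of $\mathfrak{c}'$ to one of a smooth lifting $\mathfrak{c}''$ of $c$ in $\pazocal{R}$ whose reduction modulo $(t^2)$ equals that of $\mathfrak{c}$; in particular, $\mathfrak{q}''$ is $\mathfrak{c}''$-good. Now complete $\pazocal{R}$ at $c$ and, using smoothness, identify the resulting ring with $k'((s))_p$, where $k'$ is the residue field at $c$. After rescaling, we may assume $\mathfrak{c}$ corresponds to $s=0$; by Lemma \ref{lemma-change-unif}, $\mathfrak{c}''$ corresponds to $s'=0$ for a uniformizer $s'$ with $s'|_{t^2}=s|_{t^2}$. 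Under this picture $\mathfrak{q}$ is $s$-good, $\mathfrak{q}''$ is $s'$-good, and $\mathfrak{q}''|_{t^2}=\mathfrak{q}|_{t^2}$. From the definition of $\omega^{(p)}_{\tilde{\chi}_\eta}$ and the formula $\tilde{\Omega}^{(p)} = \Omega^{(p)}\circ(\Lambda^3\pi_1 - \Lambda^3\pi_2)$, we obtain
\begin{equation*}
res_c\,\omega^{(p)}(\mathfrak{q},\mathfrak{q}') \;=\; res_{s=0}\,\Omega^{(p)}(\mathfrak{q} - \mathfrak{q}'').
\end{equation*}

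Proposition \ref{prop-res-add} applied to the pair $(\mathfrak{q},\mathfrak{q}'')$ inside $k'((s))_p$, with uniformizers $s$ and $s'$ respectively, yields
\begin{equation*}
\ell^{(p)}(res_{s'}(\mathfrak{q}'')) - \ell^{(p)}(res_s(\mathfrak{q})) \;=\; res_{s=0}\,\Omega^{(p)}(\mathfrak{q}'' - \mathfrak{q}).
\end{equation*}
Combining the two displays, it remains to see that $res_s(\mathfrak{q})$ matches $res_\mathfrak{c}(\mathfrak{q})$ and, through $\tilde{\chi}_\eta$, that $res_{s'}(\mathfrak{q}'')$ matches $res_{\mathfrak{c}'}(\mathfrak{q}')$, where the residue rings $k(\mathfrak{c})$ and $k(\mathfrak{c}')$ are both identified with $k(c)_p$ by the unique $k_p$-algebra isomorphism that is the identity modulo $(t)$ (this was the identification used to define $\ell^{(p)}$ on $\Lambda^2 k(\mathfrak{c})^\times$ and $\Lambda^2 k(\mathfrak{c}')^\times$). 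The main point requiring care is this last identification: it is forced by the uniqueness of such isomorphisms, combined with the fact that $\tilde{\chi}_\eta$ itself restricts to a $k_p$-algebra isomorphism $k(\mathfrak{c}'')\xrightarrow{\sim} k(\mathfrak{c}')$ which is the identity modulo $(t)$ (because $\tilde{\chi}_\eta$ lifts $\chi_\eta$ and hence is the identity modulo $(t)$ on the reduced closed fibre). Granting this bookkeeping, the corollary follows.
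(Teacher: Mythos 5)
Your argument is correct and follows the same route as the paper, which simply declares the corollary to be a restatement of Proposition \ref{prop-res-add}: you transport $\mathfrak{q}'$ back to $\pazocal{R}_{\eta}$ via a lift of $\chi_{\eta}$ (legitimate by Lemma \ref{lemma-indep-reduced}), complete at $c$, and apply Proposition \ref{prop-res-add} to the resulting pair in $k'((s))_{p}$, with the final identification of $\ell^{(p)}(res_{s'}(\mathfrak{q}''))$ with $\ell^{(p)}(res_{\mathfrak{c}'}(\mathfrak{q}'))$ handled by the canonical isomorphisms with $k(c)_{p}$. The only difference is that you supply the bookkeeping the paper leaves implicit; the signs and the choice of lift agreeing with $\chi$ modulo $(t^{2})$ all check out.
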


\begin{proof}
This is a restatement of  Proposition \ref{prop-res-add}.
\end{proof}

\section{An invariant of cycles in characteristic $p$}

We will use the above  constructions 
in order  to define the  Chow-Kontsevich dilogarithm and in turn use this dilogarithm to construct an infinitesimal invariant of one dimensional cycles in three dimensional space over a field of characteristic $p.$ 

\subsection{Proof of Theorem \ref {theorem-main1}} We separate the proof to two parts. In the first part we give the construction of the Chow-Kontsevich dilogarithm. In the second part, we compute this dilogarithm on the projective line. 

\subsubsection{Construction of the Chow-Kontsevich dilogarithm}\label{section-Construction of the Chow-Kontsevich dilogarithm} 
Suppose that $C$ is a smooth and proper curve over $k_{2}$ where $k$ is a field of characteristic $p >3.$ We fix a smooth lifting $\mathfrak{c}$ for each closed point $c$ of $C$ and we let $\mathcal{P}$ be the set of all these smooth liftings as in \textsection \ref{subsubsection-inf-chow}.   We saw in \textsection \ref{section-mod-in-char} that we have a map 
$$
\rho: \Lambda^{3}k(C,\mathcal{P})^{\times} \to k,
$$
which is essentially the same one that was constructed in \cite{un-inf} for the characteristic 0 case.

We will construct another such map 
$$
\rho_{K}: \Lambda ^{3}k(C,\mathcal{P})^{\times} \to k
$$
that is based on the constructions of this paper and has a distinctly  characteristic $p$ flavor.  We refer to \textsection \ref{subsubsection-inf-chow} for the details of the construction of $\rho$ and the notation.  We will  follow the same notation for the construction of $\rho_{K}.$

 Suppose that $f, g, h$ are in $k(C,\mathcal{P})^{\times}.$ Denote the triple of functions $(f,g,h)$ by $p.$ Suppose that we fix:

(i)   $\tilde{\pazocal{A}},$  a smooth $k_{\infty}$-algebra together with  an isomorphism 
$
\alpha: \tilde{\pazocal{A}}/(t^2)\xrightarrow{\sim} \pazocal{O}_{C,\eta},
 $
 
 (ii)  $\tilde{p}_{\eta},$  a triple of functions in $\tilde{\pazocal{A}},$ whose reductions modulo $(t^2)$ map to  $p_{\eta}$ via $\alpha.$ Here $p_{\eta}$ denotes the triple of functions, which are the images in $\pazocal{O}_{C,\eta}$ of the functions in $p.$  
 
 (iii) $\widetilde{\pazocal{B}}^{\circ}_{c},$   a smooth $k_{\infty}$-algebra together with an isomorphism $
\tilde{\gamma}_{c}: \widetilde{\pazocal{B}}^{\circ}_{c}/(t^2) \xrightarrow{\sim}  \hat{\pazocal{O}}_{C,c},
$ 
 for each $c \in |C|,$
 
 (iv) $\tilde{q}_{c},$ a triple of elements in  the localization of  $\widetilde{\pazocal{B}}^{\circ}_{c}$ at its minimal prime ideal (generic point),  which give a good lifting of the image of $p_{\eta}$ under the map $\tilde{\gamma}_{c,\eta}^{-1},$ for each  $c \in |C|.$   Here $\tilde{\gamma}_{c,\eta}^{-1} $ denotes the localization of the inverse of $\tilde{\gamma}_{c}$ at the generic point  $\eta.$

Let  $\alpha_c$ denote the  completion of  $\alpha$ at $c.$ The composition  $ \tilde{\gamma}_{c,\eta} ^{-1} \circ \alpha_c $ identifies  $\hat{\tilde{\pazocal{A}}}_{c}/(t^2)$  with  $\widetilde{\pazocal{B}}^{\circ}_{c,\eta}/(t^2).$ Denote the image of the triple of functions $\tilde{p} _{\eta}$ in the completion $\hat{\tilde{A}}_{c}$ at $c$ of $\tilde{\pazocal{A}}$ with the  symbol  $\tilde{p} _{\eta,c}.$ Similarly, denote 
the image of the triple of functions $\tilde{q}_{c}$ in the localization $\widetilde{\pazocal{B}}^{\circ}_{c,\eta}$ of  $\widetilde{\pazocal{B}}^{\circ}_{c}$ at its generic point, by $\tilde{q}_{c,\eta}.$ The reduction of $\tilde{p} _{\eta,c}$ modulo $(t^2)$ is mapped to the reduction of  $\tilde{q}_{c,\eta}$ modulo $(t^2)$ by the map $ \tilde{\gamma}_{c,\eta} ^{-1} \circ \alpha_c .$ In precisely this situation, we know that the residue $res_{c}  \omega^{(p)}(\tilde{p}_{\eta,c},\tilde{q}_{c,\eta} ,  \tilde{\gamma}_{c,\eta} ^{-1} \circ \alpha_c ))$ of $\omega^{(p)}$ at $c$ associated to $\tilde{p} _{\eta,c},$  $\tilde{q}_{c,\eta}$ and the map $\tilde{\gamma}_{c,\eta} ^{-1} \circ \alpha_c $  is well-defined by \textsection \ref{section definition omega}.

On the other hand, using the  notation   before  (\ref{ord-chow-dilog-eq}), $res_c(\tilde{q}_{c}) \in \Lambda ^{2}k(\tilde{\mathfrak{c}})^{\times}.$ Similar to the notation in Corollary \ref{prop-res-add}, by identifying the $k_{\infty}$-algebras $k(\tilde{\mathfrak{c}})$ and $k(c)_{\infty}$ with the unique map which is  identity  modulo $(t), $ we have a canonical map
$$
\ell ^{(p)}:\Lambda^{2} k(\tilde{\mathfrak{c}})^{\times} \to k(c).
$$
 Therefore, we can  define $\ell ^{(p)}(res_c(\tilde{q}_{c})) \in k(c).$

The value of $\rho_{K}$ on $p$ is defined by:

\begin{eqnarray}\label{main-expression}
\rho_{K}(p):= \sum_{c\in |C|}{\rm Tr}_k(\ell^{(p)}(res_c(\tilde{q}_{c}) )+ res_{c}  \omega^{(p)}(\tilde{p}_{\eta,c},\tilde{q}_{c,\eta} ,  \tilde{\gamma}_{c,\eta} ^{-1} \circ \alpha_c )).
\end{eqnarray}

\noindent We need to check that the above sum is finite and is independent of the auxiliary choices. 

\subsubsection{The finiteness of (\ref{main-expression})}

Let $S \subseteq |C|$ be a finite subset of the set of  closed points of $C,$ outside of which  all the functions in $p$ and $\tilde{p}_{\eta}$ are regular. Let $c \in |C| \setminus S.$ 

Since $p$ is regular at $c,$ so is $\tilde{q}_{c}$ and hence $res_c(\tilde{q}_{c})=0.$ Similarly, since  $\tilde{p}_{\eta}$ is regular at $c,$ $res_{c}(\tilde{p}_{\eta})=0.$  These imply that $\ell^{(p)}(res_c(\tilde{q}_{c}) )=\ell^{(p)}(res_c(\tilde{p}_{\eta}) )=0.$ From Corollary \ref{cor-res-form}, we also deduce that 
$$
res_{c}  \omega^{(p)}(\tilde{p}_{\eta,c},\tilde{q}_{c,\eta} ,  \tilde{\gamma}_{c,\eta} ^{-1} \circ \alpha_c )=\ell^{(p)}(res_c(\tilde{p}_{\eta}) )-
\ell^{(p)}(res_c(\tilde{q}_{c}) )=0.$$ 
This implies that the sum in (\ref{main-expression}) is in fact over $S$ and hence is finite.

\subsubsection{Independence of (\ref{main-expression}) from the choices} 

Fix a $c \in |C|.$ 
Let $\widetilde{\pazocal{B}}'^{\circ}_{c},$  be another smooth $k_{\infty}$-algebra together with an isomorphism $
\tilde{\gamma}'_{c}: \widetilde{\pazocal{B}}'^{\circ}_{c}/(t^2) \xrightarrow{\sim}  \hat{\pazocal{O}}_{C,c}
$ and $\tilde{q}'_{c} ,$ a triple of elements in  the localization of  $\widetilde{\pazocal{B}}'^{\circ}_{c}$ at its minimal prime ideal   which gives a good lifting of the image of $p_{\eta}$ under the inverse of the map $\tilde{\gamma}'_{c,\eta}.$

The difference of 
$$
\ell^{(p)}(res_c(\tilde{q}_{c}) )+ res_{c}  \omega^{(p)}(\tilde{p}_{\eta,c},\tilde{q}_{c,\eta} ,  \tilde{\gamma}_{c,\eta} ^{-1} \circ \alpha_c )
$$
and 
$$
\ell^{(p)}(res_c(\tilde{q}'_{c}) )+ res_{c}  \omega^{(p)}(\tilde{p}_{\eta,c},\tilde{q}'_{c,\eta} ,  (\tilde{\gamma}'_{c,\eta} )^{-1} \circ \alpha_c )
$$
is equal to 
$$
\ell^{(p)}(res_c(\tilde{q}_{c}) )-
\ell^{(p)}(res_c(\tilde{q}'_{c}) )
-res_{c}  \omega^{(p)}(\tilde{q}_{c,\eta},\tilde{q}'_{c,\eta} ,  (\tilde{\gamma}'_{c,\eta} )^{-1} \circ \tilde{\gamma}_{c,\eta}  )
$$
by Lemma \ref{lemma-sum-residue}. On the other hand, the last expression is 0 by Corollary \ref{cor-res-form}. This proves independence from the local choices. 

Next we prove  independence from the global choice.  Let $\tilde{\pazocal{A}}'$ be    a smooth $k_{\infty}$-algebra together with  an isomorphism 
$
\alpha': \tilde{\pazocal{A}}'/(t^2)\xrightarrow{\sim} \pazocal{O}_{C,\eta}
 $ and 
  $\tilde{p}_{\eta}',$  be a triple of functions in $\tilde{\pazocal{A}}',$ whose reductions modulo $(t^2)$ map to  $p_{\eta}$ via $\alpha'.$ Similar to above, the difference of  
  $$
\ell^{(p)}(res_c(\tilde{q}_{c}) )+ res_{c}  \omega^{(p)}(\tilde{p}_{\eta,c},\tilde{q}_{c,\eta} ,  \tilde{\gamma}_{c,\eta} ^{-1} \circ \alpha_c )
$$
and 
$$
\ell^{(p)}(res_c(\tilde{q}_{c}) )+ res_{c}  \omega^{(p)}(\tilde{p}'_{\eta,c},\tilde{q}_{c,\eta} ,  \tilde{\gamma}_{c,\eta} ^{-1} \circ \alpha_c ')
$$
is equal to 
$$
res_{c}  \omega^{(p)}(\tilde{p}_{\eta,c},\tilde{p}'_{\eta,c} ,  (\alpha' _{c})^{-1}\circ \alpha_c  )
$$
by Lemma \ref{lemma-sum-residue}. This implies that the difference between the expressions (\ref{main-expression}) corresponding to $\tilde{p}_{\eta}$ and $\tilde{p}'_{\eta}$ is equal to 
$$
\sum _{c \in |C|} {\rm Tr} _{k}(res_{c}  \omega^{(p)}(\tilde{p}_{\eta,c},\tilde{p}'_{\eta,c} ,  (\alpha' _{c})^{-1}\circ \alpha_c  )).
$$
Let us  choose an isomorphism $\beta: \tilde{\pazocal{A}} \to \tilde{\pazocal{A}}'$ whose reduction modulo $(t)$ is the same as the reduction of $(\alpha ')^{-1} \circ \alpha,$ and $\varphi: k(C)_{\infty} \to  \tilde{\pazocal{A}}_{\eta}$ is any $k$-algebra isomorphism. Let $\tilde{q}$ denote the inverse image of $\tilde{p}_{\eta}$ under $\varphi$ and $\tilde{q}'$ denote the inverse image of $\tilde{p}_{\eta} '$ under $\beta_{\eta}\circ\varphi.$ Then $\tilde{q}$ and $\tilde{q}'$ are triples of functions in $k(C)_{\infty}$ with the same reduction modulo $(t).$ 

By its definition, for any $c \in |C|,$  $res_{c}  \omega^{(p)}(\tilde{p}_{\eta,c},\tilde{p}'_{\eta,c} ,  (\alpha' _{c})^{-1}\circ \alpha_c  )$ is equal to the residue of  
$$
\Omega ^{(p)}(\tilde{q}-\tilde{q}') \in \Omega^{1} _{k(C)/k}
$$
at $c. $ Since the sum of the residues of a 1-form is 0, this finishes the proof of the independence.

 \subsubsection{Computation of the Chow-Kontsevich dilogrithm on the projective line} In this example, we assume that $k$ is algebraically closed. As $C$ we take the projective line $\mathbb{P}^1 _{k_2},$ with coordinate function $z.$ 
 We fix a set $\mathcal{P}$ of smooth liftings for each closed point on the projective line and let $f, g, $ and $h \in k(\mathbb{P}^1 _{k_2},\mathcal{P})^{\times}.$

 We assume that for $\infty,$ the standard lifting is in $\mathcal{P}.$  We can reduce to this case using the functoriality of the construction with respect to automorphisms of the projective line. Since we assume that $k$ is algebraically closed, we can write the reduction $\underline{f}$ of $f$ modulo $(t)$ as a product of linear terms in $z.$ For $x \in \mathbb{A}^{1}_k=k,$ let $\tilde{x} \in \mathcal{P}$  denote the unique lifting with reduction $x,$ and let $\nu_{x}(\underline{f})$ denote the valuation of $\underline{f}$ at $x.$ Then, using the assumption that $f$ is $\mathcal{P}$-good,  we can write 
 $$
f=\tilde{\lambda}\prod_{x \in k} (z-\tilde{x})^{\nu_{x}(\underline{f})},
$$
with $\tilde{\lambda} \in k_{2} ^{\times}.$  

Applying the same argument to $g$ and $h,$ and using the multi-linearity of $\rho_K,$ we reduce to the case of computing $\rho_{K}((z-\alpha)\wedge (z-\beta) \wedge (z-\gamma)),$ with $\alpha,\,  \beta,\, \gamma,\, \infty \in \mathcal{P}.$   Using functoriality with respect to the map that sends $z$ to $\frac{z-\beta}{\alpha -\beta}, $ we reduce to computing, 
$
\rho_{K}((1-z)\wedge z \wedge (z-\varepsilon))
$
with $0,\, 1,\, \infty,\,   \varepsilon \in \mathcal{P}'.$ Let us put, 
$\varepsilon=s+as(1-s)\cdot t \in k_{2} ^{\flat},$ with $s \in k^{\flat}, $ $a \in k, $ and   choose $\tilde{\varepsilon}=s+as(1-s)\cdot t \in k_{\infty} ^{\flat}$ as a lifting of $\varepsilon.$ 

Then, in order to do the computation, we may choose the global lifting $\mathbb{P}^{1} _{k_{\infty}}$ and the triple of functions 
 
 $$
 \tilde{p}:=(1-z)\wedge z \wedge (z-\tilde{\varepsilon})
 $$ as a global lift of the functions.  The only singularities of $\tilde{p}$ are at $0,1,\infty$ and $\tilde{\varepsilon}.$ Since $res_{\delta} \tilde{p}=0$ for $\delta \in \{0,1,\infty \},$ and 
 $res_{\tilde{\varepsilon}}=(1-\tilde{\varepsilon})\wedge \tilde{\varepsilon},$
 we find that 
 $$
 \rho_K((1-z)\wedge z \wedge (z-\varepsilon))=\ell ^{(p)}((1-\tilde{\varepsilon}) \wedge \tilde{\varepsilon})=\ell i_{2} ^{(p)}(\varepsilon)=a^{p}\cdot \pounds _{1}(s).
 $$

\subsection{An infinitesimal invariant of cycles in characteristic $p$}

We constructed an invariant of codimension two cycles in the  three dimensional space over a field of characteristic 0 in \cite[\textsection 4]{un-inf} which we denoted by $\rho_{f}.$  In characteristic $p\geq 5,$   this construction carries without any modification and we   denote this regulator by $\rho.$ On the other hand, in characteristic $p,$ there is another regulator which is based on  the Chow-Kontsevich dilogarithm and which we will denote by $\rho_{K}.$ This new regulator has no characteristic 0 analog.  

Since the definitions and the proofs are exactly analogous to those in characteristic 0, we will omit most of the details and refer the reader to \cite[\textsection 4]{un-inf}. We  start by  recalling the definitions of Bloch's higher Chow groups for a smooth scheme over a field and its version
for the truncated polynomial ring over a field from \cite[\textsection 4]{un-inf}.

\subsubsection{Bloch's  higher Chow groups}

 Let $\square_k:= \mathbb{P}^{1} _{k} \setminus \{ 1\}$ and $\square ^n _{k}$ the $n$-fold product of $\square_k$ with itself over $k, $ with the coordinate functions $y_1, \cdots, y_n.$ For  a smooth $k$-scheme $X,$ we let   $\square^n _{X} :=X \times_k \square_k ^n.$  A codimension one face of $\square^n _{X}$ is a divisor $F_{i} ^a$  of the form $y_{i}=a,$ for $1\leq i \leq n,$ and $a \in \{0,\infty \}.$ A face of $\square^n _{X}$ is either the whole scheme $\square^n _{X}$ or an arbitrary intersection of codimension one faces.

 Let $\underline{z}^q (X, n)$ be the free abelian group on the set of codimension $q,$ integral, closed subschemes $Z \subseteq  \square^n _{X}$ which are {\it admissable}, i.e.  which intersect each face properly on $\square^n _{X}.$ For each codimension one face $F_{i} ^a,$ and  irreducible $Z \in \underline{z} ^q (X, n)$, we let $\partial_i ^{a} (Z)$  be the cycle associated to the scheme $Z \cap F_{i} ^{a}.$ We let $\partial:= \sum_{i=1} ^n (-1)^n (\partial_i ^{\infty} - \partial_i ^0)$ on $\underline{z}^q (X, n)$.

 Let  $\underline{z}^q (X, n)_{\rm degn}$ denote the subgroup of  degenerate cycles and 
 $z^q (X, \cdot):= \underline{z}^q (X, \cdot)/ \underline{z}^q (X, \cdot)_{\rm degn}$ the corresponding non-degenerate complex. The complex $(z^q (X, \cdot), \partial)$  is called the \emph{higher Chow complex} of $X$ and its homology ${\rm CH}^q (X, n):= {\rm H}_n (z^q (X, \cdot))$ is the higher Chow group of $X$.

\subsubsection{Cycles over $k_{\infty}$}  

 Let $\overline{\square}_{k}:= \mathbb{P}^{1} _{k},$  $\overline{\square}_{k} ^{n},$ the $n$-fold product of  $\overline{\square}_{k} $ with itself over $k,$ and  $\overline{\square}_{k_{\infty}} ^{n} :=\overline{\square}_{k} ^{n} \times _k k_{\infty}.$ We define a subcomplex $\underline{z}^q _{f} (k_{\infty}, \cdot) \subseteq \underline{z}^q (k_{\infty}, \cdot)$, as  the subgroup generated by integral, closed subschemes $Z \subseteq \square_{k_{\infty}} ^n$ which are admissible and have the property that  $\overline{Z}$  intersects each $s\times \overline{F}$ properly on $\overline{\square}_{k_{\infty}} ^n,$  for  every face $F$ of $\square^n_{k}.$ Here $s$ denotes the closed point of ${\rm Spec}\, k_{\infty},$ and $\overline{Z}$ ( resp. $\overline{F}$) the closure of $Z$ (resp. $F$) in $\overline{\square}_{k_{\infty}} ^n.$    We refer to  such cycles as cycles having finite reduction.   Modding out by degenerate cycles, we  have a complex $z^q_{f} (k_{\infty}, \cdot).$  We expect that this complex to compute the motivic cohomology of $k_{\infty}$ with coefficients in $\mathbb{Z}(q).$

\subsubsection{Definition of the invariant} Let $\eta$ denote the generic point of ${\rm Spec} \, k_{\infty}.$
An irreducible cycle $q$ in $ \underline{z}_{f} ^2 (k_{\infty},2)$ is  determined by its generic   point $q_{\eta} $ of  $\square ^{2} _{\eta},$ such that its   closure $\overline{q}$  in $\overline{\square} ^{2} _{k_{\infty}}$ does not meet $(\{ 0,\infty\} \times \overline{\square}_{k_{\infty}}) \cup ( \overline{\square}_{k_{\infty}} \times \{0, \infty \}).$ Let $\tilde{q}$ denote the normalisation of $\overline{q} $ and $|\tilde{q}_s|$ denote the underlying set of  the  closed fiber $\tilde{q} \times_{k_{\infty}}s$ of $\tilde{q}.$ Let  $\pi: \tilde{q} \to (\overline{\square}  _{k_{\infty}} \setminus \{0,\infty \})^{2}$ denote the composition of the normalization map from $\tilde{q}$ to $\overline{q}$ and the inclusion of $\overline{q}$  to $(\overline{\square}  _{k_{\infty}} \setminus \{0,\infty \})^{2}.$

For $r \in |\tilde{q}_s|,$ let $k(r)$ denote the residue field of $r.$  We have  a surjection $\hat{\pazocal{O}}_{\tilde{q},r} \to k(r).$ Since $k(r)/k$ is finite \'{e}tale there is a unique splitting $k(r)\to \hat{\pazocal{O}}_{\tilde{q},r}$ of the above surjection, and a unique isomorphism $k(r)_{\infty} \xrightarrow{\sim} \hat{\pazocal{O}}_{\tilde{q},r}$ of $k_{\infty}$-algebras which extend this splitting. 

We let $\pi_{r}$ denote the composition of the isomorphism ${\rm Spec} \, k(r)_{\infty} \xrightarrow{\sim} {\rm Spec} \, \hat{\pazocal{O}}_{\tilde{q},r},$ the natural map ${\rm Spec} \, \hat{\pazocal{O}}_{\tilde{q},r} \to \tilde{q} $ and the map $\pi.$ If $y$ is an invertible function on $(\overline{\square}  _{k_{\infty}} \setminus \{0,\infty \})^{2},$ we let 
$$
\ell _{r,i}(y):= \ell _{i} ( \pi_{r} ^{*}(y)), 
$$
for $1\leq i <p.$ Similarly, we let 
$$
\ell(r):=(\ell_{r,2}\wedge \ell_{r,1})(y_{1}\wedge y_2)
$$
and 
$$
\ell ^{(p)}(r):=\frac{1}{2}\sum _{1\leq i <p}i(\ell_{r,p-i}\wedge \ell_{r,i})(y_{1}\wedge y_2),
$$
where $y_{1}$ and $y_{2}$ are the coordinate functions on $(\overline{\square}  _{k_{\infty}} \setminus \{0,\infty \})^{2}.$

We define the values of $\ell $ and $\ell ^{(p)}$ on $q$ by: 
\begin{eqnarray}\label{defnl} 
\;\; \; \ell(q):=\sum _{r \in |\tilde{q}_s|}{\rm Tr}_{k} (\ell (r))
\end{eqnarray}
and 
\begin{eqnarray}\label{defnlp} 
\;\; \; \ell^{(p)}(q):=\sum _{r \in |\tilde{q}_s|}{\rm Tr}_{k} (\ell ^{(p)} (r)).
\end{eqnarray}

%{\color{blue} Note that the multiplicity appears implicitly when the form is pulled back to the normalisation so it does not appear explicitly in the formula above. It would be a good idea to check that the commutativity of the residue  holds with this definition, i.e. when we pass to the normalisation. This is probably what is done in the classical case. }  

\begin{definition}
We   define the regulators
$
\rho$ and  $\rho^{(p)}$ as maps from   $\underline{z}_{f} ^2 (k_{\infty},3)$ to $k$
as the compositions of $\ell$ and $\ell^{(p)}$ with the boundary map: 
$$
\rho:=\ell \circ \partial\;\;\;\;\;\;{\rm and}\;\;\;\;\;\; \rho_K:=\ell^{(p)} \circ \partial.
$$

\end{definition}

 Exactly as in \cite{un-inf}, we see that $\rho$ and $\rho_K$ vanish on boundaries. In other words, the compositions $\rho\circ\partial$ and $\rho_K\circ\partial$ from $\underline{z}^{2} _{f} (k_{\infty},4)$ to $k $ are both 0.

\subsubsection{Modulus property}\label{modulus section} The most important property of the regulator maps $\rho$ and $\rho_{K}$ states, in essence, that these regulators depend only on the reduction of the cycle modulo $(t^2).$ In order to state this property precisely, we will need the following definition.

\begin{definition}
 Suppose that $Z_i \in  \underline{z}^{2} _{f} (k_{\infty},3),$ for $i=1,\,2,$ are irreducible cycles.  We say that $Z_{1}$ and $Z_{2}$ are {\it equivalent modulo $t^m$} if the following condition, which we denote by $(M_{m}),$ holds:
 
 (i) the closure $\overline{Z}_{i}$ of $Z_{i}$ is smooth over $k_{\infty}$ and $(\overline{Z}_i)_{s} \cup (\cup_{j,a} |\partial _j ^{a} Z_i|) $ is a strict normal crossings divisor on $\overline{Z}_i,$ for $i=1, \, 2$

 (ii) $\overline{Z}_{1}|_{t^m}=\overline{Z}_{2} |_{t^m}.$ 
 \end{definition}
 
\begin{remark} 
It would make sense to define the group $ \underline{z}^{2} _{f} (k_{m},3)$ as the quotient of  $\underline{z}^{2} _{f} (k_{\infty},3)$ by the group generated by $Z_{1}-Z_{2},$ for all  $Z_{1}$ and $Z_{2}$  which are equivalent modulo $t^m.$ The next theorem then states that $\rho$ and $\rho_{K}$ induce maps from $\underline{z}^{2} _{f} (k_{2},3)$ to $k.$ We expect that in the future a complex computing the weight two motivic cohomology of $k_2$ will be defined where  $\underline{z}^{2} _{f} (k_{2},3)$ is the degree one term of this complex. We expect $\rho \oplus \rho_K$ to induce an isomorphism from the first cohomology of this complex to $k \oplus k.$ Note that the main result of \cite{un-ao} can be thought of as a {\it linear} version of this statement. Namely, the Bloch complex is a {\it linear} algebraic complex which computes motivic cohomology of weight two and the map $\rho \oplus \rho_K$ restricted to linear cycles as in \textsection  7.1.4 coincides with the map on the Bloch complex, which gives an isomorphism in the cohomology of degree one. 
\end{remark} 

The main result of this section is the following:

\begin{theorem}\label{modulus theorem}
Suppose that  $Z_{i} \in \underline{z} _{f} ^{2}(k_{\infty},3),$ for $i=1,2,$ are two irreducible cycles  which are equivalent modulo $t^2.$ Then we have 
$$\rho (Z_{1})=\rho(Z_{2})$$ 

and 

$$\rho_{K} (Z_{1})=\rho_{K}(Z_{2}).$$ 

\end{theorem}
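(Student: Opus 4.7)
The plan is to identify, for each irreducible cycle $Z \in \underline{z}^2_f(k_\infty,3)$ satisfying condition $(M_2)$, the value $\rho_K(Z)$ with the Chow-Kontsevich dilogarithm of Theorem \ref{theorem-main1} applied to the triple of coordinate functions $(y_1, y_2, y_3)$ restricted to $C := \overline{Z}|_{t^2}$. Once this is established, the theorem follows immediately: since $\overline{Z}_1|_{t^2}=\overline{Z}_2|_{t^2}$ and the $y_i$ reduce to the same functions on $C$ modulo $(t^2)$, both $\rho_K(Z_1)$ and $\rho_K(Z_2)$ compute the same value $\rho_K(y_1 \wedge y_2 \wedge y_3)$ in $\Lambda^3 k(C,\mathcal{P})^\times \to k$. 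Replacing $\ell^{(p)}$ by $\ell$ and $\Omega^{(p)}$ by $\Omega$ throughout yields the statement for $\rho$ by the same scheme.

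First, I would exploit $(M_2)(i)$: $\overline{Z}$ is smooth over $k_\infty$, and the union of $(\overline{Z})_s$ with $\bigcup_{j,a}|\partial_j^a Z|$ has strict normal crossings. At each closed point $c$ of $C$ where some $y_j$ has a zero or pole, the corresponding component of $\bigcup_{j,a}|\partial_j^a Z|$ is a smooth subscheme of $\overline{Z}$ supported on $c$, and hence provides a natural smooth lift $\mathfrak{c}$. Augmenting with arbitrary smooth lifts at the finitely many remaining points yields a set $\mathcal{P}$ such that $y_1, y_2, y_3 \in k(C,\mathcal{P})^\times$. In addition, $\overline{Z}\times_{k_\infty} k_3$ is a canonical global $k_3$-lifting of $C$, and its completions at closed points supply compatible local $k_3$-liftings.

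Next, I would plug these canonical choices into the construction (\ref{main-expression}) of $\rho_K$: take $\tilde{\pazocal{A}}$ to be the generic stalk of $\overline{Z}/k_3$, take $\widetilde{\pazocal{B}}^{\circ}_c$ to be its completion at $c$, and take $\tilde p_\eta$ and $\tilde q_c$ to be the tautological lifts of $(y_1, y_2, y_3)$. With these choices the identifications $\alpha$ and $\tilde\gamma_c$ are canonical, so $\tilde\gamma_{c,\eta}^{-1}\circ\alpha_c$ is the identity map, and the pairs of liftings plugged into $\omega^{(p)}$ coincide. By Lemma \ref{lemma-sum-residue} (with $\psi=\phi^{-1}$) and the direct definition of $\Omega^{(p)}$ applied to trivial differences, each $\omega^{(p)}$ term in (\ref{main-expression}) vanishes, and (\ref{main-expression}) collapses to
\[
\rho_K(y_1\wedge y_2 \wedge y_3) = \sum_{c\in |C|}\mathrm{Tr}_k\bigl(\ell^{(p)}(\mathrm{res}_c(\tilde{q}_c))\bigr).
\]
It remains to match this sum with $\rho_K(Z)=\ell^{(p)}(\partial Z)$ as defined by (\ref{defnlp}). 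The points $r \in |\tilde q_s|$ of a boundary component $q=\partial_j^a Z$ are exactly the closed points of $(\overline{Z})_s$ at which $y_j\in\{0,\infty\}$; the unique splitting $k(r)_\infty \xrightarrow{\sim} \hat{\pazocal{O}}_{\tilde q, r}$ identifies $\ell^{(p)}(r)=\ell^{(p)}(y_k \wedge y_l)$ (with $\{j,k,l\}=\{1,2,3\}$) with the corresponding wedge-component of $\ell^{(p)}(\mathrm{res}_c(\tilde q_c))$; summing over $j, a$ with the alternating signs prescribed by $\partial$ recovers the full term indexed by $c$. Hence $\rho_K(Z)=\rho_K(y_1\wedge y_2\wedge y_3)$ on $C$, and the theorem follows.

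The main obstacle is the verification in this last step that the cycle-theoretic bookkeeping in (\ref{defnlp}) matches the residue sum in (\ref{main-expression}) term by term, together with the vanishing of the $\omega^{(p)}$-correction in the canonical-lifting case. This is the characteristic-$p$ analogue of the good-lifting identity (\ref{exp1-rho}) used for $\rho$, and its proof relies in a crucial way on Proposition \ref{invariance-prop} and Corollary \ref{cor-res-form}, which were proved precisely to handle the substitution of $\Omega$ by $\Omega^{(p)}$ in the argument of \cite[\textsection 4]{un-inf}. Once these are in place, the rest of the argument is parallel to the characteristic $0$ case and requires no new ideas.
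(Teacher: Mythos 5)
Your proposal is correct and takes essentially the same route as the paper: both compute $\rho_K(y_1\wedge y_2\wedge y_3)$ on the common reduction $C=\overline{Z}_i|_{t^2}$ by feeding the global good lifting $\overline{Z}_i$ into (\ref{main-expression}) as both the generic and the local lifts, so the $\omega^{(p)}$ defect terms vanish and the sum collapses to $(\ell^{(p)}\circ\partial)(Z_i)=\rho_K(Z_i)$, after which the independence of all choices established in Theorem \ref{theorem-main1} forces $\rho_K(Z_1)=\rho_K(Z_2)$ (and likewise for $\rho$). Your extra remarks on extracting the smooth liftings $\mathcal{P}$ from the strict normal crossings condition and on matching (\ref{defnlp}) with the residue sum are just elaborations of steps the paper leaves implicit.
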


\begin{proof} 
The proofs of the statements for $\rho$ and $\rho_K$ are essentially the same. Therefore, we will only expound the one for $\rho_K.$ 

Suppose that $Z_{1}$ and $Z_{2}$ are as in the statement. Let us put $\overline{Z}$ to be the common reduction of $\overline{Z}_{i}$ modulo $(t^2).$ Then $\overline{Z}$ is a smooth and projective curve over $k_2$ and $\overline{Z}_{i}$ are its two different liftings. We will denote the restriction of the coordinate function $y_{j}$ to $\overline{Z}$ by $y_{j,\overline{Z}},$ similarly its restriction to $\overline{Z}_{i}$ by $y_{j,\overline{Z}_i}.$   

Let us compute $\rho_{K}(y_{1,\overline{Z}}\wedge y_{2,\overline{Z}} \wedge y_{3,\overline{Z}})$ by using the lifting $\overline{Z}_{i}. $ Note that we proved in the main theorem that the value of $\rho_{K}(y_{1,\overline{Z}}\wedge y_{2,\overline{Z}} \wedge y_{3,\overline{Z}})$ is independent of the choice of the lifting.      

Since $\overline{Z}_i$ and the functions $y_{j,\overline{Z}_{i}}$ are global good  liftings, we can choose $y_{1,\overline{Z}_i}\wedge y_{2,\overline{Z}_i} \wedge y_{3,\overline{Z}_i}$ as both $\tilde{p}_{\eta}$ and $\tilde{q}_{c}$ for each $z \in |\overline{Z}|$ as in the construction in  \textsection \ref{section-Construction of the Chow-Kontsevich dilogarithm}. This would make the defect term  
 $$
res_{z}  \omega^{(p)}(\tilde{p}_{\eta,z},\tilde{q}_{z,\eta} ,  \tilde{\gamma}_{z,\eta} ^{-1} \circ \alpha_z )
 $$
equal to  0 in (\ref{main-expression}) and the expression reduces to 
$$
\rho_{K}(y_{1,\overline{Z}}\wedge y_{2,\overline{Z}} \wedge y_{3,\overline{Z}})=\sum_{z\in |\overline{Z}|}{\rm Tr}_k(\ell^{(p)}(res_z( y_{1,\overline{Z}_i}\wedge y_{2,\overline{Z}_i} \wedge y_{3,\overline{Z}_i}) )=(\ell ^{(p)}\circ \partial)(Z_{i})=\rho_{K}(Z_{i}).
$$
Since the left hand side does not depend on the choice of $Z_{i},$ we obtain $\rho_{K}(Z_{1})=\rho_{K}(Z_{2}).$ 
\end{proof}

%\subsubsection{Vanishing on the products}  Suppose that $Z$ is an irreducible cycle in $\underline{z}_{f} ^2 (S,3)$ which satisfies condition (i) in \textsection \ref{modulus section}.

%\begin{proposition}\label{products} If there is $1\leq i\leq 3$ such that $y_{i} $ restricted to $Z_{2}:=Z|_{t^2}$ is in $k_2 ^{\times}$ then $\rho_f(Z)=0.$ \end{proposition}

%\begin{proof} Without loss of generality assume that $i=1,$ and $y_{1}$ restricted to $Z|_{2}$ is $\alpha \in k_2 ^\times.$ By exactly as in the proof of Theorem \ref{modulus theorem} we see that $$\rho_f(Z)=\rho((y_{1}\wedge y_2 \wedge y_3)|_{Z_2})=\rho(\alpha \wedge( y_2 \wedge y_3)|_{Z_2}).$$On the other hand, the right hand side is 0 by Proposition \ref{rho0}.\end{proof}

{\bf Acknowledgement.} The author thanks the referee for a very careful reading and many suggestions which improved the paper.


\begin{thebibliography}{99}

%\bibitem{besser} A. Besser. {\it Finite and p-adic polylogarithms.} Compositio Math. 130 (2002), no. 2, 215–223.

\bibitem{b-e} S. Bloch, H. Esnault.{ \it  The additive dilogarithm.} Doc. Math. (2003).  Extra volume in honor of Kazuya Kato's fiftieth birthday, 131-155.


\bibitem{cat-hom} J.-L. Cathelineau. {\it Sur l'homologie de $SL_2$ \`a coefficients dans l'action adjointe.} Math. Scand. 63 (1988), 51-86.

\bibitem{cat2011} J.-L. Cathelineau. {\it  Infinitesimal dilogarithms, extensions and cohomology.} J. of Algebra 332 (2011), 87-113.


\bibitem{ev-g} P. Elbaz-Vincent, H. Gangl. {\it On poly(ana)logs. I.}  Compos. Math. 130 (2) (2002) 161-210.

\bibitem{geom}  A. Goncharov. {\it Geometry of configurations, polylogarithms, and motivic cohomology.}  Adv. Math. 114:2 (1995), 197–318.


\bibitem{vol} A. Goncharov. {\it Volumes of hyperbolic manifolds and mixed Tate motives.} J. Amer. Math. Soc. 12  (1999), no. 2, 569-618.

\bibitem{euc} A. Goncharov. {\it Euclidean scissor congruence groups and mixed Tate motives over dual numbers.} Math. Res. Letters 11 (2004), 771-784.  

\bibitem{arak} A. Goncharov. {\it Polylogarithms, regulators, and Arakelov motivic complexes.} J. Amer. Math. Soc. 18 (2005) no. 1, 1-60.  

%\bibitem{hess} L. Hesselholt. {\it $K$-theory of truncated polynomial algebras.  }  Handbook of K-theory. Vol 1 (2005), Springer,  71-110.

\bibitem{kont} M. Kontsevich. {\it The $1\frac{1}{2}$
-logarithm.} Appendix to “On poly(ana)logs. I” by P. Elbaz-Vincent and H. Gangl, see \cite{ev-g}.

\bibitem{mat} H. Matsumura. {\it Commutative Ring Theory.} Camb. Stud. Adv. Math. no. 8, Cambridge University Press (1986).

%\bibitem{rulling} K. R\"{u}lling. {\it The generalized de Rham-Witt complex over a field is a complex of zero cycles.} J. Algebraic Geom.  16 (2007) 109-169.

\bibitem{sus} A. A. Suslin. {\it Reciprocity laws and the stable rank of rings of polynomials.} 
Izv. Akad. Nauk SSSR Ser. Mat. 43 (1979), no. 6, 1394-1429. 



\bibitem{un-add} S. \"{U}nver. {\it On the additive dilogarithm.} Algebra \& Number Theory 3:1 (2009), 1-34.


%\bibitem{unv2} S. \"{U}nver. {\it Additive polylogarithms and their functional equations.} Math. Ann. 348 (2010), no. 4, 833-858.

\bibitem{un-ao} S. \"{U}nver.  {\it Deformations of Bloch groups and Aomoto dilogarithms in characteristic p.}
J. Number Theory 131 (2011), no. 8, 1530-1546. 

\bibitem{un-inf} S. \"{U}nver. {\it Infinitesimal Chow dilogarithm.} J. Algebraic Geom. 30 (2021), 529–571.

%\bibitem{un-reg}  S. \"{U}nver.  {\it Infinitesimal Bloch regulator.} J. Algebra 559 (2020), 203-225.

\bibitem{un-higher}  S. \"{U}nver.  {\it Infinitesimal dilogarithm   on curves over truncated polynomial rings.} Algebra \& Number Theory 18:4 (2024), 685–734.









%\bibitem{blo} S. Bloch. {\it Higher regulators, algebraic K-theory, and zeta functions of elliptic curves.} Lecture notes, U.C. Irvine, (1977). 



%\bibitem{bloch} S. Bloch. {\sl Algebraic cycles and higher $K$-theory\/}, Adv. Math., 61, (1986), no. 3, 267--304.




%\bibitem{add-chow} S. Bloch, H. Esnault. {\it An additive version of higher Chow groups.} Ann. Sci. École Norm. Sup. (4) 36 (2003), no. 3, 463-477.

 %\bibitem{lambda} J. L. Cathelineau. {\it $\lambda$-structures in algebraic $K$-theory and cyclic homology. } $K$-Theory 4 (1991) no. 6, 591-606.



%\bibitem{config} A. Goncharov.{\it  Geometry of configurations,polylogarithms, and motivic cohomology.} Advances in Math. 114 (1995), 197-318.

%\bibitem{arak} A. Goncharov. {\it Polylogarithms, regulators, and Arakelov motivic complexes.} J. Amer. Math. Soc. 18 (2005) no. 1, 1-60.  

%\bibitem{good} T. G. Goodwillie. {\it Relative algebraic K-theory and cyclic homology.} Ann. of  Math. (2) 124:2 (1986), 347-402.

%\bibitem{gra} J. Graham. {\it Continuous symbols on fields of formal power series.} Algebraic K-theory vol. 2. Lecture Notes in Math.  v. 342, Springer,  474-486. 


%\bibitem{hess} L. Hesselholt. {\it $K$-theory of truncated polynomial algebras.  }  Handbook of K-theory. Vol 1 (2005), Springer,  71-110.

%\bibitem{lev} M. Levine.{\it Mixed motives.}  Mathematical Surveys and Monographs, 57. American Mathematical Society, Providence, RI, (1998). 


 %\bibitem{lod} J.-L. Loday.{ \it Cyclic homology.} Grund. der math. Wissen. 301, Springer, (1992).



%\bibitem{ns} Y. P. Nesterenko, A. A. Suslin.{\it Homology of the general linear group over a local ring, and Milnor's K-theory.} (Russian)  Izv. Akad. Nauk SSSR Ser. Mat. 53 (1989), no. 1, 121-146; translation in Math. USSR-Izv. 34 (1990), no. 1, 121-145.

%\bibitem{park} J. Park. {\sl Regulators on additive higher Chow groups\/}, Amer. J. Math., 131, (2009), no. 1, 257-276.


%\bibitem{beil} M. Rapoport, N. Schappacher, P. Schneider (eds.). {\it Beilinson’s Conjectures on Special Valuesof L-functions}.%Perspectives in Mathematics, vol. 4, Academic Press, Boston, (1988).

%\bibitem{rud} D. Rudenko. {\it Scissor congruence and Suslin reciprocity law.} Preprint. 	arXiv:1511.00520

%\bibitem{sus} A. Suslin. {\it $K_{3}$ of a field and the Bloch group.} Proc. of the Steklov Inst. of Math. 4 (1991), 217-239.


%\bibitem{unv1.5} S. \"{U}nver.  {\it Additive polylogarithms and their functional equations.} Math. Ann. 348 (2010)  no. 4, 833-858.





%\bibitem{unv4} S. \"{U}nver. {\it Infinitesimal Bloch regulator.} Journal of Algebra 559 (2020), 203-225.

%\bibitem{voe} V. Voevodsky. {\it  Triangulated categories of motives over a field.} Cycles, transfers, and motivic homology theories, V. Voevodsky, A. Suslin, E. Friedlander,  188-238, Ann. of Math. Stud., 143, Princeton Univ. Press, Princeton, NJ, (2000).


\end{thebibliography}
\end{document}